\documentclass[]{amsart}
\usepackage{amsfonts}
\usepackage{amsmath}
\usepackage{amssymb}
\usepackage{mathrsfs}  
\usepackage{mathtools}
\usepackage{graphicx, float}
\usepackage[hmargin=2.8cm,vmargin=2.3cm]{geometry}
\usepackage{enumitem}
\usepackage[dvipsnames]{xcolor}

\usepackage{tikz-cd}
\usepackage{siunitx}

\graphicspath{{Figures/}}

\newtheorem{theorem}{Theorem}
\newtheorem{proposition}[theorem]{Proposition}

\newtheorem{corollary}[theorem]{Corollary}
\newtheorem{lemma}[theorem]{Lemma}

\theoremstyle{definition}
\newtheorem{definition}[theorem]{Definition}
\newtheorem{example}[theorem]{Example}

\newtheorem{remark}[theorem]{Remark} 

\numberwithin{equation}{section}
\numberwithin{theorem}{section}

\newenvironment{introtheorem}[1]
  {\begingroup\renewcommand{\thetheorem}{#1}\begin{theorem}}
  {\end{theorem}\endgroup\addtocounter{theorem}{-1}}

\DeclareMathOperator{\Id}{Id}
\DeclareMathOperator{\id}{id}

\DeclareMathOperator{\grad}{grad}
\DeclareMathOperator{\sgrad}{sgrad}
\DeclareMathOperator{\ad}{ad}
\DeclareMathOperator{\Ad}{Ad}

\DeclareMathOperator{\SO}{SO}

\DeclareMathOperator{\GL}{GL}
\DeclareMathOperator{\PSL}{PSL_2(\mathbb R)}
\DeclareMathOperator{\RP}{\mathbb{RP}^1}
\DeclareMathOperator{\End}{End}
\DeclareMathOperator{\tr}{tr}
\DeclareMathOperator{\Hom}{Hom}
\DeclareMathOperator{\Skew}{skew}

\DeclareMathOperator{\so}{\mathfrak{so}}
\DeclareMathOperator{\spl}{\mathfrak{sl}_2(\mathbb R)}
\DeclareMathOperator{\Der}{\mathfrak{D}}
\DeclareMathOperator{\bor}{\mathfrak{b}}

\DeclareMathOperator{\src}{src}
\DeclareMathOperator{\trg}{trg}

\DeclareMathOperator{\Ker}{Ker}
\DeclareMathOperator{\Diff}{Diff}
\DeclareMathOperator{\Dens}{Dens}
\DeclareMathOperator{\SDiff}{SDiff}

\DeclareMathOperator{\SVect}{SVect}

\DeclareMathOperator{\Vect}{Vect}

\DeclareMathOperator{\rank}{rank}

\newcommand{\torus}{\mathbb{T}}
\newcommand{\VecttorC}{\Vecttor_{\mbb C}}
\newcommand{\SVecttor}{\SVect(\torus)}

\newcommand{\Vecttor}{\Vect(\torus)}
\newcommand{\SDifftor}{\SDiff(\torus)}
\newcommand{\Difftor}{\Diff(\torus)}
\newcommand{\Denstor}{\Dens(\torus)}
\newcommand{\Base}{B}
\newcommand{\Groid}{\mathcal{G}} 
\newcommand{\Aroid}{\mathcal{A}}
\newcommand{\anchor}{\#}
\newcommand{\vertbundle}{\Ker\anchor}
\newcommand{\p}{\partial}

\newcommand{\mf}{\mathfrak}
\newcommand{\mc}{\mathcal}
\newcommand{\mbb}{\mathbb}
\newcommand{\mbf}{\mathbf}
\newcommand{\norm}[1]{\left|#1\right|}
\newcommand{\ip}[2]{\langle #1 , #2 \rangle}
\newcommand{\set}[1]{\left\{#1 \right\}}

\newcommand{\ChEnd}{\boldsymbol{\Gamma}}
\newcommand{\II}{\mathrm{II}}
\newcommand{\madj}{\dagger}
\newcommand{\Lie}{\mathcal{L}}
\newcommand{\trans}{\intercal}
\newcommand{
\RSLA}{Riemannian submersion Lie algebroid}
\newcommand{\ecc}{\varepsilon}
\newcommand{\curv}{\mathcal{R}}

\title{Curvature Decomposition for Riemannian Lie Algebroids}
\author{
René Langøen 
}

\address{R.L.: Department of Mathematics, University of Bergen, Norway.}
\email{rene.langoen@uib.no}

\usepackage[pdfencoding=auto, psdextra, hypertexnames=false]{hyperref}
\hypersetup{
    colorlinks=false,       
    linkcolor=red,          
    citecolor=green,        
    filecolor=red,      
    urlcolor=blue           
}
\usepackage[ ]{amsrefs}
\begin{document}

\begin{abstract}
Classical curvature formulas for Lie groups and Riemannian submersions are recast in terms of Lie groupoids with right-invariant source-fibre metrics and Riemannian Lie algebroids. We derive a sectional-curvature formula for such groupoids extending the Arnold-Milnor ``1-2-3-4'' formula. For transitive Riemannian Lie algebroids, we factor the base and vertical Levi-Civita connections through Lie algebroids of metric derivations and obtain Lie-algebroid analogues of O'Neill's curvature formulas. Applied to the action of the diffeomorphism group of the torus on densities, these formulas recover Arnold's curvature of the measure-preserving diffeomorphism group and give explicit Wasserstein sectional curvatures at the uniform density. Finite-dimensional examples include the rotational action algebroid of the round sphere and the WGS84 ellipsoid, and a smooth family of negative curvatures on non-abelian isotropy groups over the real projective line.
\end{abstract}

\maketitle

\tableofcontents

\section{Introduction}

The aim of the present work is to revisit several classical curvature constructions---Riemannian geometry on Lie groups with invariant metrics, principal bundles with metric-compatible connections, and O'Neill's theory of Riemannian submersions---and to reinterpret these in the language of Lie algebroids.
We extend the Arnold-Milnor sectional-curvature formula to Lie groupoids
with right-invariant source-fibre metrics.
For the action of diffeomorphisms of the torus on densities, we recover
Arnold's curvature formula on the isotropy groups and obtain explicit
Wasserstein curvature formulas on the base from the same algebroid Levi-Civita connection.

A transitive Lie algebroid combines infinitesimal directions on the base manifold with directions coming from isotropy, through the short exact sequence
\[
\begin{tikzcd}
\vertbundle \ar[r, "\iota", hook] &
\Aroid \ar[r, "\anchor", two heads] &
T\Base.
\end{tikzcd}
\]
A metric on $\Aroid$ determines an orthogonal splitting of this sequence as vector bundles. Its curvature measures the failure of the horizontal lift to preserve the Lie algebroid bracket. More generally, we use curvature as the obstruction to an anchor-preserving map being a Lie algebroid morphism.

Atiyah's work on connections in principal bundles led to the exact sequence now associated with the Atiyah algebroid \cite{atiyah_complex_1957}; the groupoid and algebroid constructions used here follow Mackenzie \cites{mackenzie_lie_1987,mackenzie_general_2005}. Crainic and Fernandes established the integrability criteria for Lie algebroids \cites{crainic_integrability_2003,crainic_integrability_2004}. Fernandes' treatment of Lie algebroid connections \cite{fernandes_lie_2002} has been an important conceptual influence on the present work. We consider right-invariant metrics on source fibres, rather than the metric structures studied by del Hoyo and Fernandes in their theory of Riemannian groupoids \cites{del_hoyo_riemannian_2018,del_hoyo_riemannian_2019}.

Arnold's interpretation of hydrodynamics as geodesic motion on diffeomorphism groups \cite{arnold_sur_1966} and Milnor's study of invariant metrics \cite{milnor_curvatures_1976} provide the Lie-group curvature formulas used below. Izosimov and Khesin extended the hydrodynamical setting to groupoids with right-invariant source-fibre metrics, treating vortex sheets and multiphase fluids \cites{izosimov_vortex_2018,izosimov_geometry_2023}. Related constructions in optimal transport and geometric mechanics are discussed in \cite{modin_geometry_2017}.

Our principal application is the action of the diffeomorphism group of the torus on densities. Its isotropy groups consist of measure-preserving diffeomorphisms, while the base carries the Wasserstein metric. The vertical identity recovers Arnold's curvature formula. The horizontal identity gives explicit Fourier formulas for the curvature of the horizontal lift and the Wasserstein sectional curvatures at the uniform density, specialising the curvature calculations of Otto and Lott~\cites{otto_geometry_2001,lott_geometric_2007}. Both computations proceed from the same algebroid Levi-Civita connection.

A central result is a sectional-curvature formula for Lie groupoids endowed with right-invariant source-fibre metrics. It extends the classical Arnold-Milnor $1$-$2$-$3$-$4$ formula from Lie groups to Lie groupoids.
\begin{introtheorem}{\ref{th:2-1-2-3-4}}
The sectional curvature of a Lie groupoid $\Groid$ with a right-invariant source-fibre metric, in the direction determined by an orthonormal pair $\tilde X,\tilde Y\in T_g^{\src}\Groid$, is
\[
C^{\Groid}(\tilde X,\tilde Y)
=2\bigl(\anchor X\cdot\ip{\alpha}{Y}-\anchor Y\cdot\ip{\alpha}{X}\bigr)
+\ip{\delta}{\delta}+2\ip{\alpha}{\beta}-3\ip{\alpha}{\alpha}-4\ip{B_X}{B_Y},
\]
where $X,Y$ are local orthonormal sections of $\Aroid$ near $y=\trg(g)$, extending $X_y=dR_{g^{-1}}\tilde X$ and $Y_y=dR_{g^{-1}}\tilde Y$, and
\[
2\alpha=[X,Y]_{\Aroid}=\ad_XY,\qquad
2\delta=\ad^\madj_XY+\ad^\madj_YX,
\]
\[
2\beta=\ad^\madj_XY-\ad^\madj_YX,\qquad
2B_X=\ad^\madj_XX,\qquad 2B_Y=\ad^\madj_YY.
\]
The right-hand side is evaluated at $y$ and is independent of the chosen local orthonormal extensions.
\end{introtheorem}
Compared with the Lie-group formula, the expression contains additional derivatives of the bracket coefficients along the anchor. These terms account for the variation of the algebroid structure over the base and vanish when the bracket coefficients in the chosen orthonormal frame are constant. The formula thus places Lie groups, principal bundles, Riemannian manifolds, and more general Lie groupoids into a common curvature framework.

Boucetta states O'Neill's sectional-curvature relations for Riemannian
Lie algebroids~\cite{boucetta_riemannian_2011}.
Our contribution is a factorization of the full horizontal and vertical curvature tensors through the Lie algebroids of metric derivations, together with explicit formulas for the curvatures of the diagonal projections $g,\widehat g$. These formulas separate the contributions used in the applications below. For a Riemannian submersion Lie algebroid, the factorization takes the following form.
\begin{introtheorem}{\ref{th:algebroid.ONeill}}
Let $\Aroid$ be a \RSLA{}, with orthogonal horizontal lift $\gamma:T\Base\to\Aroid$, vertical projection $\omega:\Aroid\to\vertbundle$, and induced Levi-Civita connections $\nabla$, $\widetilde\nabla$, and $\widehat\nabla$ on $\Aroid$, $T\Base$, and $\vertbundle$, respectively.
\[
\begin{tikzcd}[column sep=large,row sep=large]
\Der_{\so}(\vertbundle)
& \Der_{\so}(\Aroid) \ar[l,"\widehat g",swap] \ar[r,"g"]
& \Der_{\so}(T\Base) \\
\vertbundle \ar[r,"\iota",hook] \ar[u,"\widehat\nabla"]
& \Aroid \ar[r,"\anchor",two heads] \ar[u,"\nabla"]
  \ar[l,"\omega",bend left=25]
& T\Base \ar[l,"\gamma",bend left=25] \ar[u,"\widetilde\nabla"]
\end{tikzcd}
\]
Then
\[
\curv^{\widetilde\nabla}
=(\nabla\circ\gamma)^*\curv^g
+g\circ\gamma^*\curv^\nabla
+g\circ\nabla\circ\curv^\gamma,
\]
\[
\curv^{\widehat\nabla}
=(\nabla\circ\iota)^*\curv^{\widehat g}
+\widehat g\circ\iota^*\curv^\nabla,
\]
where $g(D)=\anchor\circ D\circ\gamma$ and $\widehat g(D)=\omega\circ D\circ\iota$ are the diagonal projections of metric derivations, and $\curv^\varphi$ denotes the curvature of an anchor-preserving map $\varphi$.
\end{introtheorem}
These identities separate the contribution of the source-fibre curvature from those of the diagonal projections and the horizontal splitting. In the torus-density application, the Levi-Civita connection $\nabla$ is flat, so the curvature of the isotropy groups and that of the Wasserstein base are determined by the remaining terms.

Section~\ref{sec:preliminaries} fixes conventions and recalls the required groupoid, algebroid, and connection constructions. Sections~\ref{sec:riemannian.struc.aroids} and~\ref{sec:Riemannian.submersion.Lie.algebroids} establish the sectional-curvature and curvature-decomposition formulas. The finite-dimensional applications in Section~\ref{sec:examples} treat the round sphere, its nonconstant-metric Earth-ellipsoid counterpart, and a family of negative isotropy curvatures over the projective line. Section~\ref{ex:diffeo.groupoid} contains the principal torus-density application. The proofs of the main curvature identities are collected in Section~\ref{sec:proofs.of.main.theorems}.

\section{Preliminaries and conventions}\label{sec:preliminaries}

\subsection{Invariant metrics and curvature conventions}\label{sec:Lie.groups}
We fix the conventions for invariant metrics and curvature used throughout.  For a Lie group $G$ with Lie algebra $\mf g=T_{\Id}G$, we identify $X\in\mf g$ with the right-invariant vector field
\[
\widetilde X_g=dR_gX, \qquad \text{for } X\in\mf g,\ g\in G.
\]
The bracket on $\mf g$ is induced by this identification,
\[
[X,Y]_{\mf g}=[\widetilde X,\widetilde Y]_{TG}(\Id),
\qquad \text{for }X,Y\in\mf g.
\]
Thus, for a matrix Lie group, our bracket is the negative commutator:
\[
[A,B]_{\mf g}=-(AB-BA),
\qquad \text{for }A,B\in\mf g\subset M_n(\mbb R).
\]

An inner product $\ip{\cdot}{\cdot}_{\mf g}$ determines a right-invariant metric on $G$ by
\[
\ip{\widetilde{X}}{\widetilde{Y}}_G(g)=
\ip{dR_{g^{-1}}\widetilde{X}}{dR_{g^{-1}}\widetilde{Y}}_{\mf g},
\qquad \text{for }\widetilde{X},\widetilde{Y} \in T_gG,\ g\in G.
\]
For right-invariant vector fields, the Koszul formula consequently reduces to
\[
2\ip{\nabla_XY}{Z}_{\mf g}
=\ip{[X,Y]}{Z}_{\mf g}
-\ip{Y}{[X,Z]}_{\mf g}
-\ip{X}{[Y,Z]}_{\mf g},
\qquad \text{for }X,Y,Z\in\mf g,
\]
or, equivalently,
\[
\nabla_XY=\frac12\bigl(\ad_XY-\ad_X^{\trans}Y-\ad_Y^{\trans}X\bigr),
\qquad \text{for }X,Y\in\mf g,
\]
where $\ad_X^{\trans}$ denotes the transpose with respect to $\ip{\cdot}{\cdot}_{\mf g}$.  In particular, the Riemannian geometry of a right-invariant metric is encoded by the metric Lie algebra; see \cite{milnor_curvatures_1976}.

Our curvature convention is
\[
\curv(X,Y)Z=\nabla_{[X,Y]}Z-\nabla_X\nabla_YZ+\nabla_Y\nabla_XZ,
\qquad \text{for }X,Y,Z\in\mf g,
\]
and, for an orthonormal pair $X,Y\in\mf g$, the sectional curvature is $C(X,Y)=\ip{\curv(X,Y)X}{Y}$.  In these conventions, Arnold's formula \cite{arnold_sur_1966} takes the following form.
\begin{theorem}[\cite{arnold_sur_1966}]\label{th:arnold.1-2-3-4}
The sectional curvature of a Lie group $G$ with a right-invariant metric, in the direction determined by an orthonormal pair of vectors $X,Y$ in the Lie algebra $\mf g$ is given by the formula
\[
C({X}, {Y}) = \ip{\delta}{\delta} + 2\ip{\alpha}{\beta} - 3\ip{\alpha}{\alpha} -4\ip{B_X}{B_Y},
\]
where  
\[
2\alpha = [X, Y] = \ad_X Y, \quad 2\delta = \ad^{\trans}_X Y + \ad^\trans_Y X,
\]
\[
2\beta = \ad^{\trans}_Y X - \ad^{\trans}_X Y, \quad 
2B_X = \ad^\trans_X X, \quad 2B_Y= \ad^\trans_Y Y.
\]

\end{theorem}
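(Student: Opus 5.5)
Since the metric is right-invariant, all curvature quantities reduce to the identity, and I will work entirely in the metric Lie algebra $(\mf g,\ip{\cdot}{\cdot}_{\mf g})$. The proof is a direct algebraic expansion of $\ip{\curv(X,Y)X}{Y}$ using the connection formula $\nabla_XY=\tfrac12(\ad_XY-\ad^\trans_XY-\ad^\trans_YX)$ recalled above.

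\textbf{Setting up the four terms.} I would first write $\nabla_XY=\alpha-\delta$, $\nabla_YX=-\alpha-\delta$, $\nabla_XX=-2B_X$ and $\nabla_YY=-2B_Y$. These follow from $\ad_YX=-2\alpha$ and from the definitions of $\delta$, $B_X$, $B_Y$. Next I would use that $\nabla$ is metric for left-invariant inner products on right-invariant fields, i.e.\ $\ip{\nabla_ZU}{V}=-\ip{U}{\nabla_ZV}$ for $U,V,Z\in\mf g$. This holds because $\nabla_Z+\tfrac12\ad^\trans_{(\cdot)}Z$-type terms combine into a skew operator; concretely, $\nabla_Z$ is the $\ip{\cdot}{\cdot}_{\mf g}$-skew operator $U\mapsto \nabla_ZU$, by the Koszul formula. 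With this, I would move the outer derivatives across the inner product:
\[
\ip{\nabla_X\nabla_YX}{Y}=-\ip{\nabla_YX}{\nabla_XY},\qquad
\ip{\nabla_Y\nabla_XX}{Y}=-\ip{\nabla_XX}{\nabla_YY}.
\]
Then
\[
C(X,Y)=\ip{\nabla_{[X,Y]}X}{Y}+\ip{\nabla_YX}{\nabla_XY}-\ip{\nabla_XX}{\nabla_YY}.
\]

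\textbf{Evaluating.} The second term equals $\ip{-\alpha-\delta}{\alpha-\delta}=\ip{\delta}{\delta}-\ip{\alpha}{\alpha}$. The third equals $-4\ip{B_X}{B_Y}$. For the first term I set $Z=[X,Y]=2\alpha$ and use Koszul directly:
\[
2\ip{\nabla_ZX}{Y}=\ip{[Z,X]}{Y}-\ip{X}{[Z,Y]}-\ip{Z}{[X,Y]}.
\]
Rewriting the first two pieces with transposes gives $\ip{Z}{\ad^\trans_XY}$ for the first; the second becomes $-\ip{Z}{\ad^\trans_YX}$ up to sign bookkeeping. Hence
\[
\ip{\nabla_ZX}{Y}=\tfrac12\ip{2\alpha}{\ad^\trans_XY-\ad^\trans_YX}-2\ip{\alpha}{\alpha}=2\ip{\alpha}{\beta}'-2\ip{\alpha}{\alpha},
\]
where $\beta'$ is $\pm\beta$. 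Summing the three terms yields $\ip{\delta}{\delta}\pm2\ip{\alpha}{\beta}-3\ip{\alpha}{\alpha}-4\ip{B_X}{B_Y}$.

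\textbf{Main obstacle.} The only delicate point is sign bookkeeping, and it rests on two things. The first is the convention that right-invariant fields give the negative commutator, so $\ad_X^\trans$ is paired with this bracket. The second is checking that the sign of the $\beta$ term matches the statement's definition $2\beta=\ad^\trans_YX-\ad^\trans_XY$. I would verify both carefully via the identity $\ip{[Z,X]}{Y}=-\ip{[X,Z]}{Y}=-\ip{Z}{\ad^\trans_XY}$. I would also sanity-check the final sign on $\mathfrak{so}(3)$ with the bi-invariant metric. There $\delta=\beta=B=0$, and the formula must give $C=\tfrac14|[X,Y]|^2>0$, which requires the convention $\curv(X,Y)=\nabla_{[X,Y]}-[\nabla_X,\nabla_Y]$ to produce $-3\ip{\alpha}{\alpha}$ with the correct overall sign. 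Any discrepancy at that check would be traced back to the $\nabla_{[X,Y]}$ term.
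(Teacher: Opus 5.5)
Your route is the one the paper takes. The statement is the point-base case of Theorem~\ref{th:2-1-2-3-4}, whose proof in Section~\ref{sec:proofs.of.main.theorems} uses the same three-term split of $\ip{\curv(X,Y)X}{Y}$ and moves the outer derivative across by metric compatibility. For a group the anchor terms vanish because inner products of right-invariant fields are constant. That constancy is also all you need for skewness of $\nabla_Z$; note the metric is right-invariant, not left-invariant. The paper evaluates the first term by torsion-freeness, $\ip{\nabla_{[X,Y]}X}{Y}=\ip{\ad_XY}{\ad^\madj_XY-\nabla_XY}$, where you use Koszul directly; either works. Your values $\ip{\delta}{\delta}-\ip{\alpha}{\alpha}$ and $-4\ip{B_X}{B_Y}$ for the other two terms are correct.

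\textbf{The gap.} You never fix the sign of the $\beta$-term, which is the only non-routine content of the formula, and the sign you do write is wrong. In fact $\ip{[Z,X]}{Y}=-\ip{\ad_XZ}{Y}=-\ip{Z}{\ad^\trans_XY}$, not $+\ip{Z}{\ad^\trans_XY}$. Likewise $-\ip{X}{[Z,Y]}=\ip{X}{\ad_YZ}=+\ip{Z}{\ad^\trans_YX}$. Hence, with $Z=[X,Y]=2\alpha$,
\[
2\ip{\nabla_ZX}{Y}=\ip{Z}{\ad^\trans_YX-\ad^\trans_XY}-\ip{Z}{Z}=4\ip{\alpha}{\beta}-4\ip{\alpha}{\alpha}.
\]
So $\ip{\nabla_{[X,Y]}X}{Y}=2\ip{\alpha}{\beta}-2\ip{\alpha}{\alpha}$, and the sum is exactly the stated formula. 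Your displayed $\tfrac12\ip{2\alpha}{\ad^\trans_XY-\ad^\trans_YX}$ equals $-2\ip{\alpha}{\beta}$ and would contradict the statement.

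\textbf{The sanity check.} The check you propose to settle the sign rests on a false claim. For a bi-invariant metric $\ad^\trans_X=-\ad_X$, so indeed $\delta=B_X=B_Y=0$. But $2\beta=-[Y,X]+[X,Y]=2[X,Y]$, so $\beta=2\alpha\neq0$. With $\beta=0$ the formula would return $-3\ip{\alpha}{\alpha}<0$ under any convention. You would then be led to change the curvature convention or the $\nabla_{[X,Y]}$ term, both of which are correct.

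Done correctly, the check gives $4\ip{\alpha}{\alpha}-3\ip{\alpha}{\alpha}=\tfrac14\ip{[X,Y]}{[X,Y]}$ with the $+$ sign, and $-7\ip{\alpha}{\alpha}$ with the $-$ sign. So it does confirm $+2\ip{\alpha}{\beta}$, consistent with $C=\tfrac14$ in the paper's $\so(3)$ example.

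Finally, the negative-commutator convention plays no role. Every bracket in both the computation and the statement is $[\cdot,\cdot]_{\mf g}$ defined through right-invariant fields, so no matrix commutator ever enters.
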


\subsection{Riemannian submersions}\label{sec:oneill}
\begin{definition}\label{def:riemannian.submersion}
A smooth map $\pi:N\to\Base$ between Riemannian manifolds is a \emph{Riemannian submersion} if it is a surjective submersion and, for every $x\in N$, the restriction
\[
d\pi_x:(\Ker d\pi_x)^{\perp}\longrightarrow T_{\pi(x)}\Base
\]
is an isometry.
\end{definition}
Set $\mc V=\Ker d\pi$ and $\mc H=\mc V^{\perp}$.  Thus
\[
TN=\mc H\oplus_{\perp}\mc V,
\qquad
\begin{tikzcd}
\mc V \ar[r, hook] & TN \ar[r, "d\pi", two heads] & \pi^*T\Base,
\end{tikzcd}
\]
and $d\pi|_{\mc H}$ is an isometry.  Let $(\cdot)^{\mc H}:TN\to\mc H$ and $(\cdot)^{\mc V}:TN\to\mc V$ denote the horizontal and vertical projections, respectively.  Thus every $X\in\Gamma TN$ decomposes as $X=X^{\mc H}+X^{\mc V}$.  If $\nabla$ denotes the Levi-Civita connection of $N$, O'Neill's tensors are, for $X,Y\in\Gamma TN$,
\[
T_XY=(\nabla_{X^{\mc V}}Y^{\mc V})^{\mc H}
      +(\nabla_{X^{\mc V}}Y^{\mc H})^{\mc V},
\]
\[
A_XY=(\nabla_{X^{\mc H}}Y^{\mc V})^{\mc H}
      +(\nabla_{X^{\mc H}}Y^{\mc H})^{\mc V}.
\]
For vertical vector fields $X^{\mc V},Y^{\mc V}\in\Gamma\mc V$, the tensor
\[
T_{X^{\mc V}}Y^{\mc V}=(\nabla_{X^{\mc V}}Y^{\mc V})^{\mc H}
\]
is the second fundamental form of the fibres.  For horizontal vector fields $X^{\mc H},Y^{\mc H}\in\Gamma\mc H$,
\[
A_{X^{\mc H}}Y^{\mc H}
=(\nabla_{X^{\mc H}}Y^{\mc H})^{\mc V}
=\frac12[X^{\mc H},Y^{\mc H}]^{\mc V}
\]
measures the non-integrability of $\mc H$.  Let $\widetilde\nabla$ and $\widehat\nabla$ denote the Levi-Civita connections of the base and the fibres, with respective curvature tensors $\widetilde{\curv}$ and $\widehat{\curv}$.  The identities needed below are the following parts of O'Neill's formulas \cite{oneill_fundamental_1966}.
\begin{theorem}[\cite{oneill_fundamental_1966}]\label{th:oneill.formulas.riemannian.submersion}
Let $V_1, V_2, V_3, V_4$ be vertical vector fields on $N$, then
\[
\ip{\curv (V_1, V_2)V_3} {V_4} = \ip{\widehat{\curv}(V_1,V_2)V_3 }{V_4} - \ip{T_{V_1} V_3}{T_{V_2}V_4} + \ip{T_{V_2} V_3}{T_{V_1} V_4}.
\]
Let $H_1, H_2, H_3, H_4$ be horizontal vector fields on $N$, then
\begin{multline*}
\ip{\curv(H_1,H_2) H_3}{H_4} = \ip{\tilde{\curv}(H_1,H_2) H_3 }{H_4} - 2\ip{A_{H_1} H_2}{A_{H_3} H_4} \\
+ \ip{A_{H_2} H_3}{A_{H_1} H_4} + \ip{A_{H_3} H_1}{A_{H_2} H_4} .
\end{multline*}
\end{theorem}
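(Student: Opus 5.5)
We prove both identities directly from the definitions of $T$ and $A$ and the Gauss-type decomposition of $\nabla$. The curvature convention $\curv(X,Y)Z=\nabla_{[X,Y]}Z-\nabla_X\nabla_YZ+\nabla_Y\nabla_XZ$ is the negative of O'Neill's. We therefore first rewrite O'Neill's identities in his convention, check that every term flips sign consistently, and then argue as follows.

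\emph{Vertical identity.} Restricted to a fibre $F$, the vertical distribution is the tangent bundle of a submanifold. For vertical $V,W$,
\[
\nabla_VW=\widehat\nabla_VW+T_VW,
\]
where $\widehat\nabla_VW=(\nabla_VW)^{\mc V}$ is the Levi-Civita connection of the induced metric on $F$, and $T_VW=(\nabla_VW)^{\mc H}$ is its second fundamental form. The form $T_VW$ is tensorial and symmetric in $V,W$, since $[V,W]$ is vertical. We expand $\ip{\nabla_{V_1}\nabla_{V_2}V_3}{V_4}$ as
\[
\ip{\widehat\nabla_{V_1}\widehat\nabla_{V_2}V_3}{V_4}+\ip{\nabla_{V_1}(T_{V_2}V_3)}{V_4}.
\]
Metric compatibility gives
\[
\ip{\nabla_{V_1}(T_{V_2}V_3)}{V_4}=-\ip{T_{V_2}V_3}{\nabla_{V_1}V_4}=-\ip{T_{V_2}V_3}{T_{V_1}V_4},
\]
because $T_{V_2}V_3$ is horizontal and $V_4$ is vertical. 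The bracket term $\nabla_{[V_1,V_2]}V_3$ has vertical part $\widehat\nabla_{[V_1,V_2]}V_3$ and contributes nothing extra after pairing with $V_4$. Antisymmetrizing in $V_1,V_2$ gives exactly the Gauss equation, with the signs as stated. This is the standard Gauss equation, so the computation is routine.

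\emph{Horizontal identity.} We use basic horizontal fields $H_i$, that is, horizontal lifts of vector fields $\tilde H_i$ on $\Base$. This is legitimate because both sides are tensorial. Three facts about basic fields are needed:
\begin{enumerate}[label=(\roman*)]
\item $(\nabla_{H_1}H_2)^{\mc H}$ is the basic lift of $\widetilde\nabla_{\tilde H_1}\tilde H_2$. This follows from the Koszul formula, since the inner products of basic fields are functions pulled back from $\Base$.
\item $A_{H_1}H_2=\tfrac12[H_1,H_2]^{\mc V}$ is alternating. This also comes from Koszul: $\ip{\nabla_{H_1}H_2}{V}$ reduces to $\tfrac12\ip{[H_1,H_2]}{V}$, because $[H_i,V]$ is vertical.
\item $A_H$ is skew-adjoint, so $\ip{A_{H}E}{F}=-\ip{E}{A_HF}$ for all $E,F$. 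This expresses that $\ip{(\nabla_HV)^{\mc H}}{H'}=-\ip{V}{A_HH'}$.
\end{enumerate}

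We then write $\nabla_{H_2}H_3=\widetilde\nabla_{H_2}H_3+A_{H_2}H_3$ and compute
\[
\ip{\nabla_{H_1}\nabla_{H_2}H_3}{H_4}
=\ip{\widetilde\nabla_{H_1}\widetilde\nabla_{H_2}H_3}{H_4}+\ip{\nabla_{H_1}(A_{H_2}H_3)}{H_4}.
\]
The last term equals $-\ip{A_{H_2}H_3}{A_{H_1}H_4}$. For the bracket term, split
\[
[H_1,H_2]=[H_1,H_2]^{\mc H}+2A_{H_1}H_2,
\]
where $[H_1,H_2]^{\mc H}$ is basic over $[\tilde H_1,\tilde H_2]$. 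The vertical piece gives
\[
2\ip{\nabla_{A_{H_1}H_2}H_3}{H_4}=2\ip{(\nabla_{V}H_3)^{\mc H}}{H_4},\qquad V=A_{H_1}H_2.
\]
Since $[V,H_3]$ is vertical, we have $(\nabla_VH_3)^{\mc H}=(\nabla_{H_3}V)^{\mc H}$, and by (iii) this pairs with $H_4$ to give $-\ip{V}{A_{H_3}H_4}$. Combining everything and antisymmetrizing yields
\[
-2\ip{A_{H_1}H_2}{A_{H_3}H_4}+\ip{A_{H_2}H_3}{A_{H_1}H_4}-\ip{A_{H_1}H_3}{A_{H_2}H_4}.
\]
By (ii), $-\ip{A_{H_1}H_3}{A_{H_2}H_4}=\ip{A_{H_3}H_1}{A_{H_2}H_4}$, which gives the stated form.

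The main obstacle is keeping the signs correct for the paper's reversed curvature convention, together with the factor $2$ from the vertical part of $[H_1,H_2]$. To control this, we will redo the bracket-term computation carefully and then check the resulting formula on the Hopf fibration or on a Lie-group quotient against Theorem~\ref{th:arnold.1-2-3-4}.
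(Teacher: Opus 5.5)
Your proof is correct. It is the classical direct argument, essentially O'Neill's own: the Gauss formula for the fibres, then basic horizontal fields with $A_{H_1}H_2=\tfrac12[H_1,H_2]^{\mc V}$, skew-adjointness of $A_H$, and $[V,H]$ vertical for basic $H$. The signs come out exactly as stated in the paper's convention.

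Two small corrections:
\begin{enumerate}
\item Your opening remark about conventions is false. O'Neill uses $R_{XY}=\nabla_{[X,Y]}-[\nabla_X,\nabla_Y]$, which is the paper's convention, and the statement is his pair of fundamental equations in that convention. The check you propose is also ill-posed. A change of curvature convention flips only the two curvature terms, not the $A$- and $T$-terms, so ``every term flips sign consistently'' cannot happen. Since you compute directly in the paper's convention, just drop that step.
\item In (ii), the Koszul reduction also needs $V\ip{H_1}{H_2}=0$. This holds because inner products of basic fields are basic.
\end{enumerate}

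The paper gives no proof of this theorem; it is quoted from O'Neill. The paper's own argument is for the Lie-algebroid analogue (Theorem~\ref{th:algebroid.ONeill}, Theorem~\ref{th:curvature.maps.between.derivations}, Corollary~\ref{cor:algebroid.Oneill.formulas}), and it is structural rather than computational. It writes $\widetilde\nabla=g\circ\nabla\circ\gamma$ and $\widehat\nabla=\widehat g\circ\nabla\circ\iota$. Lemma~\ref{lem:comp.curv} then splits each curvature into curvatures of anchor-preserving maps, and the correction terms are read off from the off-diagonal blocks of $\nabla_{\gamma(S)}$ and $\nabla_{\iota(V)}$.

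Your terms match the paper's one for one, up to moving them to the other side (the paper solves for $\curv^{\widetilde\nabla}$ and $\curv^{\widehat\nabla}$):
\begin{itemize}
\item Your term $-2\ip{A_{H_1}H_2}{A_{H_3}H_4}$, from the vertical part of $[H_1,H_2]$, is $g\circ\nabla\circ\curv^\gamma$.
\item The two terms from differentiating $A_{H_2}H_3$ and $A_{H_1}H_3$ are $(\nabla\circ\gamma)^*\curv^g$. This is the part of $[\nabla_{\gamma(S_1)},\nabla_{\gamma(S_2)}]$ routed through $\iota\circ\omega$.
\item Your $T$-terms are $(\nabla\circ\iota)^*\curv^{\widehat g}$. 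There is no bracket correction because $\curv^\iota=0$, which is your observation that $[V_1,V_2]$ is vertical.
\end{itemize}

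What each route buys: yours is elementary and holds for an arbitrary Riemannian submersion, using basic fields on $N$. The paper's route works entirely over the base and yields identities of curvature operators rather than $4$-tensors. It also gives each correction term a conceptual origin as the curvature of a specific map. However, it covers only submersions $\trg:\Groid_x\to\Base$ arising from right-invariant source-fibre metrics. So the general classical statement still needs an argument like O'Neill's, which is what you give.
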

The vertical identity is the Gauss equation for the fibres, while the horizontal correction terms record the failure of $\mc H$ to be involutive.  These are the two features that reappear in the Lie-algebroid formulation.

\subsection{Lie groupoids}\label{sec:Lie.groupoids}
We recall the definitions and examples of Lie groupoids and Lie algebroids used below, following \cite{mackenzie_general_2005}.

\begin{definition}
A \emph{Lie groupoid} $\Groid\rightrightarrows\Base$ consists of smooth manifolds $\Groid$ and $\Base$, surjective submersions $\src,\trg:\Groid\to\Base$, smooth unit and inversion maps $\id:\Base\to\Groid$ and $g\mapsto g^{-1}$, and a smooth multiplication
\[
\Groid*\Groid \longrightarrow\Groid,
\qquad
(h,g)\longmapsto hg,
\qquad
\Groid*\Groid=\set{(h,g)\in\Groid\times\Groid\mid \src(h)=\trg(g)},
\]
satisfying the groupoid associativity, unit, and inverse identities.  
For $x,y\in\Base$, set
\[
\Groid_x=\src^{-1}(x),
\qquad
\Groid^y=\trg^{-1}(y),
\qquad
\Groid_x^y=\Groid_x\cap\Groid^y.
\]
The set $\Groid_x$ is called the \emph{source fibre} over $x$, and $\Groid^y$ is called the \emph{target fibre} over $y$.  The intersection $\Groid_x^x$ is the \emph{isotropy group} at $x\in\Base$ and is denoted by $H_x=\Groid_x^x$.
\end{definition}

\begin{example}\label{ex:lie.groupoids}
Let $\Base$ be a smooth manifold, and $G$ a Lie group.
\begin{enumerate}
\item Let $G\rightrightarrows\set{*}$ be a Lie group over a point.  This is a Lie groupoid with $\src=\trg=*$. 

\item The Cartesian product $\Base\times\Base\rightrightarrows\Base$ is a Lie groupoid called the \emph{pair groupoid}, with $\src(y,x)=x$, $\trg(y,x)=y$, and product $(z,y)(y,x)\coloneq(z,x)$, for $x,y,z\in\Base$.  The object inclusion is the diagonal $\id_x=(x,x)$, and the inverse of $(y,x)$ is $(x,y)$.

\item The product manifold $\Base\times G\times\Base$ is the \emph{trivial Lie groupoid} on $\Base$ with group $G$, with structure maps
\[
\src(y,g,x)=x,
\qquad
\trg(y,g,x)=y,
\qquad
\id_x=(x,\Id,x),
\]
and
\[
(z,h,y)(y,g,x)=(z,hg,x),
\qquad
(y,g,x)^{-1}=(x,g^{-1},y),
\]
for $x,y,z\in\Base$ and $g,h\in G$.  A Lie groupoid for which $(\src,\trg):\Groid\to\Base\times\Base$ is a surjective submersion is called \emph{locally trivial}; locally it is modelled on a trivial Lie groupoid.

\item\label{ex:action.groupoid} Let $G\times\Base\to\Base$ be a smooth action.  The product manifold $G\times\Base$ is the \emph{action Lie groupoid}, denoted by $G\ltimes\Base\rightrightarrows\Base$, with
\[
\src(g,x)=x,
\qquad
\trg(g,x)=gx,
\qquad
(g,x)^{-1}=(g^{-1},gx),
\qquad
(h,gx)(g,x)=(hg,x),
\]
for $g,h\in G$ and $x\in\Base$, and object inclusion $\id_x=(\Id,x)$.  There is an injective morphism into the trivial Lie groupoid $\Base\times G\times\Base$, given by $(g,x)\mapsto(gx,g,x)$.
\end{enumerate}
\end{example}

\begin{example}[The Gauge Groupoid of a Principal Bundle]\label{ex:gauge.groupoid}
Let $P(\Base,H,\pi)$ be a principal $H$-bundle.  Consider the diagonal right action of $H$ on $P\times P$, given by
\[
(p_2,p_1)\cdot h=(p_2\cdot h,p_1\cdot h),
\qquad (p_2,p_1)\in P\times P,\ h\in H,
\]
and denote the orbit of $(p_2,p_1)$ by $[p_2,p_1]$.  The quotient $(P\times P)/H$ is a Lie groupoid over $\Base$, with source and target maps
\[
\src([p_2,p_1])=\pi(p_1),
\qquad
\trg([p_2,p_1])=\pi(p_2),
\]
for $p_1,p_2\in P$.  The object inclusion is $\id_x=[p,p]$, for any $p\in\pi^{-1}(x)$, and the product is given by
\[
[p_3\cdot h,p_2\cdot h][p_2,p_1]=[p_3,p_1],
\qquad p_1,p_2,p_3\in P,\ h\in H.
\]
Finally, $[p_2,p_1]^{-1}=[p_1,p_2]$.  This is the \emph{gauge groupoid associated to the principal bundle $P(\Base,H,\pi)$}.

Conversely, let $\Groid\rightrightarrows\Base$ be a Lie groupoid for which $(\src,\trg):\Groid\to\Base\times\Base$ is a surjective submersion, and fix $x\in\Base$.  Then
\[
P=\Groid_x=\src^{-1}(x),
\qquad
H_x=\Groid_x^x,
\qquad
\pi=\trg_x:P\to\Base,
\]
defines a principal $H_x$-bundle.  Its right action is the groupoid multiplication $(p,h)\mapsto ph$, for $p\in\Groid_x$ and $h\in H_x$, and is free and transitive on the fibres of $\trg_x$; see \cite[Section~1.3]{mackenzie_general_2005}.

Thus a principal bundle is ``equivalent'' to a Lie groupoid for which $(\src,\trg)$ is a surjective submersion.  However, the correspondence is not canonical, since passing from the Lie groupoid to the principal bundle requires the choice of a base point $x\in\Base$.  Phenomena in principal bundle theory that depend on a reference point can often be formulated intrinsically in groupoid terms, independently of such a choice.  Examples include the holonomy groups arising from a principal connection and the frame bundle $\text{Fr}(E)$ of a vector bundle $E$, considered next.
\end{example}

\begin{example}[General Linear Groupoid]\label{ex.GL(E)}
For a rank-$n$ vector bundle $E\to\Base$, the \emph{general linear groupoid} $\GL(E)\rightrightarrows\Base$ is given by
\[
\GL(E)_x^y=\set{\phi:E_x\to E_y\mid \phi\text{ is a linear isomorphism}},
\qquad x,y\in\Base,
\]
with multiplication given by composition.  It is canonically isomorphic to the gauge groupoid of the frame bundle $\text{Fr}(E)$.  The frame bundle is constructed by comparing each fibre $E_x$ with a fixed model vector space $\mbb R^n$, and therefore depends on this choice.  In contrast, the general linear groupoid is defined intrinsically, without the need to choose a reference model fibre.
\end{example}

\subsection{Lie algebroids}
The Lie algebroid of a Lie groupoid is constructed from right-invariant vector fields tangent to the source fibres. We first define abstract Lie algebroids, then recall this construction and the examples used below.

\begin{definition}\label{def:lie.algebroid}
A Lie algebroid $\Aroid$ over a manifold $\Base$, is a vector bundle $\Aroid \to \Base$, with a vector bundle map $\anchor:\Aroid \to T\Base$ over $\Base$, called the \emph{anchor} of $\Aroid$, and a bracket $[ \ , \ ]:\Gamma\Aroid \times \Gamma \Aroid \to \Gamma \Aroid$ which is $\mathbb{R}$-bilinear, alternating and satisfies the Jacobi-identity, subject to the conditions that
\begin{align}
[X, fY]_\Aroid = f[X, Y]_\Aroid + \anchor X (f) Y, \label{eq:bracket.deriv}  \\
\anchor [X, Y]_\Aroid = \left[ \anchor X , \anchor Y  \right]_{T\Base},\label{eq:anchor.morphism}
\end{align}
for all $X,Y \in \Gamma \Aroid, f\in C^{\infty}(\Base)$. 
The Lie algebroid is \emph{transitive} if $\anchor$ is fibrewise surjective. In this case, the kernel of $\anchor:\Aroid \to T\Base$ is a bundle of Lie algebras over $\Base$, and we call $\vertbundle$ the \emph{vertical bundle} in $\Aroid$, see \cite{mackenzie_general_2005} for more details. 
\end{definition}

We now define morphisms of Lie algebroids over a common base manifold. 
\begin{definition}\label{def:lie.algebroid.morphism}
Let $\Aroid_1$ and $\Aroid_2$ be two Lie algebroids over the same base space $\Base$. Then a \emph{morphism of Lie algebroids} $\varphi: \Aroid_1 \to \Aroid_2$ over $\Base$, is a vector bundle morphism such that $\anchor_2\circ \varphi = \anchor_1$ and $\varphi[X, Y]_1 = [\varphi(X), \varphi(Y)]_2$, for all $X, Y \in \Gamma\Aroid_1$. 
\end{definition}
\begin{remark}
A \emph{transitive} Lie algebroid gives a short exact sequence of Lie algebroids
\begin{equation}\label{eq:trans.lie.abroid}
\begin{tikzcd}
\vertbundle \ar[r, "\iota", hook] & \Aroid \ar[r, "\anchor", two heads]  &T\Base,
\end{tikzcd}
\end{equation}
Here $\iota$ and $\anchor$ are morphisms of Lie algebroids.  The tangent bundle $T\Base$ has identity anchor, whereas $\vertbundle$ is a \emph{totally intransitive Lie algebroid}, with zero anchor.  This sequence is the Lie-algebroid counterpart of the horizontal-vertical decomposition associated to a Riemannian submersion.  The latter lives over $N$ and therefore involves the pullback $\pi^*T\Base$, whereas the algebroid sequence lives entirely over $\Base$, even when $\rank(\Aroid)\geq\dim(\Base)$.
\end{remark}

For a Lie group $G$, one constructs its Lie algebra as the space of right-invariant vector fields on $G$, with the Lie algebra bracket given by the restriction of the bracket of vector fields, see Section~\ref{sec:Lie.groups}. Similarly, given a Lie groupoid $\Groid \rightrightarrows \Base$, we want to consider right-invariant vector fields on $\Groid$. Let $g\in \Groid$, with $\src(g) = x$ and $\trg(g) = y$. Define right multiplication on $\Groid_y $ by 
\[
R_g : \Groid_y \to \Groid_x, \quad h\mapsto hg.
\]
Note that $R_g$ cannot be defined on the entire $\Groid$, only on the fibre $\Groid_y = \src^{-1}(y)$. This implies that the notion of right-invariance only makes sense for vector fields tangent to source fibres. 
\begin{definition}\label{def:right.invariant.vector.fields.groupoid}
A \emph{right-invariant} vector field on a Lie groupoid $\Groid$ is a vector field $\tilde{X}\in \Gamma T\Groid$ satisfying
\begin{enumerate}
\item $\tilde{X}$ is tangent to source fibres, i.e. $d\src \tilde{X} = 0$,
\item $\tilde{X}_{gh} 
= dR_h\tilde{X}_{g} $, where $\src(g) = \trg(h)$. 
\end{enumerate}
\end{definition}
\begin{figure}
\centering
\includegraphics[width=0.75\textwidth]{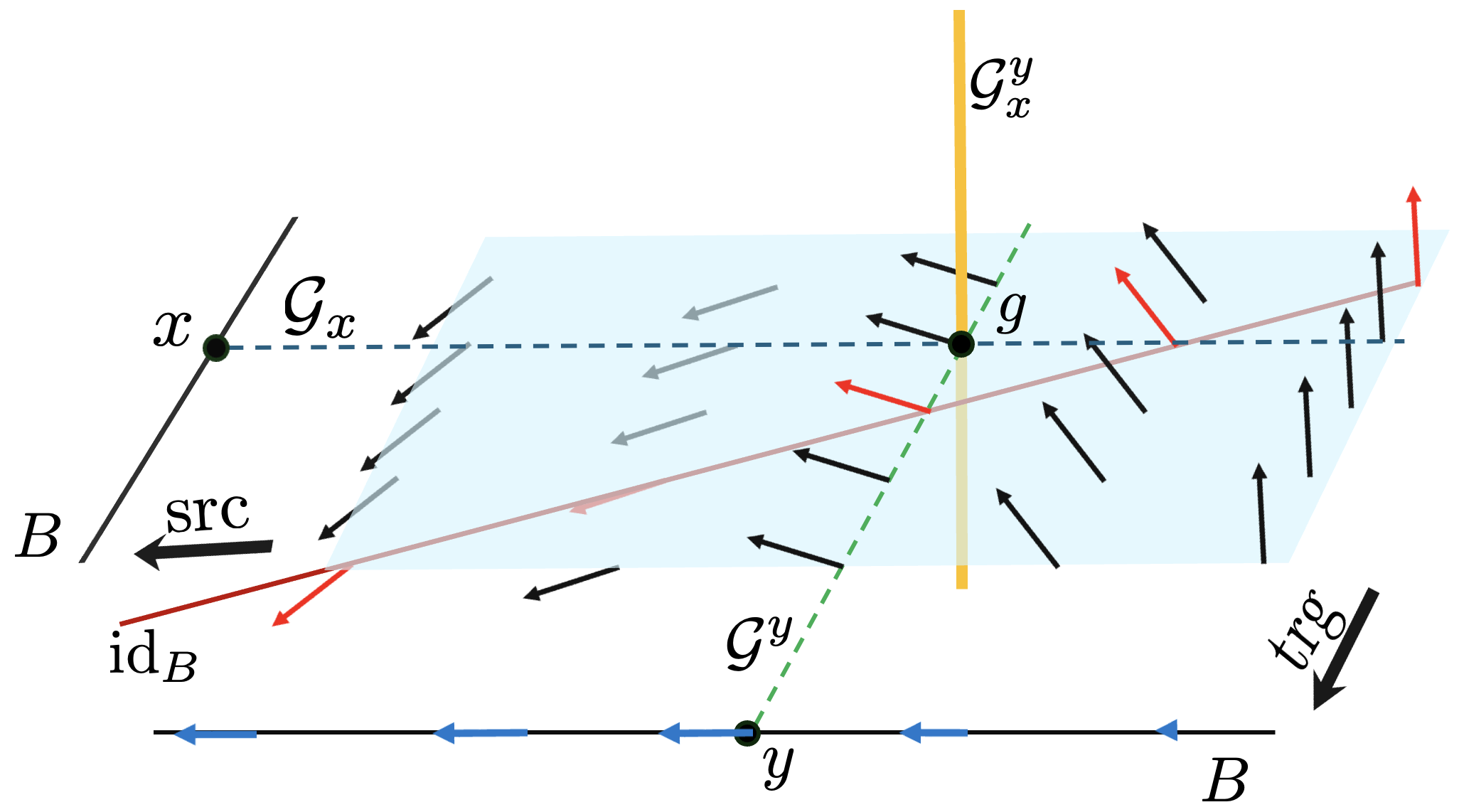}
\caption{A right-invariant vector field tangent to the source fibres of $\Groid$. Its values are related by right translations along the target fibres, and its restriction to $\id_{\Base}$ is a section of the Lie algebroid $\Aroid$. The anchor is induced by $d\trg$. When $(\src,\trg):\Groid \to \Base \times \Base$ is a surjective submersion, each source fibre $\Groid_x$ is a principal bundle with projection $\trg$.}
\label{fig:groupoid.vectorfield}
\end{figure}
Evidently, the value of a right-invariant vector field $\tilde{X}$ on $\Groid$ is determined by its values at the identity submanifold $\id_\Base $.  Conversely, consider the vector bundle 
\[
\Aroid \coloneq \bigcup_{x\in \Base} T_{\id_x} \Groid_x \longrightarrow \Base,
\]
and given a section $X:\Base \to \Aroid$. Then $X$ uniquely determines a right-invariant vector field $\tilde X$ on $\Groid$ by 
\begin{equation}\label{eq:right.invariant.exstension}
\tilde{X}_g =  dR_g{X}_{y}, \quad \text{where } y = \trg(g).
\end{equation}
This correspondence leads to the following Definition/Theorem, see also Figure \ref{fig:groupoid.vectorfield}. 
\begin{theorem}[Section~3.5 in \cite{mackenzie_general_2005}]\label{th:lie.algebroid.of.lie.groupoid}
The \emph{Lie algebroid of a Lie groupoid} $\Groid \rightrightarrows \Base$ is the vector bundle 
\[
\Aroid \coloneq \bigcup_{x\in \Base} T_{\id_x} \Groid_x \longrightarrow \Base,
\]
with anchor map $\anchor:\Aroid \to T\Base$,
\[
\anchor_x:\Aroid_x \to T_x\Base, \quad X \mapsto d\trg_{\id_x} X,
\]
and Lie algebroid bracket $[\cdot, \cdot]_{\Aroid}:\Gamma\Aroid \times \Gamma\Aroid \to \Gamma \Aroid$ given by 
\[
[X, Y]_{\Aroid} \coloneq [\tilde{X},\tilde{Y}]_{T\Groid}\circ \id,
\]
where $\tilde{X}$, $\tilde{Y}$ are the extensions of $X, Y\in \Gamma \Aroid$ to right-invariant vector fields as in equation~\eqref{eq:right.invariant.exstension}. 
\end{theorem}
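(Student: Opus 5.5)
The approach is to check, in order, that the anchor, the bundle and the bracket are well defined, and then the Lie algebroid axioms. I take $\Aroid_x = T_{\id_x}\Groid_x = \Ker d\src_{\id_x}$, the restriction of the subbundle $\Ker d\src\subset T\Groid$ to the submanifold $\id_\Base$. Since $\src$ is a submersion, $\Ker d\src$ is a smooth subbundle of constant rank, so $\Aroid$ is a smooth vector bundle over $\Base$. The anchor $\anchor = d\trg|_{\Aroid}$ is then a smooth bundle map.

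Next I show that \eqref{eq:right.invariant.exstension} gives a bijection between $\Gamma\Aroid$ and right-invariant vector fields.
\begin{enumerate}
\item For $g\in\Groid_x^y$, the map $R_g:\Groid_y\to\Groid_x$ is a diffeomorphism with inverse $R_{g^{-1}}$.
\item Hence $\tilde X_g=dR_gX_y$ is tangent to $\Groid_x$.
\item Right-invariance follows from $R_h\circ R_g=R_{gh}$.
\item For smoothness, write $\tilde X_g = d_1m(X_{\trg(g)},0_g)$, the differential of the multiplication $m$ on $\Groid*\Groid$ applied to a tangent vector of $\Groid*\Groid$. This is smooth because $m$ and $\trg$ are smooth.
\end{enumerate}

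For the bracket, the key step is that right-invariant fields form a Lie subalgebra of $\Gamma T\Groid$. Tangency to the $\src$-fibres is preserved by brackets, since $\Ker d\src$ is involutive: it is the tangent distribution of the fibre foliation. On each fibre $\Groid_y$, the field $\tilde X|_{\Groid_y}$ is $R_g$-related to $\tilde X|_{\Groid_x}$. Naturality of the bracket under related vector fields then gives $[\tilde X,\tilde Y]_{gh}=dR_h[\tilde X,\tilde Y]_g$, and the bracket may be computed fibrewise. Therefore $[X,Y]_\Aroid$ is a section of $\Aroid$ whose right-invariant extension is exactly $[\tilde X,\tilde Y]$. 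Bilinearity, the alternating property and the Jacobi identity are then inherited from vector fields on $\Groid$.

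For the Leibniz rule \eqref{eq:bracket.deriv}, I note that $\widetilde{fY}=(f\circ\trg)\tilde Y$, because $\trg(hg)=\trg(h)$. Then
\[
[\tilde X,(f\circ\trg)\tilde Y]=(f\circ\trg)[\tilde X,\tilde Y]+\tilde X(f\circ\trg)\,\tilde Y,
\]
and at $\id_y$ we have $\tilde X(f\circ\trg)=(d\trg\, X)f=(\anchor X)f$.

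For the anchor identity \eqref{eq:anchor.morphism}, I show that $\tilde X$ is $\trg$-related to $\anchor X$. Indeed $\trg\circ R_g=\trg$ on $\Groid_y$, so
\[
d\trg(\tilde X_g)=d\trg(X_{\trg(g)})=(\anchor X)_{\trg(g)}.
\]
Brackets of $\trg$-related fields are $\trg$-related, and evaluating at $\id_y$ gives $\anchor[X,Y]_\Aroid=[\anchor X,\anchor Y]$.

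I expect the main obstacle to be the rigorous justification of smoothness and of fibrewise computation: showing that $\tilde X$ is a smooth vector field on all of $\Groid$, rather than merely on each fibre, and that its global bracket restricts to the fibrewise bracket. I would handle this through the smooth dependence of $m$ on $\Groid*\Groid$ together with involutivity of $\Ker d\src$, both as used above.
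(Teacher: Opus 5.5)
Your proof is correct and complete. It is the standard argument behind the result the paper cites from Mackenzie, Section~3.5: right-invariant fields form a Lie subalgebra by fibrewise naturality of the bracket under the right translations $R_h$; the Leibniz rule follows from $\widetilde{fY}=(f\circ\trg)\tilde Y$; and the anchor identity follows from $\trg$-relatedness of $\tilde X$ and $\anchor X$. The paper gives no proof of its own and only defers to that reference.
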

The following integrable Lie algebroids correspond to the groupoids in Section~\ref{sec:Lie.groupoids}.

\begin{example}\label{ex:aroids}
Let $\Base$ be a smooth manifold and $G$ a Lie group with Lie algebra $\mf g$.
\begin{enumerate}
\item The Lie algebra $\mf g$, with anchor $\anchor:\mf g\to\set{0}$, is a Lie algebroid over a point.  It is the Lie algebroid corresponding to a Lie group over a point.

\item\label{ex:tangent.bundle} The tangent bundle $T\Base$ is a Lie algebroid over $\Base$, with anchor equal to the identity and bracket given by the usual bracket of vector fields.  It is isomorphic to the Lie algebroid of the pair groupoid $\Base\times\Base\rightrightarrows\Base$.

\item The vector bundle $T\Base\oplus(\mf g\times\Base)$ is a transitive Lie algebroid with anchor
\[
\anchor:T\Base\oplus(\mf g\times\Base)\longrightarrow T\Base,
\qquad
S\oplus X\longmapsto S,
\]
and bracket
\[
[S_1\oplus X_1,S_2\oplus X_2]_{\Aroid}
=[S_1,S_2]_{T\Base}\oplus
\big(S_1\cdot X_2-S_2\cdot X_1+[X_1,X_2]_{\mf g}\big),
\]
for $S_1,S_2\in\Gamma T\Base$ and sections $X_1,X_2:\Base\to\mf g\times\Base$.  This is the \emph{trivial Lie algebroid over $\Base$}; its vertical bundle is $\mf g\times\Base$, and it is the Lie algebroid of the trivial Lie groupoid $\Base\times G\times\Base\rightrightarrows\Base$.
Its short exact sequence is
\[
\begin{tikzcd}
\mf g\times\Base \ar[r, "\iota", hook] &
T\Base\oplus(\mf g\times\Base) \ar[r, "\pi_1", two heads] &
T\Base.
\end{tikzcd}
\]

\item\label{ex:action.algebroids} Let $\rho_*:\mf g\to\Gamma T\Base$ be a Lie algebra action on $\Base$.  The trivial vector bundle $\mf g\times\Base\to\Base$, denoted by $\mf g\ltimes\Base$, is the \emph{action Lie algebroid}, with anchor
\[
\anchor(X,x)=(\rho_*X)(x),
\qquad X\in\mf g,\ x\in\Base,
\]
and bracket
\[
[X,Y]_{\Aroid}=[X,Y]_{\mf g}+(\rho_*X)\cdot Y-(\rho_*Y)\cdot X,
\]
for sections $X,Y:\Base\to\mf g\ltimes\Base$.  If the action is transitive, then $\mf g\ltimes\Base$ is transitive and its vertical bundle is the bundle of stabilizer subalgebras.  It is the Lie algebroid of the action Lie groupoid $G\ltimes\Base\rightrightarrows\Base$ associated to a Lie group action $\rho:G\times\Base\to\Base$.
In the transitive case its short exact sequence is
\[
\begin{tikzcd}
\vertbundle \ar[r, "\iota", hook] &
\mf g\ltimes\Base \ar[r, "\anchor", two heads] &
T\Base,
\end{tikzcd}
\]
where $\vertbundle$ is the bundle of stabilizer subalgebras.
\end{enumerate}
\end{example}

A Lie algebroid may be viewed, heuristically, as the infinitesimal $\src$-tangent bundle of a Lie groupoid $\Groid$ after quotienting out its internal symmetries.  From this viewpoint, $\vertbundle$ and $T\Base$ are the two extremal cases: $\vertbundle$ consists of the infinitesimal isotropy directions, whereas $T\Base$ records the base directions obtained after quotienting them out.  A transitive Lie algebroid $\Aroid$ interpolates between these two extremes through the short exact sequence
\[
\begin{tikzcd}
\vertbundle \ar[r, "\iota", hook] &
\Aroid \ar[r, "\anchor", two heads] &
T\Base.
\end{tikzcd}
\]

\begin{example}[The Atiyah algebroid] \label{ex:atiyah.algebroid}
Let $P(\Base,H,\pi)$ be a principal $H$-bundle, with Lie algebra $\mf h$.  The lifted $H$-action on $TP$ defines the quotient vector bundle ${\textstyle\frac{TP}{H}}\to\Base$.  Its sections are identified with the $H$-invariant vector fields on $P$; the bracket is induced by the usual bracket of vector fields, and the anchor
\[
\anchor:{\textstyle\frac{TP}{H}}\longrightarrow T\Base
\]
is induced by $d\pi$.  This is the \emph{Atiyah algebroid} of $P$.

For $V\in\mf h$, the corresponding fundamental vector field is
\[
V^\#(p)=\frac{d}{dt}\Big|_0p\cdot\exp(tV),
\qquad p\in P,
\]
and satisfies
\[
dR_h\big(V^\#(p)\big)
=\big(\Ad_{h^{-1}}V\big)^\#(ph),
\qquad p\in P,\ V\in\mf h,\ h\in H.
\]
Thus the adjoint bundle ${\textstyle\frac{P\times\mf h}{H}}$ is identified with the vertical bundle $\vertbundle$, and the Atiyah algebroid fits into the short exact sequence
\[
\begin{tikzcd}
{\textstyle\frac{P\times\mf h}{H}} \ar[r, "\iota", hook] &
{\textstyle\frac{TP}{H}} \ar[r, "d\pi", two heads] &
T\Base.
\end{tikzcd}
\]
A right splitting $\gamma:T\Base\to{\textstyle\frac{TP}{H}}$ is a horizontal lift and determines the noncanonical vector-bundle decomposition
\[
{\textstyle\frac{TP}{H}}
\simeq
\gamma(T\Base)\oplus
\iota\left({\textstyle\frac{P\times\mf h}{H}}\right).
\]
Equivalently, there is a unique left splitting
\[
\omega:{\textstyle\frac{TP}{H}}
\longrightarrow{\textstyle\frac{P\times\mf h}{H}}
\]
characterized by
\[
\iota\circ\omega+\gamma\circ\anchor=\Id.
\]
In the terminology of \cite{mackenzie_general_2005}, $\omega$ is a \emph{connection reform} in ${\textstyle\frac{TP}{H}}$; it corresponds, upon passage to the quotient, to an $H$-equivariant connection form $\widetilde\omega:TP\to P\times\mf h$.  For further details, see \cite[Sections~3.1, 3.2, 5.2 and 5.3]{mackenzie_general_2005}.  The Lie algebroid of the gauge groupoid in Example~\ref{ex:gauge.groupoid} is the Atiyah algebroid.
\end{example}

The following example is important for introducing connections on Lie algebroids. In particular, the vector bundle of derivations on a vector bundle, has itself the structure of a Lie algebroid. 

\begin{example}[Lie algebroid of derivations on a vector bundle]\label{ex:algebroid.of.derivations}
Let $E\to\Base$ be a vector bundle.  A derivation on $E$ is a pair $(S,D)$, where $S\in\Gamma T\Base$ and $D:\Gamma E\to\Gamma E$ is $\mbb R$-linear and satisfies
\[
D(fX)=(S\cdot f)X+fD(X),
\qquad f\in C^\infty(\Base),\ X\in\Gamma E.
\]
The commutator of derivations is
\[
[(S_1,D_1),(S_2,D_2)]
=\big([S_1,S_2]_{T\Base},D_1D_2-D_2D_1\big).
\]
Let $\Der(E)\to\Base$ denote the vector bundle whose sections are the derivations of $E$.  Locally,
\[
\Der(E)|_U\simeq T\Base|_U\oplus\End(E)|_U.
\]
It is a transitive Lie algebroid, with bracket given by the commutator, anchor $(S,D)\mapsto S$, and short exact sequence
\[
\begin{tikzcd}
\End(E) \ar[r, "\iota", hook] &
\Der(E) \ar[r, "\anchor", two heads] &
T\Base.
\end{tikzcd}
\]
A linear connection $\nabla$ on $E$ is equivalently a right splitting
\[
\nabla:T\Base\longrightarrow\Der(E),
\qquad S\longmapsto(S,\nabla_S),
\]
and hence yields the noncanonical vector-bundle decomposition
\[
\Der(E)\simeq\nabla(T\Base)\oplus\iota\big(\End(E)\big).
\]
Thus $\Der(E)$ plays for $E$ the role played by $\End(V)$ for a vector space $V$; in particular, one may form $\Der(\Aroid)$ for any Lie algebroid $\Aroid$.  The Lie algebroid of the general linear groupoid $\GL(E)$ from Example~\ref{ex.GL(E)} is isomorphic to $\Der(E)$; see \cite[Theorem~3.6.6]{mackenzie_general_2005}.
\end{example}

\subsection{Connections and curvature of anchor-preserving maps}
We now turn to curvature and connections on Lie algebroids.  Following Definition~\ref{def:lie.algebroid.morphism}, we consider bundle maps between Lie algebroids over $\Base$ that preserve the anchor, but not necessarily the bracket.  Their failure to preserve the bracket defines an intrinsic curvature that includes the classical curvature of linear and principal connections.
\begin{definition}\label{def:curvature}
Let $\Aroid_1$ and $\Aroid_2$ be Lie algebroids over $\Base$.  A vector bundle morphism $\varphi:\Aroid_1\to\Aroid_2$ is \emph{anchor-preserving} if
\[
\anchor_2\circ\varphi=\anchor_1,
\qquad
\begin{tikzcd}
\Aroid_1 \ar[r, "\varphi"] \ar[rd, "\anchor_1", swap] &
\Aroid_2 \ar[d, "\anchor_2"] \\
&T\Base.
\end{tikzcd}
\]
The \emph{curvature} of an anchor-preserving map $\varphi$ is the skew-symmetric, $C^\infty(\Base)$-bilinear map
\[
\curv^\varphi:\Aroid_1\oplus\Aroid_1
\longrightarrow\iota_2(\vertbundle_2)\subset\Aroid_2
\]
given by
\[
\curv^\varphi(X,Y)
=\varphi\circ[X,Y]_{\Aroid_1}
-[\varphi(X),\varphi(Y)]_{\Aroid_2},
\qquad X,Y\in\Gamma\Aroid_1.
\]
\end{definition}
\begin{remark}
The Leibniz identity and the anchor-preserving condition imply that the displayed expression is $C^\infty(\Base)$-bilinear.  Moreover,
\begin{align*}
\anchor_2\circ\curv^\varphi(X,Y)
&=\anchor_1\circ[X,Y]_{\Aroid_1}
-[\anchor_2\circ\varphi(X),\anchor_2\circ\varphi(Y)]_{T\Base}\\
&=[\anchor_1X,\anchor_1Y]_{T\Base}
-[\anchor_1X,\anchor_1Y]_{T\Base}=0,
\end{align*}
for $X,Y\in\Gamma\Aroid_1$.  Hence $\curv^\varphi$ defines a bundle map with values in $\iota_2(\vertbundle_2)$.
\end{remark}

Thus an anchor-preserving map $\varphi:\Aroid_1\to\Aroid_2$ is a morphism of Lie algebroids if and only if $\curv^\varphi\equiv0$.

Curvature behaves naturally under composition of anchor-preserving maps.
\begin{lemma}\label{lem:comp.curv}
Let $\Aroid_1$, $\Aroid_2$, and $\Aroid_3$ be Lie algebroids over $\Base$, and let
\[
\varphi_1:\Aroid_1\longrightarrow\Aroid_2,
\qquad
\varphi_2:\Aroid_2\longrightarrow\Aroid_3
\]
be anchor-preserving.  For $\varphi=\varphi_2\circ\varphi_1$, one has
\[
\curv^\varphi
=\varphi_1^*\curv^{\varphi_2}
+\varphi_2\circ\curv^{\varphi_1}.
\]
\end{lemma}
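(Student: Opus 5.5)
The identity follows by adding and subtracting one intermediate term in the definition of curvature. The meaning of the two summands is fixed first. The pullback is $(\varphi_1^*\curv^{\varphi_2})(X,Y)=\curv^{\varphi_2}(\varphi_1X,\varphi_1Y)$. The second summand is $\varphi_2\circ\curv^{\varphi_1}$, which is the composite of $\varphi_2$ with a map taking values in $\iota_2(\vertbundle_2)\subset\Aroid_2$.

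First, $\varphi=\varphi_2\circ\varphi_1$ is anchor-preserving, since $\anchor_3\circ\varphi_2\circ\varphi_1=\anchor_2\circ\varphi_1=\anchor_1$. Hence $\curv^\varphi$ is defined by Definition~\ref{def:curvature}. For $X,Y\in\Gamma\Aroid_1$ we write
\[
\curv^\varphi(X,Y)=\varphi_2\varphi_1[X,Y]_{\Aroid_1}-[\varphi_2\varphi_1X,\varphi_2\varphi_1Y]_{\Aroid_3}.
\]
We then insert the term $\varphi_2[\varphi_1X,\varphi_1Y]_{\Aroid_2}$ with both signs. This splits the expression as
\[
\varphi_2\bigl(\varphi_1[X,Y]_{\Aroid_1}-[\varphi_1X,\varphi_1Y]_{\Aroid_2}\bigr)
+\bigl(\varphi_2[\varphi_1X,\varphi_1Y]_{\Aroid_2}-[\varphi_2\varphi_1X,\varphi_2\varphi_1Y]_{\Aroid_3}\bigr).
\]
The first bracketed group is $\curv^{\varphi_1}(X,Y)$, so the first term is $\varphi_2\circ\curv^{\varphi_1}(X,Y)$. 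We have used that $\varphi_2$ is a vector bundle map and therefore acts pointwise on sections. The second group is $\curv^{\varphi_2}(\varphi_1X,\varphi_1Y)$, evaluated on the sections $\varphi_1X,\varphi_1Y\in\Gamma\Aroid_2$. This proves the identity on sections.

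No real obstacle arises. The only point needing care is that the equality is an identity of $C^\infty(\Base)$-bilinear bundle maps. Each of $\curv^\varphi$, $\curv^{\varphi_1}$ and $\curv^{\varphi_2}$ is tensorial by the Remark following Definition~\ref{def:curvature}. Consequently $\varphi_1^*\curv^{\varphi_2}$ and $\varphi_2\circ\curv^{\varphi_1}$ are also tensorial, and an identity between tensors on all sections gives the pointwise identity.

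For consistency, both sides take values in $\iota_3(\vertbundle_3)$. For $\varphi_1^*\curv^{\varphi_2}$ this holds by definition. For the other term, $\anchor_3\varphi_2\curv^{\varphi_1}=\anchor_2\curv^{\varphi_1}=0$, so it also lies in $\iota_3(\vertbundle_3)$.
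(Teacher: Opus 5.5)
Your proposal is correct and follows the same route as the paper: insert $\pm\varphi_2[\varphi_1X,\varphi_1Y]_{\Aroid_2}$ into the definition of $\curv^{\varphi}$ and regroup the four terms into $\varphi_2\circ\curv^{\varphi_1}(X,Y)$ and $\curv^{\varphi_2}(\varphi_1X,\varphi_1Y)$. Your extra checks (that the composite is anchor-preserving, tensoriality, and that both terms take values in $\iota_3(\vertbundle_3)$) are also correct; the paper leaves them implicit.
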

\begin{proof}
\begin{multline*}
\curv^{\varphi}(X, Y) = \varphi_2\circ \varphi_1\circ[X, Y]_{1} - [\varphi_2\circ \varphi_1 (X), \varphi_2\circ \varphi_1(Y)]_3 \\
= \varphi_2\circ \varphi_1\circ[X, Y]_{1} -\varphi_2\circ [\varphi_1(X), \varphi_1(Y)]_2 + \varphi_2\circ [\varphi_1(X), \varphi_1(Y)]_2    - [\varphi_2\circ \varphi_1 (X), \varphi_2\circ \varphi_1(Y)]_3 \\
= \varphi_2 \circ \curv^{\varphi_1}(X,Y) +  (\varphi_1^*\curv^{ \varphi_2})(X,Y).
\end{multline*}
\end{proof}

\begin{example}[Splitting of a Lie algebroid]\label{ex:splitting}
Consider a transitive Lie algebroid $\Aroid$, and a right splitting $\gamma:T\Base \to \Aroid$ of its short exact sequence
\[
\begin{tikzcd}
\vertbundle \ar[r, "\iota", hook] &
\Aroid \ar[r, "\anchor", two heads] &
T\Base \ar[l, "\gamma", bend left = 30, pos = 0.45].
\end{tikzcd}
\]
Then $\gamma$ is anchor-preserving, since $\anchor\circ\gamma=\Id_{T\Base}=\anchor_{T\Base}$.  Such a splitting is a horizontal lift in $\Aroid$ and yields the vector-bundle decomposition
\[
\Aroid\simeq\gamma(T\Base)\oplus\iota(\vertbundle).
\]
Its curvature measures the obstruction to $\gamma$ being a morphism of Lie algebroids and is given by
\[
\curv^\gamma(S_1,S_2)
=\gamma\circ[S_1,S_2]_{T\Base}
-[\gamma(S_1),\gamma(S_2)]_{\Aroid},
\qquad S_1,S_2\in\Gamma(T\Base).
\]
Given a right splitting $\gamma:T\Base \to \Aroid$, there exists a unique left splitting $\omega:\Aroid \to \vertbundle$ such that 
\[
\iota \circ \omega + \gamma \circ \anchor = \Id_{\Aroid},
\]
i.e. we have the diagram
\[
\begin{tikzcd}
\vertbundle \ar[r, "\iota", hook] &
\Aroid \ar[r, "\anchor", two heads] \ar[l, "\omega", bend left=35, pos = 0.55] &
T\Base \ar[l, "\gamma", bend left = 35, pos = 0.45].
\end{tikzcd}
\]
Using the terminology of \cite{mackenzie_general_2005}, $\omega$ is referred to as a \emph{connection reform in $\Aroid$}, see also Example~\ref{ex:atiyah.algebroid}. It has the interpretation of a ``vertical projection'', as it maps $X\in \Aroid$ into the vertical bundle $\vertbundle$. The curvature of $\gamma$ can be written by using $\omega$ as 
\begin{equation}\label{eq:gamma.curvature.form.form}
\curv^\gamma(S_1, S_2) = -\iota\circ \omega[\gamma(S_1), \gamma(S_2)]_{\Aroid},
\end{equation}
which is minus the vertical component of the bracket between horizontal elements. 
Note that $\omega$ is not an anchor-preserving map, as $\anchor_{(\vertbundle)} = \anchor \circ \iota = 0$, so $ \anchor_{(\vertbundle)}\circ \omega \neq \anchor$. In this sense, a right splitting $\gamma:T\Base \to \Aroid$ is more natural to work with in the context of Lie algebroids than a left splitting $\omega:\Aroid \to \vertbundle$.  

\end{example}
\begin{example}\label{rem:curvature.form}
Let $\Aroid={\textstyle\frac{TP}{H}}$ be the Atiyah algebroid of a principal bundle $P(\Base,H,\pi)$, as in Example~\ref{ex:atiyah.algebroid}.  The connection reform
\[
\omega:{\textstyle\frac{TP}{H}}\longrightarrow
{\textstyle\frac{P\times\mf h}{H}}
\]
corresponds to an $H$-equivariant connection form $\widetilde\omega:TP\to P\times\mf h$.  Under this correspondence, the curvature $\curv^\gamma$ in equation~\eqref{eq:gamma.curvature.form.form} is induced by the classical curvature form
\[
\Omega(X_1,X_2)
=-\widetilde\omega[X_1^{\mathrm{hor}},X_2^{\mathrm{hor}}]_{TP}.
\]
Indeed, $\Omega$ is horizontal and $H$-equivariant, and therefore descends to $\curv^\gamma$; see \cite[Section~5.3]{mackenzie_general_2005}.
\end{example}

We now define connections in the setting of Lie algebroids, deviating slightly from the definition of linear connections on a vector bundle. 
Recall from Example~\ref{ex:algebroid.of.derivations} the Lie algebroid $\Der(E)$ of derivations on a vector bundle $E$ over $\Base $. The following definition is taken from \cite[Lecture 2]{fernandes_lectures_2011}.
\begin{definition}\label{def:linear.A.con}
Let $\Aroid$ be a Lie algebroid over $\Base$, and let $E\to\Base$ be a smooth vector bundle. An \emph{$\Aroid$-connection on $E$} is an anchor-preserving map
$ \nabla: \Aroid \to  \Der(E).$ \\
Equivalently, an \emph{$\Aroid$-connection on $E$} is a map 
\[ 
\nabla :  \Aroid \times \Gamma E \to \Gamma E,  
\] 
with the properties that for all $f,g\in C^\infty(\Base)$; $X,Y\in \Aroid$; $s, t \in \Gamma E$,
\begin{enumerate}
\item $\nabla _{(fX + gY)} s = f\nabla_X s + g\nabla_Y s,$
\item $\nabla _X (fs) =  (\anchor X\cdot f) s + f\nabla_X s ,$
\item $\nabla_X(s + t) = \nabla_X s + \nabla_X t.$
\end{enumerate}
\end{definition}
An $\Aroid$-connection $\nabla:\Aroid \to \Der(E)$ on a vector bundle $E$, is called a \emph{representation of $\Aroid$ on $E$} if it is a Lie algebroid morphism, i.e. if it additionally is a homomorphism of the brackets, so that $\curv^{\nabla} =0$. 

For the tangent bundle $T\Base$ of a manifold $\Base$, the above definitions of a connection and of curvature coincide with the usual definitions.
\begin{example}\label{ex:std.curvature}
For the Lie algebroid $T\Base$, with identity anchor and the usual bracket of vector fields, a $T\Base$-connection on $T\Base$ is precisely a linear connection. Its curvature in the sense of Definition~\ref{def:curvature} is
\[
\curv^\nabla(X,Y)
=\nabla_{[X,Y]_{T\Base}}-\nabla_X\nabla_Y+\nabla_Y\nabla_X,
\qquad X,Y\in\Gamma T\Base,
\]
which is the usual curvature, with our sign convention.
\end{example}

\begin{remark}\label{rem:TM.connection}
In \cite{mackenzie_general_2005}, Mackenzie defines a \emph{Lie algebroid connection} on a transitive Lie algebroid
\[
\begin{tikzcd}
\vertbundle \ar[r, "\iota", hook] &
\Aroid \ar[r, "\anchor", two heads] &
T\Base
\end{tikzcd}
\]
to be a right splitting $\gamma:T\Base \to \Aroid$ of the anchor. In particular, for the transitive Lie algebroid of derivations on $\Aroid$
\[
\begin{tikzcd}
\End(\Aroid) \ar[r, hook] &
\Der(\Aroid) \ar[r, two heads, "\anchor_{\Der}"] &
T\Base \ar[l, "\bar{\nabla}", bend left=20, pos=0.55],
\end{tikzcd}
\]
a right splitting $\bar{\nabla}:T\Base \to \Der(\Aroid)$ is, in Mackenzie's terminology, a ``Lie algebroid connection in $\Der(\Aroid)$''. In the terminology of \cite{fernandes_lectures_2011} and Definition~\ref{def:linear.A.con}, however, such a splitting is instead called a $T\Base$-connection on $\Aroid$. 
\end{remark}

\section{Riemannian Lie algebroids and sectional curvature}\label{sec:riemannian.struc.aroids}
A Lie algebroid is, in particular, a vector bundle over a manifold. Equipping it with a smoothly varying inner product---also called a \emph{bundle metric}---provides norms, angles, and horizontal lifts, which are fundamental for studying geodesics, variational principles, and physical models on manifolds. Unlike the tangent bundle, however, an arbitrary vector bundle has no natural bracket of sections and hence no notion of torsion; consequently, a bundle metric alone does not determine a canonical metric connection. The Lie algebroid structure supplies the bracket needed to construct a distinguished connection.

\subsection{Metrics and Levi-Civita connections}
For the rest of this section, let $\Aroid$ be a Lie algebroid over $\Base$, equipped with a smoothly varying inner product $\ip{\cdot}{\cdot}(x)$. This will be called a \emph{Riemannian structure on $\Aroid$} or \emph{a metric on $\Aroid$}. If $\Aroid$ is a Riemannian transitive Lie algebroid there is a canonical right splitting $\gamma$ of the short exact sequence 
\begin{equation}\label{eq:can.splitting}
\begin{tikzcd}
\vertbundle \ar[r, "\iota", hook] & \Aroid \ar[r, "\anchor", two heads]  &T\Base, \ar[l, "\gamma",bend left =25, pos = 0.45]
\end{tikzcd}
\end{equation}
and hence induces the orthogonal vector-bundle decomposition
$$\Aroid \simeq  (\vertbundle)^{\perp} \oplus_{\perp} \vertbundle \simeq \gamma(T\Base) \oplus_{\perp} \iota(\vertbundle) .$$
The vector $\gamma(S) = X$ is the unique $X\in \Aroid$, such that $X \perp \vertbundle$ and $\anchor X= S$.
\begin{definition}\label{def:hor.vert}
In the presence of a right splitting $\gamma$, its image $\gamma(T\Base) \subset \Aroid$ will be called the \emph{horizontal bundle}.
\end{definition}

We define compatibility between an $\Aroid$-connection $\nabla:\Aroid \to \Der(\Aroid)$ and a Riemannian structure on $\Aroid$.x
\begin{definition}\label{def:metric.compatibility}
A derivation $D$ on $\Aroid$ is \emph{metric} if for all $Y,Z \in \Gamma\Aroid$
\[  (\anchor_{\Der} D )\ip{Y}{Z}  = \ip{D\, Y}{Z} + \ip{Y}{D\, Z}.   \]
An $\Aroid$-connection $\nabla$ on $\Aroid$ is \emph{metric} if for all ${X,Y, Z \in \Gamma \Aroid }$ 
\[ \anchor X \ip{Y}{Z}  = \ip{\nabla_X Y}{Z} + \ip{Y}{\nabla_X Z}. \]
\end{definition}
\begin{lemma}[Corollary 3.6.11 in \cite{mackenzie_general_2005}]\label{lem:metric.derivations}
The collection of metric derivations is a subalgebroid of $\Der(\Aroid)$ called the \emph{Lie algebroid of metric derivations on $\Aroid$} and denoted by $\Der_{\mf{so}}(\Aroid)$. Moreover, the vertical bundle in  $\Der_{\mf{so}}(\Aroid)$---which will be denoted by $\mf{so}(\Aroid)$---consists of endomorphisms satisfying
\[  \ip{D\, Y}{Z} + \ip{Y}{D\, Z} = 0, \]
i.e. skew-symmetric endomorphisms with respect to the inner product $\ip{\cdot}{\cdot}$. Hence, the short exact sequence of $\Der_{\so}(\Aroid)$ is
\begin{equation*}
\begin{tikzcd}
\mf{so}(\Aroid) \ar[r, "\iota_{\Der}", hook] & \Der_{\mf{so}}(\Aroid) \ar[r, "\anchor_{\Der}", two heads]  &T\Base.
\end{tikzcd}
\end{equation*}
\end{lemma}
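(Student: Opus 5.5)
Two things need checking for Lemma~\ref{lem:metric.derivations}: that metric derivations form a vector subbundle $\Der_{\so}(\Aroid)\subset\Der(\Aroid)$ whose sections are closed under the commutator bracket, and that the kernel of the restricted anchor is $\so(\Aroid)$ with $\anchor_{\Der}$ still surjective. The plan is to check closure under the bracket directly, identify the kernel, and produce a metric splitting to get surjectivity.

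\textbf{Bracket closure.} Let $D_1,D_2$ be metric derivations with anchors $S_1,S_2$. Expand $S_1S_2\ip{Y}{Z}$ by applying the metric identity twice:
\[
S_1S_2\ip{Y}{Z}=\ip{D_1D_2Y}{Z}+\ip{D_2Y}{D_1Z}+\ip{D_1Y}{D_2Z}+\ip{Y}{D_1D_2Z}.
\]
Do the same for $S_2S_1\ip{Y}{Z}$ and subtract. The mixed terms cancel, leaving
\[
[S_1,S_2]\ip{Y}{Z}=\ip{[D_1,D_2]Y}{Z}+\ip{Y}{[D_1,D_2]Z}.
\]
So the commutator $[D_1,D_2]$, which has anchor $[S_1,S_2]$, is again metric. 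The metric condition is $\mbb R$-linear in $D$ and $C^\infty(\Base)$-linear (for $fD$ both sides scale by $f$), so metric derivations are a $C^\infty(\Base)$-submodule closed under the bracket.

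\textbf{Vertical part.} If $\anchor_{\Der}D=0$, then $D$ is $C^\infty(\Base)$-linear, i.e.\ an element of $\End(\Aroid)$. The metric condition then reduces to $\ip{DY}{Z}+\ip{Y}{DZ}=0$, which is skew-symmetry, so the kernel is $\so(\Aroid)$. This is a smooth subbundle of $\End(\Aroid)$ of constant rank $r(r-1)/2$, where $r=\rank\Aroid$.

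\textbf{Surjectivity and subbundle structure.} This is the main step. Choose any metric linear connection $\bar\nabla$ on $\Aroid$; one exists by a partition of unity, for example by gluing flat connections for local orthonormal frames. Then $S\mapsto(S,\bar\nabla_S)$ is a right splitting $T\Base\to\Der(\Aroid)$ that lands in the metric derivations. Any metric derivation $D$ with anchor $S$ satisfies $D-\bar\nabla_S\in\so(\Aroid)$, since the difference is metric with zero anchor.

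Hence, as vector bundles,
\[
\Der_{\so}(\Aroid)=\bar\nabla(T\Base)\oplus\so(\Aroid),
\]
a smooth subbundle of constant rank whose sections are exactly the metric derivations. Combining this with bracket closure shows $\Der_{\so}(\Aroid)$ is a transitive subalgebroid with the stated short exact sequence.
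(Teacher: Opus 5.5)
Your proposal is correct, and it necessarily takes a different route from the paper, which gives no proof of this lemma. The paper imports the result from Mackenzie (Corollary 3.6.11). Afterwards it only records the local form $\Der_{\so}(\Aroid)|_U\simeq T\Base|_U\oplus\so(\Aroid)|_U$ obtained from a local orthonormal frame.

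Your argument verifies the statement directly at the infinitesimal level:
\begin{itemize}
\item bracket closure follows from the commutator computation;
\item the kernel is identified through $C^\infty(\Base)$-linearity;
\item transitivity and the subbundle structure come from one global metric connection $\bar\nabla$.
\end{itemize}
In effect this globalizes the paper's local remark, since your $\bar\nabla$ is glued from exactly the flat connections of those orthonormal frames.

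In Mackenzie the result is placed after the identification $\Der(E)\cong A(\GL(E))$, so the natural argument there runs on the groupoid side. The fibrewise isometries form a locally trivial Lie subgroupoid of $\GL(\Aroid)$, namely the gauge groupoid of the orthonormal frame bundle. Its Lie algebroid is automatically a transitive Lie subalgebroid of $\Der(\Aroid)$. One then identifies it with the metric derivations by differentiating curves of isometries and comparing ranks.

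The two routes buy different things:
\begin{itemize}
\item The groupoid route gives integrability of $\Der_{\so}(\Aroid)$ for free and matches the paper's groupoid viewpoint, but it relies on Lie-theoretic machinery.
\item Your route is self-contained. It makes explicit that metric linear connections are exactly the right splittings of $\Der_{\so}(\Aroid)$, which is the viewpoint used later in Remark~\ref{rem:metric.derivations.splitting}. It also isolates the only non-formal input: the existence of a single metric connection.
\end{itemize}

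One caveat: your rank count and partition-of-unity construction are finite-dimensional, as is Mackenzie's statement.
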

Let $\set{X_j}\subset\Gamma_U\Aroid$ be a local orthonormal frame over a trivializing neighbourhood $U\subset\Base$. Using the metric identification $\Aroid\simeq\Aroid^*$, set
\begin{equation}\label{eq:metric.wedge}
X^k = \ip{\cdot}{X_k},\qquad X_k \wedge X^l = X_k \otimes X^l - X_l \otimes X^k
\end{equation}

so that $X_k\wedge X^l=-X_l\wedge X^k$. These endomorphisms span $\mf{so}(\Aroid)$ locally, and hence
\[
\Der_{\so}(\Aroid)|_U\simeq T\Base|_U\oplus\so(\Aroid)|_U,
\qquad
D=S\oplus\ChEnd
=S\oplus\sum_{j<k}\ChEnd_{\,j}^{\,k}X_k\wedge X^j.
\]

\begin{definition}\label{def:torsion}
The \emph{torsion} of an $\Aroid$-connection $\nabla$ on $\Aroid$ is the skew-symmetric, $C^\infty(\Base)$-bilinear map
\[T^\nabla:  \Gamma\Aroid \times \Gamma\Aroid \to \Gamma\Aroid, \quad (X,Y)  \mapsto  \nabla_X Y - \nabla_Y X - [X, Y]_{\Aroid}.  \]
Since the map is $C^\infty$-bilinear, it defines $T^\nabla:\Aroid \oplus \Aroid \to \Aroid$. 
An $\Aroid$-connection on $\Aroid$ is \emph{torsion-free} if $T^\nabla \equiv 0$.
\end{definition}

\begin{theorem}[Lecture 2 in \cite{fernandes_lectures_2011}]\label{th:levi.civita.con}
Let $\Aroid$ be a finite-dimensional Lie algebroid with a Riemannian structure $\ip{\cdot}{\cdot}$. Then there exists a unique $\Aroid$-connection $\nabla$ on $\Aroid$, called the \emph{Levi-Civita connection on $\Aroid$}, which is metric and torsion-free,
\begin{equation*}
\begin{tikzcd}
\Der_{\mf{so}}(\Aroid) \ar[rd, "\anchor_{\Der}", two heads] & \\
\Aroid \ar[r, "\anchor"] \ar[u, "\nabla", dashed]  &T\Base.
\end{tikzcd}
\end{equation*}
The connection is given by the usual Koszul formula
\begin{equation*}
2\ip{ \nabla_X Y}{Z } =
\anchor X\ip{ Y}{Z} + \anchor Y\ip{ Z}{X} - \anchor Z\ip{ X}{Y}
+ \ip{ [X,Y]}{Z} - \ip{ [Y,Z]}{X} + \ip{ [Z,X]}{Y}.
\end{equation*}

\end{theorem}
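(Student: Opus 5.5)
The plan is the classical Levi-Civita argument: derive the Koszul formula from the two defining properties to get uniqueness, then use the formula as a definition and verify its properties to get existence. The only new ingredient is that the Lie algebroid bracket is not $C^\infty(\Base)$-linear. Its failure is controlled by the Leibniz rule \eqref{eq:bracket.deriv} and the anchor compatibility \eqref{eq:anchor.morphism}.

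\emph{Uniqueness.} Suppose $\nabla$ is metric and torsion-free in the sense of Definitions~\ref{def:metric.compatibility} and~\ref{def:torsion}. Write the metric identity for the three cyclic permutations,
\[
\anchor X\ip{Y}{Z},\qquad \anchor Y\ip{Z}{X},\qquad \anchor Z\ip{X}{Y}.
\]
Add the first two and subtract the third. Then use torsion-freeness, $\nabla_XY-\nabla_YX=[X,Y]_\Aroid$, to eliminate every term except $2\ip{\nabla_XY}{Z}$. This produces exactly the displayed Koszul formula. Since the metric is nondegenerate fibrewise, the formula determines $\nabla_XY$, which gives uniqueness.

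\emph{Existence.} Let $K(X,Y,Z)$ denote the right-hand side of the Koszul formula. I will show that for fixed $X,Y$ it is $C^\infty(\Base)$-linear in $Z$. The nonlinear terms that arise when replacing $Z$ by $fZ$ are
\begin{itemize}
\item $(\anchor Y f)\ip{Z}{X}$ from $\anchor Y\ip{fZ}{X}$,
\item $-(\anchor Xf)\ip{Y}{Z}$ from $-\ip{[Y,fZ]}{X}$, after using $[fZ,X]=-[X,fZ]$,
\item $(\anchor X f)\ip{Y}{Z}$ from $\ip{[fZ,X]}{Y}$,
\item $-(\anchor Yf)\ip{Z}{X}$ from $-\ip{[Y,fZ]}{X}$.
\end{itemize}
These cancel in pairs. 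By finite rank and nondegeneracy, $K(X,Y,\cdot)$ is then represented by a unique section $2\nabla_XY$.

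Next I check the connection axioms of Definition~\ref{def:linear.A.con}. Linearity over $C^\infty(\Base)$ in $X$ follows the same pattern as the $Z$ computation. For the Leibniz rule in $Y$, replacing $Y$ by $fY$ leaves the extra terms
\[
(\anchor Xf)\ip{Y}{Z} - (\anchor Zf)\ip{X}{Y} + (\anchor Xf)\ip{Y}{Z} + (\anchor Zf)\ip{Y}{X},
\]
whose total is $2(\anchor Xf)\ip{Y}{Z}$, as required. Additivity is clear.

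Finally I verify the two defining properties, both directly from $K$:
\begin{itemize}
\item $K(X,Y,Z)+K(X,Z,Y)=2\anchor X\ip{Y}{Z}$, because the bracket terms cancel by antisymmetry. This gives metric compatibility, so $\nabla$ takes values in $\Der_{\so}(\Aroid)$ as in Lemma~\ref{lem:metric.derivations}.
\item $K(X,Y,Z)-K(Y,X,Z)=2\ip{[X,Y]}{Z}$. This gives torsion-freeness.
\end{itemize}
Anchor preservation of $\nabla:\Aroid\to\Der(\Aroid)$ is exactly the Leibniz rule in $Y$.

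The main obstacle is the sign bookkeeping in the $C^\infty$-linearity check in $Z$ and the Leibniz check in $Y$, where \eqref{eq:bracket.deriv} must be applied with the correct slot ordering. Jacobi and \eqref{eq:anchor.morphism} are not needed. Finite rank is used only to pass from the functional $K(X,Y,\cdot)$ to the vector $\nabla_XY$.
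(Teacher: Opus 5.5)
Your proof is correct and follows the same route as the paper. The paper derives the Koszul formula from metric compatibility and torsion-freeness to get uniqueness, and says existence is proved ``in local coordinates''; you instead prove existence globally, by checking $C^\infty(\Base)$-linearity of the right-hand side in $Z$ and using nondegeneracy, which is an equivalent standard variant that avoids patching local definitions via uniqueness. One bookkeeping slip: in your $Z$-linearity list, $(\anchor X f)\ip{Y}{Z}$ comes from $\anchor X\ip{Y}{fZ}$ and $-(\anchor X f)\ip{Y}{Z}$ comes from $\ip{[fZ,X]}{Y}$, not from $-\ip{[Y,fZ]}{X}$ as you wrote, although the four terms you list are correct and do cancel.
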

\begin{proof}
The Koszul formula follows from the usual algebraic manipulations involving the metric compatibility condition and the torsion-free condition. Existence is proved in local coordinates as in the case when the Lie algebroid is the tangent bundle.
\end{proof}

For an infinite-dimensional Lie algebroid, the same conclusion holds provided that the right-hand side of the Koszul formula is represented, for every $X,Y\in\Gamma\Aroid$, by a smooth section $\nabla_XY$ depending smoothly on $X$ and $Y$. Nondegeneracy of the metric implies uniqueness of such a section, but for a weak metric it does not imply its existence. Thus, in the infinite-dimensional setting, existence of the Levi-Civita connection must be verified in each application.

\begin{remark}\label{rem:metric.derivations.splitting}
For a transitive Lie algebroid with a Riemannian structure
\[
\begin{tikzcd}
\vertbundle \ar[r, "\iota", hook] & \Aroid \ar[r, "\anchor", two heads]  &T\Base, \ar[l, "\gamma",bend left =25, pos = 0.45]
\end{tikzcd}
\]
the composition $\begin{tikzcd}
\nabla\circ \gamma:T\Base \ar[r, "\gamma"] & 
\Aroid \ar[r, "\nabla"] &
\Der_{\so}(\Aroid)
\end{tikzcd}$ 
induces a right splitting of the short exact sequence 
\begin{equation}\label{eq:deriv.splitting}
\begin{tikzcd}
\mf{so}(\Aroid) \ar[r, "\iota_{\Der}", hook] & \Der_{\mf{so}}(\Aroid) \ar[r, "\anchor_{\Der}", two heads]  &T\Base, \ar[l, "\nabla \circ\gamma",bend left , pos = 0.55, dashed] 
\end{tikzcd}
\end{equation}
and $\nabla\circ\gamma$ is called a metric $T\Base$-connection on $\Aroid$, see Remark \ref{rem:TM.connection}. This is a horizontal lift from $T\Base$ to $\Der_{\so}(\Aroid)$, as it gives a horizontal complement to the vertical bundle $\so(\Aroid) = \Ker \anchor_{\Der}$, and is what is commonly known as a ``linear connection'' on the vector bundle $\Aroid$ over $\Base$, see e.g. \cite{kobayashi_foundations_1963}. Such a splitting does not give information on how to associate a vector in the vertical bundle $\vertbundle$ to a derivation on $\Aroid$, in contrast to the Levi-Civita connection $\nabla:\Aroid \to \Der_{\mf{so}}(\Aroid)$. 
\end{remark}

\begin{remark}\label{rem:curv.levi-civita}
By Definition~\ref{def:curvature} the curvature of the Levi-Civita connection on $\Aroid$
is for $X,Y \in \Gamma \Aroid$ given by
\[
\curv^{\nabla}(X,Y) = \nabla_{[X,Y]_{\Aroid}} - \nabla_X\nabla_Y + \nabla_Y\nabla_X.
\]
\end{remark}

\begin{example}
Let $(\Base, g)$ be a Riemannian manifold. By example~\ref{ex:aroids}.\ref{ex:tangent.bundle}., $T\Base$ is a Lie algebroid. The Levi-Civita connection on $T\Base$ from Theorem~\ref{th:levi.civita.con} is the standard Levi-Civita connection on $(\Base, g)$. The Riemannian curvature of $\Base$ in the classical sense, coincides with the curvature of the Levi-Civita connection on $T\Base$,
\[\curv^{\Base} = \curv^\nabla : T\Base \oplus T\Base \to \iota_{\Der}(\so(T\Base)).  \]
\end{example}

\subsection{Source-fibre metrics and isotropy}\label{sec:relation.between.riemannian.structures}\label{sec:Levi.civita.on.vertical}
Let $\Aroid$ be the Lie algebroid of $\Groid\rightrightarrows\Base$.  A metric on $\Aroid$ extends uniquely, by right translation, to a metric on $T^{\src}\Groid$: for $g\in\Groid$ with $\trg(g)=y$ and $\widetilde X,\widetilde Y\in T_g^{\src}\Groid$, set
\[
\ip{\widetilde X}{\widetilde Y}_{\Groid}(g)
=\ip{dR_{g^{-1}}\widetilde X}{dR_{g^{-1}}\widetilde Y}_{\Aroid}(y).
\]
Conversely, a right-invariant metric on $T^{\src}\Groid$ restricts along the unit section to a metric on $\Aroid$, and these constructions are inverse.  Since the metric and bracket of right-invariant source-tangent fields are determined at the units, their Levi-Civita connection and curvature are likewise determined by the Levi-Civita connection and curvature of $\Aroid$.

\begin{example}[Riemannian structure on principal bundles]\label{rem:atiyah.correspondence}
Suppose that $(\src,\trg):\Groid\to\Base\times\Base$ is a surjective submersion and fix $x\in\Base$.  Then $P=\Groid_x$ is a principal $H$-bundle over $\Base$, where $H=\Groid_x^x$, with Lie algebra $\mf h$, and the bundle projection is $\pi=\trg|_P$.  As recalled in Examples~\ref{ex:gauge.groupoid} and~\ref{ex:atiyah.algebroid}, the Lie algebroid $\Aroid$ of $\Groid$ is isomorphic to the transitive Atiyah algebroid in the short exact sequence
\[
\begin{tikzcd}
{\textstyle\frac{P\times\mf h}{H}} \ar[r,"\iota",hook] &
{\textstyle\frac{TP}{H}} \ar[r,"d\pi",two heads] &
T\Base .
\end{tikzcd}
\]
Under this identification, the preceding correspondence identifies a Riemannian metric on $\Aroid\simeq{\textstyle\frac{TP}{H}}$ with an $H$-invariant Riemannian metric on the principal bundle $P$.
\end{example}

Let $\Groid\rightrightarrows\Base$ be a transitive Lie groupoid with a right-invariant metric, and let $\Aroid$ be its Riemannian Lie algebroid.  The isotropy groups $H_x=\src^{-1}(x)\cap\trg^{-1}(x)$ form a smooth bundle of Lie groups, whose Lie algebra bundle is the vertical bundle $\vertbundle$.  The bracket and metric on $\vertbundle$ vary smoothly with $x$; hence their fibrewise Koszul formulas define a smooth family of Levi-Civita connections on the groups $H_x$.  The following proposition identifies this family with the Levi-Civita connection of the Riemannian Lie algebroid $\vertbundle$.
\begin{proposition}\label{prop:adbundle.levi.civita}
For every $x\in\Base$, the Levi-Civita connection $\widehat{\nabla}$ on $\vertbundle$, restricted to $\vertbundle_x=\mf h_x$, agrees with the Levi-Civita connection of $H_x$ at the identity, evaluated on right-invariant vector fields.  Under the identification $T_{\Id}H_x\simeq\mf h_x$, one likewise has
\[
\curv^{\widehat{\nabla}}|_x=\curv^{H_x}|_{\Id}.
\]
\end{proposition}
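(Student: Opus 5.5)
The plan is to compare the Koszul formula of Theorem~\ref{th:levi.civita.con} on the Riemannian Lie algebroid $\vertbundle$ with the Lie-group Koszul formula of Section~\ref{sec:Lie.groups} on each $H_x$.

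\textbf{Step 1: the formula on $\vertbundle$.} First record that $\vertbundle$ is a Lie algebroid with zero anchor, whose bracket is the restriction of $[\cdot,\cdot]_\Aroid$ and is $C^\infty(\Base)$-bilinear. Indeed, by \eqref{eq:bracket.deriv}, for $X,Y\in\Gamma\vertbundle$ we have $[X,fY]_\Aroid=f[X,Y]_\Aroid+(\anchor X\cdot f)Y=f[X,Y]_\Aroid$. With zero anchor, the three derivative terms in the Koszul formula of Theorem~\ref{th:levi.civita.con} vanish, leaving
\[
2\ip{\widehat\nabla_XY}{Z}=\ip{[X,Y]}{Z}-\ip{[Y,Z]}{X}+\ip{[Z,X]}{Y},\qquad X,Y,Z\in\Gamma\vertbundle .
\]
The right-hand side is tensorial in $X,Y,Z$. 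Hence $\widehat\nabla$ exists (it is defined pointwise by this algebraic expression), and $(\widehat\nabla_XY)(x)$ depends only on $X_x,Y_x\in\mf h_x$.

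\textbf{Step 2: identify the bracket and metric on $\mf h_x$.} Next check that the fibre bracket on $\mf h_x=\vertbundle_x$ agrees with the Lie algebra bracket of $H_x$ in the right-invariant convention of Section~\ref{sec:Lie.groups}, and that the metric is the restriction of the right-invariant metric on $H_x\subset\Groid_x$. For $X\in\Gamma\vertbundle$, the right-invariant extension $\widetilde X$ from \eqref{eq:right.invariant.exstension} is tangent to $\trg$-fibres, since $d\trg\,dR_g X=d\trg X=\anchor X=0$. It is also tangent to $\src$-fibres, so it is tangent to $\Groid_x^x=H_x$. Its restriction to $H_x$ is $h\mapsto dR_hX_x$, which is exactly the right-invariant vector field on $H_x$ generated by $X_x$. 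The bracket of two such fields restricted to the submanifold $H_x$ equals the bracket of the restrictions. Therefore $[X,Y]_\Aroid(x)=[\widetilde X|_{H_x},\widetilde Y|_{H_x}](\Id)=[X_x,Y_x]_{\mf h_x}$. The metric statement follows directly from the definition of the right-invariant extension of the metric.

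\textbf{Step 3: compare connections and curvature.} With Steps 1 and 2, the formula of Step 1 at $x$ is literally the group Koszul formula $2\ip{\nabla_XY}{Z}=\ip{[X,Y]}{Z}-\ip{Y}{[X,Z]}-\ip{X}{[Y,Z]}$, rewritten by skew-symmetry of the bracket. Nondegeneracy of the metric then gives $\widehat\nabla_XY(x)=\nabla^{H_x}_{X_x}Y_x$ at the identity, for right-invariant fields. For the curvature, Remark~\ref{rem:curv.levi-civita} gives
\[
\curv^{\widehat\nabla}(X,Y)=\widehat\nabla_{[X,Y]}-\widehat\nabla_X\widehat\nabla_Y+\widehat\nabla_Y\widehat\nabla_X .
\]
Because everything is $C^\infty(\Base)$-linear, we may evaluate on sections that are arbitrary extensions of $X_x,Y_x,Z_x$. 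Composites like $\widehat\nabla_X\widehat\nabla_YZ$ at $x$ only involve the pointwise bilinear operation, so they agree with the group expressions $\nabla_X\nabla_YZ$ for right-invariant fields on $H_x$. These group expressions are again right-invariant, and in the convention of Section~\ref{sec:Lie.groups} they are computed purely in $\mf h_x$. This yields $\curv^{\widehat\nabla}|_x=\curv^{H_x}|_{\Id}$.

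The main obstacle is Step 2. The delicate points are the tangency of $\widetilde X$ to $H_x$ and matching the sign convention of the bracket. For the tangency I would use that $H_x$ is a closed embedded submanifold of $\Groid_x$, namely the fibre of the submersion $\trg|_{\Groid_x}$ in the transitive case. For the sign, I would use that both brackets are defined through right-invariant fields.
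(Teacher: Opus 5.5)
Your proposal is correct and follows essentially the same route as the paper. In both, the zero anchor makes the bracket and $\widehat\nabla$ on $\vertbundle$ tensorial, so the algebroid Koszul formula at $x$ is the right-invariant group Koszul formula on $\mf h_x$, and substituting into the curvature definition gives $\curv^{\widehat\nabla}|_x=\curv^{H_x}|_{\Id}$. Your Step 2 goes further than the paper: it proves, via tangency of right-invariant extensions to the embedded submanifold $H_x$, that the fibre bracket and metric on $\vertbundle_x$ are those of $H_x$, whereas the paper simply assumes this identification in the paragraph preceding the proposition.
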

\begin{proof}
The anchor of $\vertbundle$ vanishes, so its bracket and Levi-Civita connection are $C^\infty(\Base)$-bilinear.  Consequently, the Koszul formula on $\vertbundle$, evaluated at $x$, is precisely the Koszul formula for the right-invariant metric on $H_x$ at the identity.  This proves the assertion for the connections; substitution into the definition of curvature gives the curvature identity.
\end{proof}

An example of the use of Proposition~\ref{prop:adbundle.levi.civita} is given in Section~\ref{ex:RP1}.

\subsection{The metric-adjoint of the adjoint map}
We use the algebroid bracket to give an explicit formula for the Levi-Civita connection on a Lie groupoid with right-invariant metric. First we introduce the \emph{metric-adjoint} of the adjoint map $\ad:\Gamma\Aroid \to \Gamma\Der(\Aroid)$, $X\mapsto [X, \cdot]$.
\begin{definition}\label{def:metric.adjoint}
Let $(\Aroid,\ip{\cdot}{\cdot})$ be a Riemannian Lie algebroid. Suppose that, for every $X,Z\in\Gamma\Aroid$, there exists a smooth section $\ad^\madj_XZ\in\Gamma\Aroid$ satisfying
\[
\ip{Y}{\ad^\madj_XZ}
=\anchor X\cdot\ip{Y}{Z}-\ip{\ad_XY}{Z},
\qquad Y\in\Gamma\Aroid,
\]
and that these sections define a smooth map $\ad^\madj:\Gamma\Aroid\to\Gamma\Der(\Aroid)$. This map is called the \emph{metric-adjoint of the adjoint map}. Equivalently,
\[
\anchor X\cdot\ip{Y}{Z}
=\ip{\ad_XY}{Z}+\ip{Y}{\ad^\madj_XZ}.
\]
For finite-dimensional Lie algebroids its existence is automatic. For infinite-dimensional Lie algebroids with a weak metric, it is an additional condition that must be verified in each application.
\end{definition}

\begin{remark}
Let $G$ be a Lie group with a right-invariant metric $\ip{\cdot}{\cdot}$, and Lie algebra~$\mf g$. The above definition is a Lie algebroid version (up to sign) of the operator $B:\mf g \times \mf g \to \mf g$
\[
\ip{B(c,a)}{b} = \ip{[a,b]}{c}, \quad \text{for }a,b,c \in \mf g,
\]
 popularized by Arnold in \cite{arnold_sur_1966} and \cite{arnold_mathematical_1989}. 
\end{remark}
The metric-adjoint of a map generalizes the (minus) transpose of a linear operator on a vector space into the setting of vector bundles. Moreover, using the musical isomorphism  $\Aroid^*\simeq \Aroid$ induced by the metric $\ip{\cdot}{\cdot}$ on $\Aroid$, the metric-adjoint of the adjoint map, $\ad^\madj$, coincides with the formal Lie algebroid analogue of the ``Lie derivative'', as is made clear by the following proposition.

\begin{proposition}\label{prop:metric.adjoint.properties}
The metric-adjoint of the adjoint map, $\ad^\madj:\Gamma\Aroid \to \Gamma\Der(\Aroid)$ has the following properties
\begin{enumerate}
\item It is a homomorphism of the bracket structures of $\Aroid$ and $\Der(\Aroid)$, i.e.  
\[
\ad^\madj_{[X,Y]} = \ad^\madj_X \ad^\madj_Y - \ad^\madj_Y \ad^\madj_X.
\]
\item $\ad^\madj:\Gamma\Aroid \to \Gamma\Der(\Aroid)$ and $\ad^\madj_X:\Gamma\Aroid \to \Gamma \Aroid$ are both $\mbb R$-linear, and for $f\in C^\infty(\Base)$ and $X,Y,Z\in \Gamma \Aroid$
\[
\begin{gathered}
\ip{\ad^\madj_{X} fY}{Z} = (\anchor X\cdot f)\ip{Y}{Z} + f\ip{\ad^\madj_{X} Y }{Z},\\
\ip{\ad^\madj_{fX} Y}{Z} = (\anchor Z\cdot f)\ip{X}{Y} + f\ip{\ad^\madj_{X} Y }{Z},
\end{gathered}
\]
which shows that $\ad^\madj_X$ is indeed a section of derivations on $\Aroid$.
\item Under the musical isomorphism $\Aroid^* \simeq \Aroid$, the metric-adjoint coincides with the ``Lie derivative'' of a section of $\Aroid^*$. I.e. if $\alpha \in \Gamma \Aroid^*$, such that $\alpha(X)= \ip{X}{\alpha^\sharp} $, then
\[
\ip{\ad^\madj_X\alpha^{\sharp}}{Z}  = (\Lie_X\alpha )(Z).
\]
\item Given an orthonormal frame $\set{X_k}$ for $\Aroid$, such that $\ad_{X_k}$ is given by a matrix $(\ad_{X_k})$, then the corresponding matrix for $\ad^\madj_{X_k}$ is the negative transpose of $(\ad_{X_k})$:
\[
(\ad^\madj_{X_k}) = -(\ad_{X_k})^\trans.
\]
\item\label{lem:metric.adjoint.property.horizontal.ideal} For $X\in \Gamma \Aroid$, the composed map $\ad^\madj_{X}\circ\gamma:\Gamma T\Base \to  \Gamma\Aroid$, takes values in the horizontal distribution, i.e. $\ad^\madj_X\circ  \gamma: \Gamma T\Base \to \Gamma(\gamma(T\Base) )$.
\end{enumerate}
\end{proposition}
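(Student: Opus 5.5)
The plan is to derive all five properties directly from the defining identity
\[
\anchor X\cdot\ip{Y}{Z}=\ip{\ad_XY}{Z}+\ip{Y}{\ad^\madj_XZ},
\]
combined with the Lie algebroid axioms \eqref{eq:bracket.deriv} and \eqref{eq:anchor.morphism}. At the end of each argument I identify sections by testing against an arbitrary $Y\in\Gamma\Aroid$; this is legitimate by nondegeneracy of the metric. In the infinite-dimensional case, existence of $\ad^\madj$ is taken as part of the hypothesis of Definition~\ref{def:metric.adjoint}.

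I would do the easy items first. For (2), $\mbb R$-linearity is immediate from the defining identity. Replacing $Z$ by $fZ$ (for the first formula) and using $\anchor(fX)=f\anchor X$ together with $[fX,Y]=f[X,Y]-(\anchor Y\cdot f)X$ (for the second) gives the two displayed Leibniz-type rules. The first of these shows that $\ad^\madj_X$ is a derivation with symbol $\anchor X$.

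For (3), I would write $(\Lie_X\alpha)(Z)=\anchor X\cdot\alpha(Z)-\alpha([X,Z])$. Substituting $\alpha(\cdot)=\ip{\cdot}{\alpha^\sharp}$, this is exactly the defining identity with $Y$ replaced by $Z$ and $Z$ replaced by $\alpha^\sharp$.

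For (4), in an orthonormal frame the functions $\ip{X_j}{X_l}$ are constant, so the anchor term drops out. This leaves $\ip{X_j}{\ad^\madj_{X_k}X_l}=-\ip{\ad_{X_k}X_j}{X_l}$, which is the negative transpose.

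For (5), take $V\in\Gamma\vertbundle$ and $S\in\Gamma T\Base$. Since $\ip{V}{\gamma S}=0$, the defining identity gives
\[
\ip{V}{\ad^\madj_X\gamma S}=-\ip{[X,\iota V]}{\gamma S}.
\]
By \eqref{eq:anchor.morphism}, $\anchor[X,\iota V]=[\anchor X,0]=0$, so $[X,\iota V]$ is vertical and hence orthogonal to $\gamma S$. Therefore $\ad^\madj_X\gamma S\perp\vertbundle$, i.e.\ it is horizontal.

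The main bookkeeping is (1). I would expand
\[
\ip{Y}{\ad^\madj_X\ad^\madj_{X'}Z}
=\anchor X\bigl(\anchor X'\ip{Y}{Z}-\ip{[X',Y]}{Z}\bigr)-\anchor X'\ip{[X,Y]}{Z}+\ip{[X',[X,Y]]}{Z},
\]
by applying the defining identity twice, and then antisymmetrise in $X,X'$. The mixed first-order terms $-\anchor X\ip{[X',Y]}{Z}$ and $-\anchor X'\ip{[X,Y]}{Z}$ cancel against their swapped counterparts. The second-order terms combine to $[\anchor X,\anchor X']\ip{Y}{Z}=\anchor[X,X']\ip{Y}{Z}$ by \eqref{eq:anchor.morphism}. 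The double brackets combine by Jacobi to $-\ip{[[X,X'],Y]}{Z}$. The total is therefore the right-hand side of the defining identity for $\ad^\madj_{[X,X']}Z$, and nondegeneracy gives (1).

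The only delicate point is tracking signs in this antisymmetrisation. I would check them against the Lie-group case of Section~\ref{sec:Lie.groups}, where $\ad^\madj_X=-\ad_X^\trans$ and (1) reduces to transposing the identity $\ad_{[X,X']}=[\ad_X,\ad_{X'}]$.
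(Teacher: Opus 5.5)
Your proposal is correct and follows the paper's proof essentially step for step. Item (1) applies the defining identity twice, antisymmetrises, and uses \eqref{eq:anchor.morphism} together with Jacobi, with your sign bookkeeping giving $-\ip{[[X,X'],Y]}{Z}$ correctly. Items (2)--(4) are direct substitutions into the defining identity, and (5) uses $\ip{\iota V}{\gamma S}=0$ together with $\anchor[X,\iota V]=0$.
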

\begin{proof}
\begin{enumerate}
\item We use the fact that $\ad$ is a homomorphism, and compute for 
$X,Y,Z,W \in \Gamma\Aroid$
\[ 
\ip{\ad^\madj_{[X,Y]}Z}{W} = \anchor[X,Y]\cdot \ip{Z}{W} - \ip{Z}{\ad_{X}\ad_Y W - \ad_Y\ad_X W}.
  \]
From the other side, 
\begin{align*}
\ip{\ad&^\madj_X \ad^{\madj}_Y Z}{W} - \ip{\ad^\madj_Y\ad^\madj_X Z}{W} \\
= &\anchor X\cdot \ip{\ad^\madj_Y Z}{W} -\ip{\ad^\madj_Y Z}{\ad_X W} - \anchor Y\cdot \ip{\ad^\madj_X Z}{W} + \ip{\ad^\madj_X Z}{\ad_Y W}\\
=&\anchor X \anchor Y\cdot\ip{Z}{W} - \anchor X\cdot \ip{Z}{\ad_Y W} - \anchor Y\cdot \ip{Z}{\ad_X W} + \ip{Z}{\ad_Y \ad_X W}\\
 &-\anchor Y \anchor X\cdot \ip{Z}{W} + \anchor Y\cdot \ip{Z}{\ad_X W} + \anchor X \cdot \ip{Z}{\ad_Y W} - \ip{Z}{\ad_X \ad_Y W}\\
=&(\anchor X \anchor Y-\anchor Y \anchor X)\cdot\ip{Z}{W} - \ip{Z}{\ad_X \ad_Y W -\ad_Y \ad_X W},
\end{align*}
proving (1). 
\item Follows from direct computations using Definition~\ref{def:metric.adjoint}. 
\item  The  ``Lie derivative'' $\Lie_X$  (see \cite[Proposition~7.1.4]{mackenzie_general_2005} and the discussion thereafter) in the setting of Lie algebroids, is characterized by: for $X,Y\in \Gamma\Aroid$, $f\in C^\infty(\Base)$
\[
\Lie_X(f) = \anchor X\cdot f, \quad \text{and }\quad \Lie_X Y = [X,Y],
\]
and on sections of tensors of $\Aroid$ it is extended by the Leibniz rule
\[
\Lie_X(T\otimes S) = \Lie_X T \otimes S + T\otimes \Lie_X S.
\]
Let $\alpha \in \Gamma \Aroid^*$, $X\in \Gamma\Aroid$ and $Z\in \Aroid$. Applying the above axioms of the Lie derivative $\Lie_X$, we get
\[
\Lie_X\alpha (Z) =  \anchor X \cdot \alpha(Z) -\alpha(\ad_X Z)  = \anchor X \cdot \ip{\alpha^\sharp}{Z} - \ip{\alpha^{\sharp}}{\ad_X Z}  = \ip{\ad^\madj_X\alpha^{\sharp}}{Z}.
\]
\item Given an orthonormal frame $\set{X_k}$, consider
\[
\ip{\ad^\madj_{X_k} X_l}{X_j} = \anchor X_k\cdot \ip{X_l}{X_j} - \ip{X_l}{\ad_{X_k}X_j}.
\]
The product $\ip{X_l}{X_j}$ is a constant function in an orthonormal frame, so $\anchor X_k\cdot \ip{X_l}{X_j} = 0$. Writing in matrix notation, the metric is the identity matrix in the orthonormal frame, hence
\[
((\ad^\madj_{X_k}) X_l )^\trans X_j=X_l^\trans(\ad^\madj_{X_k})^\trans X_j =- X_l^\trans (\ad_{X_k})X_j.
\]
\item Let $X\in \Gamma\Aroid$, $S\in \Gamma T\Base$ and $V\in \vertbundle$, then 
\[
\ip{\ad^{\madj}_X \gamma(S) }{\iota V} = \anchor X \cdot \ip{\gamma(S)}{\iota(V)} - \ip{\gamma(S)}{\ad_X \iota(V)}.
\]
The first term vanishes since $\gamma(S)\perp \iota(V)$, and the second term vanishes since the vertical bracket is an ideal, i.e. $\anchor (\ad_X \iota(V) ) = 0$.
\end{enumerate}
\end{proof}

\subsection{The sectional curvature formula}
We give an explicit formula for the Levi-Civita connection on a Lie Groupoid with right-invariant  source-fibre metric, in terms of $\ad$ and $\ad^\madj$. 
\begin{lemma}\label{lem:levi.civita.formula}
Let $\tilde{X}(g)= dR_gX$ and $\tilde{Y}= dR_g Y$ be orthogonal right-invariant vector fields on the Lie groupoid $\Groid$ endowed with a right-invariant source-fibre metric. Then the vector field $\nabla^{\Groid}_{\tilde{X}}\tilde{Y}$ is also right-invariant, and at the identity $\id_{\Base}\subset \Groid$ given by 
\[
\nabla^{\Groid}_{\tilde{X}} \tilde{Y}\big|_{\id_{\Base}}  = \nabla_X Y= \frac{1}{2}\left( \ad_X Y + \ad^\madj_X Y + \ad^\madj_Y X \right).
\]
\end{lemma}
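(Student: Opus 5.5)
The plan has two parts. First we show that $\nabla^{\Groid}_{\tilde X}\tilde Y$ is right-invariant. Then we identify its value along $\id_\Base$ with the algebroid Levi-Civita connection of Theorem~\ref{th:levi.civita.con}, and rewrite the Koszul formula using Definition~\ref{def:metric.adjoint}.

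For right-invariance, fix $h$ with $\src(g)=\trg(h)$. Right translation $R_h:\Groid_{\src(g)}\to\Groid_{\src(h)}$ is a diffeomorphism between source fibres. By the definition of the source-fibre metric, $\ip{dR_h\tilde A}{dR_h\tilde B}(gh)=\ip{dR_{(gh)^{-1}}dR_h\tilde A}{\cdots}=\ip{\tilde A}{\tilde B}(g)$, so $R_h$ is an isometry. Here $\nabla^{\Groid}$ is understood as the Levi-Civita connection on each source fibre, which makes sense because $\tilde X,\tilde Y$ are tangent to source fibres. Isometries preserve Levi-Civita connections, hence $dR_h(\nabla^{\Groid}_{\tilde X}\tilde Y)_g=(\nabla^{\Groid}_{dR_h\tilde X}dR_h\tilde Y)_{gh}=(\nabla^{\Groid}_{\tilde X}\tilde Y)_{gh}$. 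The field is also source-tangent, so it is right-invariant.

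For the value at the identity, write $Z$ for an arbitrary section and $\tilde Z$ for its right-invariant extension. Apply the fibrewise Koszul formula on $\Groid_x$ to $\tilde X,\tilde Y,\tilde Z$. Each term transfers to $\Aroid$:
\begin{itemize}
\item brackets satisfy $[\tilde X,\tilde Y]=\widetilde{[X,Y]_\Aroid}$ by Theorem~\ref{th:lie.algebroid.of.lie.groupoid} and right-invariance;
\item inner products satisfy $\ip{\tilde Y}{\tilde Z}=\ip{Y}{Z}\circ\trg$;
\item derivatives satisfy $\tilde X(\phi\circ\trg)=(\anchor X\cdot\phi)\circ\trg$, since $d\trg\,\tilde X_g=d\trg\, dR_gX_y=d\trg X_y=\anchor X$, using $\trg\circ R_g=\trg$.
\end{itemize}
Hence $2\ip{\nabla^{\Groid}_{\tilde X}\tilde Y}{\tilde Z}$ restricted to $\id_\Base$ is exactly the right-hand side of the Koszul formula in Theorem~\ref{th:levi.civita.con}. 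Right-invariant fields at a unit span $T^{\src}$, so this gives $\nabla^{\Groid}_{\tilde X}\tilde Y|_{\id_\Base}=\nabla_XY$.

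It remains to simplify the Koszul formula. Using Definition~\ref{def:metric.adjoint}:
\[
\anchor Y\ip{Z}{X}-\ip{[Y,Z]}{X}=\ip{Z}{\ad^\madj_YX},
\qquad
\anchor X\ip{Y}{Z}+\ip{[Z,X]}{Y}=\ip{Z}{\ad^\madj_XY}.
\]
What is left is $\ip{[X,Y]}{Z}-\anchor Z\ip{X}{Y}$. The orthogonality hypothesis gives $\ip{X}{Y}\equiv0$, so $\anchor Z\ip{X}{Y}=0$; this is precisely where the hypothesis is used. Summing, $2\ip{\nabla_XY}{Z}=\ip{\ad_XY+\ad^\madj_XY+\ad^\madj_YX}{Z}$ for all $Z$, and nondegeneracy gives the stated formula.

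The main obstacle is making the transfer from the groupoid to the algebroid rigorous. One must check that the fibrewise Koszul formula on $\Groid_x$ only involves source-tangent fields, and that $\tilde X$ acts on $\trg$-pulled-back functions through the anchor. In infinite dimensions one must also assume that $\ad^\madj$ exists, as in Definition~\ref{def:metric.adjoint}.
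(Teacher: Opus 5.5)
Your proposal is correct, and its core step (rewriting the algebroid Koszul formula via Definition~\ref{def:metric.adjoint} and discarding $\anchor Z\ip{X}{Y}$ by orthogonality) is exactly the paper's proof. Your isometry argument for right-invariance and your term-by-term transfer from the source fibres to $\Aroid$ make explicit what the paper only asserts in Section~\ref{sec:relation.between.riemannian.structures}.
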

\begin{proof}
Using Theorem~\ref{th:levi.civita.con}, we write for $X,Y,Z\in \Gamma \Aroid$, with $X$ and $Y$ orthogonal
\[
2\ip{\nabla_X Y}{Z} = \ip{ [X,Y]}{Z}  +\anchor X\ip{ Y}{Z}  -\ip{ [X,Z]}{Y} + \anchor Y\ip{ Z}{X} - \ip{ [Y,Z]}{X}
\]
\[
= \ip{\ad_X Y}{Z} + \ip{\ad^\madj_X Y }{Z} + \ip{\ad^\madj_Y X}{Z}.
\]
By the non-degeneracy of the metric, we obtain the desired result.
\end{proof}
The Riemannian curvature of a Lie algebroid can likewise be written solely in terms of its bracket and metric. For Lie groups, this recovers the Arnold-Milnor 1-2-3-4 formula for sectional curvature; see
Theorem~\ref{th:arnold.1-2-3-4}. We now prove the corresponding Lie groupoid
version.
\begin{theorem}[2-1-2-3-4 formula]\label{th:2-1-2-3-4}
The sectional curvature of a Lie groupoid $\Groid$ with a right-invariant metric, in the direction determined by an orthonormal pair $\tilde{X},\tilde{Y}\in T^{\src}_g\Groid$, is
\[
C^{\Groid}(\tilde{X}, \tilde{Y}) =2\big(\anchor X\cdot \ip{\alpha }{ Y} - \anchor Y \cdot \ip{\alpha}{X} \big) 
+ \ip{\delta}{\delta} + 2\ip{\alpha}{\beta} - 3\ip{\alpha}{\alpha} -4\ip{B_X}{B_Y},
\]
where $X,Y\in\Gamma\Aroid$ are local orthonormal sections defined near $y=\trg(g)$ and satisfying
\[
X_y=dR_{g^{-1}}\tilde X,
\qquad
Y_y=dR_{g^{-1}}\tilde Y,
\]
and
\[
2\alpha=[X,Y]_{\Aroid}=\ad_XY,
\qquad
2\delta=\ad^\madj_XY+\ad^\madj_YX,
\]
\[
2\beta=\ad^\madj_XY-\ad^\madj_YX,
\qquad
2B_X=\ad^\madj_XX,
\qquad
2B_Y=\ad^\madj_YY.
\]
The right-hand side is independent of the chosen local orthonormal extensions.
\end{theorem}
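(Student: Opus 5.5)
First reduce to the algebroid. The metric and bracket of right-invariant source-tangent fields are determined at the units, and $R_g$ is an isometry between source fibres carrying right-invariant fields to right-invariant fields. Hence $C^{\Groid}(\tilde X,\tilde Y)=\ip{\curv^\nabla(X,Y)X}{Y}(y)$, where $\nabla$ is the Levi-Civita connection of $\Aroid$ from Theorem~\ref{th:levi.civita.con}. Because $\curv^\nabla$ is $C^\infty(\Base)$-trilinear, this value depends only on $X_y,Y_y$, which gives the claimed independence of the extensions. It then suffices to compute
\[
\ip{\nabla_{[X,Y]}X}{Y}-\ip{\nabla_X\nabla_YX}{Y}+\ip{\nabla_Y\nabla_XX}{Y}
\]
for local orthonormal sections $X,Y$.

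Next, eliminate the second derivatives using metric compatibility:
\[
\ip{\nabla_X\nabla_YX}{Y}=\anchor X\cdot\ip{\nabla_YX}{Y}-\ip{\nabla_YX}{\nabla_XY},
\]
and similarly for the other term. The first-order pieces come from Lemma~\ref{lem:levi.civita.formula} (it uses only $\ip XY=0$, which holds identically):
\[
\nabla_XY=\alpha+\delta,\qquad \nabla_YX=-\alpha+\delta,\qquad \nabla_XX=B_X,\qquad \nabla_YY=B_Y.
\]
Here $\nabla_XX=B_X$ follows from the Koszul formula because $\ip XX\equiv1$. The derivative terms involve $\ip{\nabla_YX}{Y}$ and $\ip{\nabla_XX}{Y}$. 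Using the defining identity of $\ad^\madj$ and constancy of the inner products, these reduce to multiples of $\ip{\alpha}{Y}$ and $\ip{\alpha}{X}$. For example, $\ip{\ad^\madj_XX}{Y}=-\ip{X}{\ad_XY}=-2\ip{\alpha}{X}$, and $\ip{\ad^\madj_YX}{Y}=-\ip{X}{\ad_YY}=0$. This is where the new anchor-derivative terms $2(\anchor X\cdot\ip{\alpha}{Y}-\anchor Y\cdot\ip{\alpha}{X})$ arise. They vanish in the Lie group case, since all coefficients are then constant.

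For the remaining algebraic terms, write $[X,Y]=2\alpha$ and compute
\[
\ip{\nabla_{2\alpha}X}{Y}=\alpha\text{-Koszul terms},
\]
again by the Koszul formula with orthonormality. This gives expressions like $\ip{\alpha}{\ad^\madj_XY}$ and $\ip{\alpha}{\ad^\madj_YX}$, i.e.\ combinations of $\ip\alpha\delta$, $\ip\alpha\beta$ and $\ip\alpha\alpha$. The quadratic terms are
\[
\ip{\nabla_YX}{\nabla_XY}=\ip{\delta-\alpha}{\delta+\alpha}=\ip\delta\delta-\ip\alpha\alpha
\qquad\text{and}\qquad
\ip{\nabla_XX}{\nabla_YY}=\ip{B_X}{B_Y}.
\]
Collecting everything should reproduce Arnold's combination $\ip\delta\delta+2\ip\alpha\beta-3\ip\alpha\alpha-4\ip{B_X}{B_Y}$, following the classical proof of Theorem~\ref{th:arnold.1-2-3-4} but with the $\anchor$-derivative terms kept. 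The sign convention is $\ip{\curv(X,Y)X}{Y}$ with $\curv=\nabla_{[X,Y]}-[\nabla_X,\nabla_Y]$.

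The main obstacle is the bookkeeping of the derivative terms. Arnold's normalisations, e.g.\ $\ip{B_X}{B_Y}$ with coefficient $4$, require rewriting quantities such as $\ip{\alpha}{\beta}$ via the $\ad^\madj$-identity. Each such rewriting may itself generate $\anchor X\cdot(\cdot)$ terms, and these must be collected carefully into exactly $2(\anchor X\cdot\ip\alpha Y-\anchor Y\cdot\ip\alpha X)$. To keep this under control, I would work in a local orthonormal frame extending $X,Y$. There, Proposition~\ref{prop:metric.adjoint.properties}(4) gives $\ad^\madj_{X_k}=-\ad_{X_k}^{\trans}$ pointwise, so every term becomes a polynomial in the structure functions $c_{ij}^k$ plus anchor derivatives of $c_{XY}^X$ and $c_{XY}^Y$. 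The computation can then be checked term by term against the Lie group case.
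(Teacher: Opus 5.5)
Your outline is the same as the paper's proof. You pass to $\ip{\curv^\nabla(X,Y)X}{Y}(y)$ by right-invariance, get independence of the extensions from tensoriality, split the curvature into three terms, and remove the second derivatives by metric compatibility. The gap is in one of your first-order inputs: $\nabla_XX$ is not $B_X$. Putting $Y=X$ in the Koszul formula and using $\ip{X}{X}\equiv1$ gives
$\ip{\nabla_XX}{Z}=\anchor X\cdot\ip{X}{Z}-\ip{\ad_XZ}{X}=\ip{Z}{\ad^\madj_XX}$.
Hence $\nabla_XX=\ad^\madj_XX=2B_X$, and likewise $\nabla_YY=2B_Y$. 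This is also Lemma~\ref{lem:levi.civita.formula} with $Y=X$, which only needs $\ip{X}{Y}$ to be constant. With your values, the third term would produce $-\anchor Y\cdot\ip{\alpha}{X}-\ip{B_X}{B_Y}$ instead of $-2\anchor Y\cdot\ip{\alpha}{X}-4\ip{B_X}{B_Y}$. The resulting total would not even be symmetric under $X\leftrightarrow Y$ (which sends $\alpha\mapsto-\alpha$), whereas $C^{\Groid}$ is. Your plan to recover the coefficient $4$ by ``rewriting'' $\ip{\alpha}{\beta}$ cannot work. The $4$ is simply $\ip{2B_X}{2B_Y}$.

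Once this is corrected, the bookkeeping obstacle you anticipate disappears, and no structure-function computation in a frame is needed. Use $\nabla_XY=\alpha+\delta$ and $\nabla_YX=\delta-\alpha$. Also use $\ip{\nabla_YX}{Y}=-\ip{X}{\nabla_YY}=-2\ip{\alpha}{Y}$ and $\ip{\nabla_XX}{Y}=\ip{\ad^\madj_XX}{Y}=-2\ip{\alpha}{X}$. The three terms are then
\[
\ip{\nabla_{[X,Y]}X}{Y}=\ip{\ad_XY}{\ad^\madj_XY-\nabla_XY}=2\ip{\alpha}{\beta}-2\ip{\alpha}{\alpha},
\]
\[
-\ip{\nabla_X\nabla_YX}{Y}=2\anchor X\cdot\ip{\alpha}{Y}+\ip{\delta}{\delta}-\ip{\alpha}{\alpha},
\qquad
\ip{\nabla_Y\nabla_XX}{Y}=-2\anchor Y\cdot\ip{\alpha}{X}-4\ip{B_X}{B_Y}.
\]
The first line follows from torsion-freeness plus metric compatibility; your Koszul route also works, and there the anchor terms cancel. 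The anchor-derivative terms of the theorem come only from the two metric-compatibility steps. Summing the three lines gives the stated formula, exactly as in the paper.
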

The proof of Theorem~\ref{th:2-1-2-3-4} is given in Section~\ref{sec:proofs.of.main.theorems}.

Compared with the Arnold-Milnor formula in
Theorem~\ref{th:arnold.1-2-3-4}, the formula for a Lie groupoid with a
source-fibre metric contains two additional terms differentiating the
coefficients of the algebroid bracket along the anchor images of $X,Y$.
In particular, these terms vanish when the bracket coefficients in the
chosen orthonormal frame are constant.
For a Lie group over a point and the pair groupoid of a Riemannian manifold, the formula specializes as follows.

\begin{corollary}[\cite{arnold_sur_1966}]\label{cor:arnold.2-1-2-3-4}
For a Lie group $G$ with a right-invariant metric and an orthonormal pair $\tilde X,\tilde Y\in T_gG$, Theorem~\ref{th:2-1-2-3-4} reduces to Arnold's formula
\[
C^G(\tilde X,\tilde Y)
=\ip{\delta}{\delta}+2\ip{\alpha}{\beta}
-3\ip{\alpha}{\alpha}-4\ip{B_X}{B_Y},
\]
where $X=dR_{g^{-1}}\tilde X$, $Y=dR_{g^{-1}}\tilde Y$, and $\alpha,\beta,\delta,B_X,B_Y$ are as in Theorem~\ref{th:2-1-2-3-4}, with $[X,Y]_{\Aroid}=[X,Y]_{\mf g}$ and $\ad^\madj=-\ad^\trans$.
\end{corollary}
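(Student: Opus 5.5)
The corollary follows by specializing Theorem~\ref{th:2-1-2-3-4} to the Lie groupoid $G\rightrightarrows\set{*}$ of Example~\ref{ex:lie.groupoids}. There are two checks: that the derivative terms vanish, and that the algebroid operators $\ad$, $\ad^\madj$ reduce to those of Theorem~\ref{th:arnold.1-2-3-4}.

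\textbf{Step 1: the groupoid data.} The base is a point, so $\src=\trg=*$ and the source fibre is all of $G$. Right translation $R_g$ is then the usual right translation, and the Lie algebroid is $\Aroid=T_{\Id}G=\mf g$ with zero anchor. By Theorem~\ref{th:lie.algebroid.of.lie.groupoid} and the conventions of Section~\ref{sec:Lie.groups}, the algebroid bracket is $[X,Y]_{\Aroid}=[X,Y]_{\mf g}$. A right-invariant source-fibre metric is exactly a right-invariant metric on $G$. Sections of $\Aroid$ over a point are just vectors, so the only orthonormal extensions of $X=dR_{g^{-1}}\tilde X$ and $Y=dR_{g^{-1}}\tilde Y$ are $X$ and $Y$ themselves.

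\textbf{Step 2: the derivative terms vanish.} Since $\anchor=0$, we get $\anchor X\cdot\ip{\alpha}{Y}=\anchor Y\cdot\ip{\alpha}{X}=0$. This holds regardless of any constancy argument, because there is no base to differentiate along.

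\textbf{Step 3: identify $\ad^\madj$.} Definition~\ref{def:metric.adjoint} with $\anchor X=0$ gives $\ip{Y}{\ad^\madj_XZ}=-\ip{\ad_XY}{Z}$ for all $Y$. Hence $\ad^\madj_X=-\ad_X^{\trans}$, which agrees with Proposition~\ref{prop:metric.adjoint.properties}(4).

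\textbf{Step 4: assemble.} Substituting into Theorem~\ref{th:2-1-2-3-4} gives exactly the displayed formula, with $\alpha,\beta,\delta,B_X,B_Y$ defined as in that theorem. It remains to reconcile the definitions with those of Theorem~\ref{th:arnold.1-2-3-4}.
\begin{itemize}
\item For $\delta$ and $\beta$: with $\ad^\madj=-\ad^\trans$, the theorem's $2\delta=\ad^\madj_XY+\ad^\madj_YX$ becomes $-(\ad^\trans_XY+\ad^\trans_YX)$, and its $2\beta=\ad^\madj_XY-\ad^\madj_YX$ becomes $\ad^\trans_YX-\ad^\trans_XY$. So $\delta$ is Arnold's $\delta$ up to sign, and $\beta$ is exactly Arnold's $\beta$.
\item For $B_X$ and $B_Y$: each is Arnold's up to sign, $B_X=-B_X^{\mathrm{Arn}}$ and $B_Y=-B_Y^{\mathrm{Arn}}$.
\end{itemize}
The terms $\ip{\delta}{\delta}$ and $\ip{B_X}{B_Y}$ are quadratic in these quantities, so the signs cancel. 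The term $\ip{\alpha}{\beta}$ involves $\beta$, which matches exactly. Therefore the formula coincides with Arnold's.

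\textbf{Main obstacle.} The only delicate point is this sign bookkeeping between $\ad^\madj$ and $-\ad^\trans$, specifically confirming that $\beta$ carries no sign flip so that the cross term $2\ip{\alpha}{\beta}$ is unchanged. The rest is a direct consequence of $\anchor=0$.
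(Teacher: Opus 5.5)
Your proposal is correct and matches the paper's implicit argument. The paper gives no separate proof; it simply specializes Theorem~\ref{th:2-1-2-3-4} to $G\rightrightarrows\set{*}$, where the zero anchor removes the derivative terms and forces $\ad^\madj=-\ad^\trans$. Your extra sign check goes slightly beyond what the paper writes: $\delta$, $B_X$ and $B_Y$ change sign while $\beta$ does not, so the formula agrees with the definitions in Theorem~\ref{th:arnold.1-2-3-4}, which the paper leaves unstated.
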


\begin{corollary}\label{cor:Riemannian.2-1-2-3-4}
For a Riemannian manifold $(\Base,\ip{\cdot}{\cdot})$ and an orthonormal pair $\tilde X,\tilde Y\in T_x\Base$, Theorem~\ref{th:2-1-2-3-4} gives
\[
C^{T\Base}(\tilde X,\tilde Y)
=2\bigl(X\cdot\ip{\alpha}{Y}-Y\cdot\ip{\alpha}{X}\bigr)
+\ip{\delta}{\delta}+2\ip{\alpha}{\beta}
-3\ip{\alpha}{\alpha}-4\ip{B_X}{B_Y},
\]
where $X,Y$ are local orthonormal extensions of $\tilde X,\tilde Y$, and the remaining notation is that of Theorem~\ref{th:2-1-2-3-4}, with $\Aroid=T\Base$, $[X,Y]_{\Aroid}=[X,Y]_{T\Base}$ the bracket of vector fields, and $\anchor=\Id$.
\end{corollary}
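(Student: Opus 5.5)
The plan is to realise $(\Base,\ip{\cdot}{\cdot})$ as the Lie algebroid of a groupoid with a right-invariant source-fibre metric, so that Theorem~\ref{th:2-1-2-3-4} applies verbatim. The natural choice is the pair groupoid $\Base\times\Base\rightrightarrows\Base$ from Example~\ref{ex:lie.groupoids}. By Example~\ref{ex:aroids}.\ref{ex:tangent.bundle}, its Lie algebroid is $T\Base$, with anchor $\Id$ and the bracket of vector fields.

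First I make the identification explicit. The source fibre over $x$ is $\Groid_x=\Base\times\{x\}$, and $\trg|_{\Groid_x}:(z,x)\mapsto z$ is a diffeomorphism onto $\Base$. Right translation by $g=(y,x)$ is $R_g:\Groid_y\to\Groid_x$, $(z,y)\mapsto(z,x)$, which commutes with the target maps. The unit tangent space $T_{\id_x}\Groid_x$ is identified with $T_x\Base$ via $d\trg$, which is the anchor. Hence the right-invariant extension of the metric on $\Aroid\simeq T\Base$ (Section~\ref{sec:relation.between.riemannian.structures}) makes each $\trg|_{\Groid_x}$ an isometry onto $(\Base,\ip{\cdot}{\cdot})$. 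Under $d\trg$, a right-invariant vector field $\widetilde X$ restricts on each source fibre to the vector field $X$ itself. So the Levi-Civita connection and curvature of the source fibre correspond to those of $\Base$, in agreement with the Example after Remark~\ref{rem:curv.levi-civita}. Consequently $C^{\Groid}(\tilde X,\tilde Y)$ at $g=(y,x)$ equals $C^{T\Base}$ at $y$ of the corresponding vectors $d\trg\,\tilde X$ and $d\trg\,\tilde Y$.

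Next I substitute the algebroid data into Theorem~\ref{th:2-1-2-3-4}. Since $\Base$ is finite-dimensional, $\ad^\madj$ exists by Definition~\ref{def:metric.adjoint}. With $\anchor=\Id$, the term $\anchor X\cdot\ip{\alpha}{Y}$ becomes $X\cdot\ip{\alpha}{Y}$, and $2\alpha=[X,Y]_{T\Base}$. The quantities $\delta$, $\beta$, $B_X$, $B_Y$ are formed with $\ad^\madj$ defined by $X\cdot\ip{Y}{Z}=\ip{[X,Y]}{Z}+\ip{Y}{\ad^\madj_XZ}$. The local orthonormal sections required in the theorem are exactly local orthonormal vector fields extending $\tilde X,\tilde Y$. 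Independence of the extensions is inherited from the theorem.

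The only step needing care is the identification of the source-fibre geometry with $(\Base,\ip{\cdot}{\cdot})$. In particular, I must check that $dR_{g^{-1}}$ corresponds to the identity on $T\Base$ after applying $d\trg$, so that $X_y$ is literally $\tilde X$ viewed in $T_y\Base$. This follows from $\trg\circ R_g=\trg$. Once this is settled, the corollary is a direct transcription of Theorem~\ref{th:2-1-2-3-4}.
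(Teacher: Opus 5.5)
Your proposal is correct and takes the same route as the paper: the paper obtains this corollary by specialising Theorem~\ref{th:2-1-2-3-4} to the pair groupoid $\Base\times\Base\rightrightarrows\Base$, whose Lie algebroid is $T\Base$ with anchor $\Id$ and the bracket of vector fields. Your check that $\trg\circ R_g=\trg$, so that each $\trg|_{\Groid_x}$ is an isometry onto $(\Base,\ip{\cdot}{\cdot})$, spells out the identification the paper leaves implicit.
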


\section{Curvature decomposition for Riemannian submersion Lie algebroids}\label{sec:Riemannian.submersion.Lie.algebroids}
\subsection{Orthogonal splitting and metric derivations}
Let
\[
\begin{tikzcd}
\vertbundle \ar[r,"\iota",hook] &
\Aroid \ar[r,"\anchor",two heads] &
T\Base
\end{tikzcd}
\]
be a transitive Riemannian Lie algebroid.  Its metric determines the orthogonal right splitting $\gamma$ and the vector-bundle decomposition
\[
\Aroid\simeq\gamma(T\Base)\oplus{\perp}\iota(\vertbundle),
\qquad
\gamma(T\Base)=\iota(\vertbundle)^\perp.
\]
The induced metrics are
\[
\ip{S_1}{S_2}_{T\Base} \coloneq
\ip{\gamma(S_1)}{\gamma(S_2)}_{\Aroid},
\qquad
\ip{V_1}{V_2}_{\vertbundle} \coloneq
\ip{\iota(V_1)}{\iota(V_2)}_{\Aroid}.
\]
Thus $\anchor|_{\gamma(T\Base)}$ and $\iota$ are isometries.

\begin{definition}\label{def:RSLA}
A transitive Riemannian Lie algebroid equipped with these induced metrics and orthogonal splitting is called a \emph{\RSLA}.
\end{definition}

\begin{remark}
Under the Atiyah correspondence of Remark~\ref{rem:atiyah.correspondence}, a Riemannian submersion Lie algebroid metric corresponds to an $H$-invariant bundle-like metric on the principal bundle $P(\Base,H,\pi)$.
\end{remark}
The orthogonal splitting of $\Aroid$ induces the vector-bundle decomposition
\[
\so(\Aroid)
\simeq \so(T\Base)\oplus\so(\vertbundle)
\oplus\Hom(T\Base,\vertbundle).
\]
The corresponding block decomposition of metric derivations relates the Levi-Civita connections on $\Aroid$, $T\Base$, and $\vertbundle$.

\begin{theorem}\label{th:block.diag.derivation}
For $D\in\Gamma\Der_{\so}(\Aroid)$, write
\[
(D)=
\begin{pmatrix}
\anchor\circ D\circ\gamma & \anchor\circ D\circ\iota\\
\omega\circ D\circ\gamma & \omega\circ D\circ\iota
\end{pmatrix}
\]
with respect to the orthogonal vector-bundle decomposition
$\Aroid\simeq\gamma(T\Base)\oplus_{\perp}\iota(\vertbundle)$.
\begin{enumerate}
\item\label{th:levi.civita.to.horizontal}
The diagonal block
\[
g:\Der_{\so}(\Aroid)\longrightarrow\Der_{\so}(T\Base),
\qquad D\longmapsto\anchor\circ D\circ\gamma,
\]
is anchor-preserving and projects the Levi-Civita connection on $\Aroid$ to that on $T\Base$:
\[
\widetilde{\nabla}=g\circ\nabla\circ\gamma,
\qquad
\widetilde{\nabla}_S=g(\nabla_{\gamma(S)}),
\quad S\in\Gamma T\Base.
\]
\item
Likewise,
\[
\widehat g:\Der_{\so}(\Aroid)\longrightarrow\Der_{\so}(\vertbundle),
\qquad D\longmapsto\omega\circ D\circ\iota,
\]
is anchor-preserving and projects the Levi-Civita connection on $\Aroid$ to that on $\vertbundle$:
\[
\widehat{\nabla}=\widehat g\circ\nabla\circ\iota,
\qquad
\widehat{\nabla}_V=\widehat g(\nabla_{\iota(V)}),
\quad V\in\Gamma\vertbundle.
\]
\item\label{th:off.diag.cinfty.linear}
The off-diagonal entries are $C^\infty(\Base)$-linear and hence define vector-bundle maps
\[
\omega\circ D\circ\gamma:T\Base\longrightarrow\vertbundle,
\qquad
\anchor\circ D\circ\iota:\vertbundle\longrightarrow T\Base.
\]
\item\label{th:off.diag.negative.metric.adjoint}
These maps are negative fibrewise adjoints: for $S\in T\Base$ and $V\in\vertbundle$,
\[
\ip{\omega\circ D\circ\gamma(S)}{V}_{\vertbundle}
=-\ip{S}{\anchor\circ D\circ\iota(V)}_{T\Base}.
\]
\end{enumerate}
\end{theorem}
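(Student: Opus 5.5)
The plan is to verify the four items directly from the definitions, using the identity $\iota\circ\omega+\gamma\circ\anchor=\Id_{\Aroid}$, the fact that $\anchor|_{\gamma(T\Base)}$ and $\iota$ are isometries, and the uniqueness part of Theorem~\ref{th:levi.civita.con} applied to $T\Base$ and to $\vertbundle$. The algebraic items come first, since they are needed to interpret the projections.

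\emph{Items (3), (4) and the derivation property.} Let $D\in\Gamma\Der_{\so}(\Aroid)$ with symbol $S_D=\anchor_{\Der}D$. Since $\gamma$, $\iota$, $\anchor$, $\omega$ are bundle maps, for $f\in C^\infty(\Base)$ we get
\[
\omega D\gamma(fS)=(S_D f)\,\omega\gamma(S)+f\,\omega D\gamma(S)=f\,\omega D\gamma(S),
\]
because $\omega\circ\gamma=0$. Similarly $\anchor D\iota(fV)=f\,\anchor D\iota(V)$, because $\anchor\circ\iota=0$. This gives (3). The same computation for the diagonal blocks yields $\anchor D\gamma(fS)=(S_Df)S+f\,\anchor D\gamma(S)$, using $\anchor\gamma=\Id$, and likewise for $\omega D\iota$, using $\omega\iota=\Id$. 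Hence $g(D)$ and $\widehat g(D)$ are derivations with the same symbol $S_D$, so $g$ and $\widehat g$ are anchor-preserving.

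For metricity, compute $S_D\ip{S_1}{S_2}_{T\Base}=S_D\ip{\gamma S_1}{\gamma S_2}$ and expand by the metric property of $D$. Decompose $D\gamma S_i=\gamma\anchor D\gamma S_i+\iota\omega D\gamma S_i$; the vertical parts are orthogonal to $\gamma S_j$. For (4), apply metricity of $D$ to $\ip{\gamma S}{\iota V}=0$. The same orthogonality argument gives the analogous statement for $\widehat g$. Thus $g$ and $\widehat g$ land in $\Der_{\so}$.

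\emph{Items (1) and (2).} By the above, $g\circ\nabla\circ\gamma$ is a metric $T\Base$-connection on $T\Base$, so by uniqueness it suffices to prove it is torsion-free. The torsion is
\[
\anchor\nabla_{\gamma S_1}\gamma S_2-\anchor\nabla_{\gamma S_2}\gamma S_1-[S_1,S_2].
\]
By torsion-freeness of $\nabla$ this equals $\anchor[\gamma S_1,\gamma S_2]_{\Aroid}-[S_1,S_2]$, which vanishes by \eqref{eq:anchor.morphism}.

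For $\vertbundle$, the map $\widehat g\circ\nabla\circ\iota$ is a metric $\vertbundle$-connection, with zero anchor. Its torsion is
\[
\omega\bigl(\nabla_{\iota V_1}\iota V_2-\nabla_{\iota V_2}\iota V_1\bigr)-[V_1,V_2]=\omega[\iota V_1,\iota V_2]_{\Aroid}-[V_1,V_2]=0,
\]
since $\iota$ is a bracket morphism and $\omega\iota=\Id$. By uniqueness of the Levi-Civita connection on $\vertbundle$ (whose existence is guaranteed in finite dimensions, or, in infinite dimensions, by the fact that this connection itself solves the Koszul formula), this connection equals $\widehat\nabla$.

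\emph{Main obstacle.} The only delicate point is bookkeeping: keeping careful track of which cross terms vanish by orthogonality in the metricity checks, and using that the vertical bracket is the restriction of $[\cdot,\cdot]_{\Aroid}$. In the infinite-dimensional case, uniqueness from nondegeneracy still applies, since the constructed connections satisfy the Koszul formula.
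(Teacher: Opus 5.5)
Your proposal is correct and follows essentially the same route as the paper. Both arguments use the same ingredients: the derivation rule with $\anchor\circ\gamma=\Id$ and $\omega\circ\iota=\Id$ (for the diagonal blocks), and with $\anchor\circ\iota=0$ and $\omega\circ\gamma=0$ (for the off-diagonal blocks); metricity of $D$ on the orthogonal decomposition for compatibility and for the negative-adjoint relation; and torsion-freeness via $\anchor[\gamma S_1,\gamma S_2]_{\Aroid}=[S_1,S_2]$ and $\omega[\iota V_1,\iota V_2]_{\Aroid}=[V_1,V_2]$, followed by uniqueness of the Levi-Civita connection. Your extra orthogonality bookkeeping and your remark on uniqueness in the weak-metric setting are harmless refinements that do not change the argument.
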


The proof of the theorem is given in Section~\ref{sec:proofs.of.main.theorems}. 

\begin{remark}\label{rem:second.fundamental.form}
Let $\Groid\rightrightarrows\Base$ integrate the \RSLA{} $\Aroid$.  The inclusion $\Groid_x^x\hookrightarrow\Groid_x$ has tangent inclusion $\iota:\vertbundle_x\hookrightarrow\Aroid_x$ at the identity, and its second fundamental form is
\[
\II(V_1,V_2)
=\anchor\nabla_{\iota(V_1)}\iota(V_2)
=\frac12\anchor\bigl(
\ad^\madj_{\iota(V_1)}\iota(V_2)
+\ad^\madj_{\iota(V_2)}\iota(V_1)
\bigr).
\]
For fixed $V_1$, this is the upper-right off-diagonal block of $\nabla_{\iota(V_1)}$, and post-composing with $\gamma$ lifts it to the O'Neill tensor $T$ evaluated on vertical vectors. 
\end{remark}

\begin{remark}\label{rem:horizontal.to.vertical.second.fundamental}
The lower-left off-diagonal block of $\nabla_{\gamma(S_1)}$ is the $C^\infty(\Base)$-bilinear, skew-symmetric map
\[
\omega\circ \nabla_{\gamma(S_1)}\gamma(S_2)
=\frac12\omega\circ \ad_{\gamma(S_1)}\gamma(S_2)
=-\frac12\omega\circ \curv^\gamma(S_1,S_2).
\]
Here the metric-adjoint terms in Lemma~\ref{lem:levi.civita.formula} are horizontal by Proposition~\ref{prop:metric.adjoint.properties}~(\ref{lem:metric.adjoint.property.horizontal.ideal}).  Under the inclusion $\iota$, this map is the restriction of the O'Neill tensor $A$ to horizontal vectors.
\end{remark}

\subsection{Curvature factorization and the projection maps}
We now state one of the main theorems of the paper: an abstract Lie
algebroid analogue of O'Neill's curvature identities. 
Boucetta states O'Neill's formulas for sectional curvature of Riemannian
Lie algebroids~\cite{boucetta_riemannian_2011}.
The maps entering our curvature identities are summarized by the diagram
\begin{equation*}
\begin{tikzcd}
\Der_{\so}(\vertbundle)
& \Der_{\mf{so}}(\Aroid) \ar[r, "g"] \ar[l, "\widehat{g}", swap]
& \Der_{\so}(T\Base) \\
\vertbundle \ar[r, "\iota", hook] \ar[u, "\widehat{\nabla}"]
& \Aroid \ar[r, "\anchor", two heads] \ar[u, "\nabla"]
  \ar[l, "\omega", bend left=25, pos=0.55]
& T\Base. \ar[l, "\gamma", bend left=25, pos=0.45]
  \ar[u, "\widetilde{\nabla}"]
\end{tikzcd}
\end{equation*}
\begin{theorem}\label{th:algebroid.ONeill}
Let $\Aroid$ be a \RSLA{}, with the induced Levi-Civita connections
$\nabla$, $\widetilde\nabla$, and $\widehat\nabla$, and let $g,\widehat g$
be the maps of Theorem~\ref{th:block.diag.derivation}.  Then the curvature
of the induced Levi-Civita connection on the base is the sum of the
curvature contributions arising from $g$, $\nabla$, and $\gamma$, while
the vertical curvature is the sum of those arising from $\widehat g$ and
$\nabla$.  More precisely,
\begin{equation}\tag{\emph{i}}\label{eq:lie.algebroid.oneill.base}
\curv^{\widetilde{\nabla}} = (\nabla \circ \gamma )^* \curv^g +  g\circ \gamma^* \curv^{\nabla}  + g\circ \nabla \circ \curv^\gamma.  
\end{equation}
\begin{equation}\tag{\emph{ii}}\label{eq:lie.algebroid.oneill.fibre}
\curv^{\widehat{\nabla}} = (\nabla \circ \iota )^* \curv^{\widehat{g}} +  \widehat{g}\circ \iota^* \curv^{\nabla} . 
\end{equation}
\end{theorem}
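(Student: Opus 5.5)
The plan is to reduce both identities to repeated applications of the composition rule, Lemma~\ref{lem:comp.curv}, using the factorizations supplied by Theorem~\ref{th:block.diag.derivation}.

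For (\ref{eq:lie.algebroid.oneill.base}), Theorem~\ref{th:block.diag.derivation}(\ref{th:levi.civita.to.horizontal}) gives $\widetilde\nabla=g\circ\nabla\circ\gamma$. This is a composition of three maps, each anchor-preserving:
\begin{itemize}
\item $\gamma:T\Base\to\Aroid$ is a right splitting;
\item $\nabla:\Aroid\to\Der_{\so}(\Aroid)$ is an $\Aroid$-connection;
\item $g$ is anchor-preserving by Theorem~\ref{th:block.diag.derivation}.
\end{itemize}
Set $\varphi_1=\nabla\circ\gamma$ and $\varphi_2=g$. Lemma~\ref{lem:comp.curv} then gives
\[
\curv^{\widetilde\nabla}=(\nabla\circ\gamma)^*\curv^g+g\circ\curv^{\nabla\circ\gamma}.
\]
Applying Lemma~\ref{lem:comp.curv} once more, to $\nabla\circ\gamma$, gives $\curv^{\nabla\circ\gamma}=\gamma^*\curv^\nabla+\nabla\circ\curv^\gamma$. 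Substituting this yields exactly the three terms in (\ref{eq:lie.algebroid.oneill.base}).

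For this to be legitimate, $g$ must be applied to values of $\curv^{\nabla\circ\gamma}$. Those values lie in $\iota_{\Der}(\so(\Aroid))\subset\Der_{\so}(\Aroid)$, which is the domain of $g$, so this is fine. The term $\nabla\circ\curv^\gamma$ is meaningful because $\curv^\gamma$ takes values in $\iota(\vertbundle)\subset\Aroid$, on which $\nabla$ is defined pointwise by $C^\infty$-linearity in the lower slot. Finally, $g$ is linear on the vertical part $\so(\Aroid)$, since it restricts there to the block map $E\mapsto\anchor\circ E\circ\gamma$. Hence the expression $g\circ(\gamma^*\curv^\nabla+\nabla\circ\curv^\gamma)$ splits additively.

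For (\ref{eq:lie.algebroid.oneill.fibre}), the same argument applies to $\widehat\nabla=\widehat g\circ\nabla\circ\iota$. Here $\iota:\vertbundle\to\Aroid$ is a Lie algebroid morphism, so it is anchor-preserving with $\curv^\iota=0$. Lemma~\ref{lem:comp.curv} gives $\curv^{\widehat\nabla}=(\nabla\circ\iota)^*\curv^{\widehat g}+\widehat g\circ\curv^{\nabla\circ\iota}$, and $\curv^{\nabla\circ\iota}=\iota^*\curv^\nabla+\nabla\circ\curv^\iota=\iota^*\curv^\nabla$. This explains why only two terms appear.

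The main obstacle is not the formal composition argument but checking its hypotheses for $g$ and $\widehat g$. Lemma~\ref{lem:comp.curv} needs genuine vector bundle maps between Lie algebroids over $\Base$ that preserve anchors. Two points need care.

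First, $g(D)=\anchor\circ D\circ\gamma$ must be a metric derivation of $T\Base$ covering the same vector field as $D$. The Leibniz rule follows from $\anchor\circ\gamma=\Id$: we have $\anchor D(f\gamma S)=(\anchor_{\Der}D\cdot f)S+f\anchor D\gamma S$. Metricity follows from $\gamma$ being isometric onto the horizontal bundle together with the orthogonality of the splitting. The map $\widehat g(D)=\omega\circ D\circ\iota$ is handled similarly, using $\omega\circ\iota=\Id$.

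Second, $g$ must be $C^\infty(\Base)$-linear in $D$, and hence a bundle map. This holds because $g(fD)=fg(D)$.

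With these verified, and already largely contained in Theorem~\ref{th:block.diag.derivation}, the remaining step is the bookkeeping above. I would also check that the curvature of each composite lands in the correct vertical bundle, $\so(T\Base)$ and $\so(\vertbundle)$ respectively, so that both sides are compared as $C^\infty$-bilinear tensors.
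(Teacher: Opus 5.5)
Your proposal is correct and matches the paper's proof. The paper likewise factors $\widetilde\nabla=g\circ\nabla\circ\gamma$ and $\widehat\nabla=\widehat g\circ\nabla\circ\iota$ via Theorem~\ref{th:block.diag.derivation}, applies Lemma~\ref{lem:comp.curv}, and uses $\curv^\iota=0$ to remove the third term in (ii); your hypothesis checks on $g$ and $\widehat g$ (anchor preservation and $C^\infty(\Base)$-linearity) are exactly what that theorem supplies.
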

\begin{proof}
By Theorem~\ref{th:block.diag.derivation}~(\ref{th:levi.civita.to.horizontal}),
$\widetilde\nabla=g\circ\nabla\circ\gamma$; applying
Lemma~\ref{lem:comp.curv} gives \emph{(i)}.  Similarly,
$\widehat\nabla=\widehat g\circ\nabla\circ\iota$, and \emph{(ii)} follows
from the same lemma because $\iota$ is a Lie algebroid morphism and hence
$\curv^\iota=0$.
\end{proof}

Equations~\eqref{eq:lie.algebroid.oneill.base} and
\eqref{eq:lie.algebroid.oneill.fibre} are Lie algebroid analogues of
O'Neill's formulas for a Riemannian submersion; see
Theorem~\ref{th:oneill.formulas.riemannian.submersion}.  They separate
the contributions of the splitting, the Riemannian curvature of $\Aroid$,
and the projections between derivation algebroids.  The curvatures of $g$ and $\widehat g$ are
computed in the following theorem.

\begin{theorem}\label{th:curvature.maps.between.derivations}
Let $D_1,D_2\in\Der_{\so}(\Aroid)$, and write
$\Skew(L)=\frac12(L-L^\trans)$.
\begin{enumerate}
\item\label{th:g.curvature.formula}
The curvature of $g(D)=\anchor\circ D\circ\gamma$ is twice the
rotational part of the induced endomorphism
$(\anchor\circ D_1\circ\iota)\circ(\omega\circ D_2\circ\gamma)$:
\[
\curv^g(D_1,D_2)
=2\Skew\bigl((\anchor\circ D_1\circ\iota)
\circ(\omega\circ D_2\circ\gamma)\bigr).
\]
In particular, $\curv^g\equiv0$ if and only if
$\dim(\Base)\leq1$ or $\rank(\vertbundle)=0$.

\item\label{th:horisontal.derivation.curv.formula}
For $S_1,S_2\in\Gamma T\Base$, the horizontal derivations
$D_i=\nabla_{\gamma(S_i)}$ satisfy
\[
\curv^g(\nabla_{\gamma(S_1)},\nabla_{\gamma(S_2)})
=\frac12\Skew\bigl((\curv^\gamma_{S_2})^\trans
\curv^\gamma_{S_1}\bigr),
\]
where, for $i=1,2$,
\[
\curv^\gamma_{S_i}
=\omega\circ\curv^\gamma(S_i,\,\cdot)
=-\omega\circ\ad_{\gamma(S_i)}\circ\gamma
:T\Base\longrightarrow\vertbundle.
\]

\item
The curvature of $\widehat g(D)=\omega\circ D\circ\iota$ is likewise
twice the rotational part of the induced endomorphism
$(\omega\circ D_1\circ\gamma)\circ(\anchor\circ D_2\circ\iota)$:
\[
\curv^{\widehat g}(D_1,D_2)
=2\Skew\bigl((\omega\circ D_1\circ\gamma)
\circ(\anchor\circ D_2\circ\iota)\bigr).
\]
In particular, $\curv^{\widehat g}\equiv0$ if and only if
$\dim(\Base)=0$ or $\rank(\vertbundle)\leq1$.

\item
For $V_1,V_2\in\Gamma\vertbundle$, the vertical derivations
$D_i=\nabla_{\iota(V_i)}\in\Gamma\so(\Aroid)$ satisfy
\[
\curv^{\widehat g}(\nabla_{\iota(V_1)},\nabla_{\iota(V_2)})
=2\Skew\bigl((\II_{V_2})^\trans\II_{V_1}\bigr),
\]
where
\[
\II_{V_i}=\II(V_i,\,\cdot)
=\frac12\anchor\circ
\bigl(\ad^\madj_{\iota(V_i)}
+\ad^\madj_{\,\cdot}\iota(V_i)\bigr)\circ\iota
:\vertbundle\longrightarrow T\Base,
\qquad i=1,2,
\]
and $\II:\vertbundle\oplus\vertbundle\to T\Base$ is the second
fundamental form of $\iota:\vertbundle\hookrightarrow\Aroid$.
\end{enumerate}
\end{theorem}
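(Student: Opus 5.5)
The plan is to compute $\curv^g$ directly from Definition~\ref{def:curvature} using the block decomposition of Theorem~\ref{th:block.diag.derivation}. For $D_1,D_2\in\Gamma\Der_{\so}(\Aroid)$ the bracket is the commutator $D_1D_2-D_2D_1$. Insert $\Id_\Aroid=\gamma\circ\anchor+\iota\circ\omega$ between the two factors:
\[
\anchor D_1D_2\gamma=(\anchor D_1\gamma)(\anchor D_2\gamma)+(\anchor D_1\iota)(\omega D_2\gamma).
\]
Subtracting the same expression with $1\leftrightarrow2$, the first terms give exactly $[g(D_1),g(D_2)]$. Hence
\[
\curv^g(D_1,D_2)=(\anchor D_1\iota)(\omega D_2\gamma)-(\anchor D_2\iota)(\omega D_1\gamma).
\]
By Theorem~\ref{th:block.diag.derivation}~(\ref{th:off.diag.cinfty.linear}) both summands are $C^\infty(\Base)$-linear endomorphisms of $T\Base$. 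By~(\ref{th:off.diag.negative.metric.adjoint}), $(\omega D_i\gamma)^\trans=-\anchor D_i\iota$. So the second summand is $(\omega D_1\gamma)^\trans(\omega D_2\gamma)$, while the first is $-(\omega D_2\gamma)^\trans(\omega D_2\gamma)$-type. More precisely, put $L=(\anchor D_1\iota)(\omega D_2\gamma)=-(\omega D_1\gamma)^\trans(\omega D_2\gamma)$. Then $L^\trans=-(\omega D_2\gamma)^\trans(\omega D_1\gamma)=(\anchor D_2\iota)(\omega D_1\gamma)$, so $\curv^g=L-L^\trans=2\Skew(L)$.

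Part~(3) is the same computation for $\omega D_1D_2\iota$, again inserting the identity in the middle; the diagonal term gives $[\widehat g(D_1),\widehat g(D_2)]$.

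For the "in particular" statements, I will note that the off-diagonal blocks range over all of $\Hom(T\Base,\vertbundle)$. The metric derivations split locally as $T\Base\oplus\so(\Aroid)$, and $\so(\Aroid)$ contains every skew block $\begin{pmatrix}0&-A^\trans\\A&0\end{pmatrix}$. So $\curv^g\equiv0$ exactly when $A_1^\trans A_2$ is symmetric for all $A_1,A_2\in\Hom(T\Base,\vertbundle)$. Rank-one choices $A_i=v\otimes s_i^\flat$ give $A_1^\trans A_2=|v|^2 s_1\otimes s_2^\flat$. This is symmetric for all $s_1,s_2$ only if $\dim\Base\le1$ or no nonzero $v$ exists. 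Conversely, if $\dim\Base\le1$, every endomorphism of $T\Base$ is symmetric. Dually, $A_1A_2^\trans$ on $\vertbundle$ yields the condition $\rank\vertbundle\le1$ or $\dim\Base=0$.

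For~(2) and~(4), I will specialise the blocks. By Remark~\ref{rem:horizontal.to.vertical.second.fundamental}, $\omega\nabla_{\gamma(S_i)}\gamma=-\tfrac12\curv^\gamma_{S_i}$. Then $\curv^g=2\Skew\bigl(-(\omega D_1\gamma)^\trans(\omega D_2\gamma)\bigr)=-\tfrac12\Skew\bigl((\curv^\gamma_{S_1})^\trans\curv^\gamma_{S_2}\bigr)$. Since $\Skew(M^\trans)=-\Skew(M)$, this equals $\tfrac12\Skew\bigl((\curv^\gamma_{S_2})^\trans\curv^\gamma_{S_1}\bigr)$. It remains to verify $\curv^\gamma_{S}=-\omega\ad_{\gamma(S)}\gamma$ via~\eqref{eq:gamma.curvature.form.form}. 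For~(4), Remark~\ref{rem:second.fundamental.form} identifies $\anchor\nabla_{\iota(V_i)}\iota=\II_{V_i}$, using Lemma~\ref{lem:levi.civita.formula} and the fact that $\anchor\ad_{\iota V_1}\iota V_2=0$. Then $\omega D_1\gamma=-\II_{V_1}^\trans$, and $\curv^{\widehat g}=2\Skew(-\II_{V_1}^\trans\II_{V_2})=2\Skew(\II_{V_2}^\trans\II_{V_1})$.

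The main obstacle I expect is bookkeeping. I need to check that Lemma~\ref{lem:levi.civita.formula}, stated for orthogonal pairs, legitimately gives the $\II$ formula, since $\iota(V_1),\iota(V_2)$ need not be orthogonal. This requires the general Koszul form $\nabla_XY=\tfrac12(\ad_XY+\ad^\madj_XY+\ad^\madj_YX)$ minus a metric-derivative correction. I plan to redo the Koszul computation without the orthogonality assumption and confirm that the extra terms cancel. This works because $\anchor X\ip YZ$-type terms are absorbed into the definition of $\ad^\madj$. I also need to track the sign conventions in $\Skew$ throughout.
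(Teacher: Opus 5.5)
Your computations for (1)--(4) follow the paper's proof. You insert $\gamma\circ\anchor+\iota\circ\omega=\Id_{\Aroid}$ into the commutator and turn $\curv^g$ and $\curv^{\widehat g}$ into $L-L^\trans$ via Theorem~\ref{th:block.diag.derivation}~(\ref{th:off.diag.negative.metric.adjoint}). You detect non-vanishing with rank-one off-diagonal blocks, which are the paper's test elements. For (2) and (4) you substitute the blocks from Remarks~\ref{rem:horizontal.to.vertical.second.fundamental} and~\ref{rem:second.fundamental.form}. All of this is correct. The sentence just before your ``more precisely'' is garbled, but the computation with $L$ that follows is right.

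The step that fails is the check deferred to your last paragraph: the extra Koszul terms do \emph{not} cancel. Only $\anchor X\ip{Y}{Z}$ and $\anchor Y\ip{Z}{X}$ are absorbed into $\ad^\madj_XY$ and $\ad^\madj_YX$; the term $-\anchor Z\ip{X}{Y}$ is not. For arbitrary $X,Y\in\Gamma\Aroid$ one gets
\[
2\nabla_XY=\ad_XY+\ad^\madj_XY+\ad^\madj_YX-\gamma\bigl(\grad\ip{X}{Y}\bigr),
\]
with $\grad$ taken for $\ip{\cdot}{\cdot}_{T\Base}$. This correction is horizontal, so $\omega$ kills it and your use of Remark~\ref{rem:horizontal.to.vertical.second.fundamental} in (2) is unaffected. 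But $\anchor$ does not kill it, and
\[
\anchor\nabla_{\iota(V_1)}\iota(V_2)=\frac12\anchor\bigl(\ad^\madj_{\iota(V_1)}\iota(V_2)+\ad^\madj_{\iota(V_2)}\iota(V_1)\bigr)-\frac12\grad\ip{V_1}{V_2}_{\vertbundle}.
\]
For a concrete case, take the trivial algebroid $T\mbb R\oplus(\mbb R\times\mbb R)$ with abelian fibre, $E=\p_x\oplus0$, the constant vertical section $e$, and the metric with $E\perp e$, $\ip{E}{E}=1$, $\ip{e}{e}=h(x)$. All brackets of $E,e$ vanish, so $\ad^\madj_ee=0$. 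Koszul, however, gives $\nabla_ee=-\frac12h'E$, i.e.\ $\II(e,e)=-\frac12h'\p_x\neq0$ when $h'\neq0$.

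Equivalently, the $\ad^\madj$ expression is not $C^\infty(\Base)$-linear in its second slot, since $\ad^\madj_{fY}X=f\ad^\madj_YX+\ip{X}{Y}\gamma(\grad f)$. By contrast, $\II$ is $C^\infty(\Base)$-linear there, because $\anchor\circ\iota=0$.

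The remedy is straightforward. Your identity in (4) only uses $\II_{V_i}=\anchor\circ\nabla_{\iota(V_i)}\circ\iota$, so prove it with that definition. Then verify the displayed $\ad^\madj$ description only on a local orthonormal frame of $\vertbundle$, where Lemma~\ref{lem:levi.civita.formula} applies verbatim, and extend by bilinearity of $\II$. Alternatively, keep the gradient term. This frame-wise reading is also the one under which Remark~\ref{rem:second.fundamental.form}, which the paper's proof simply cites, is valid.
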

The proof is given in Section~\ref{sec:proofs.of.main.theorems}.

\subsection{O'Neill's formulas}
Combining Theorems~\ref{th:algebroid.ONeill} and
\ref{th:curvature.maps.between.derivations} yields O'Neill's formulas for
$\curv^\nabla$, $\curv^{\widehat\nabla}$, and
$\curv^{\widetilde\nabla}$.  Each successive line below evaluates the
preceding identity on one further vector; the colours identify the
corresponding terms.
\newcommand{\lcolor}{red}
\newcommand{\midcolor}{blue}
\newcommand{\rcolor}{black}
\begin{corollary} \label{cor:algebroid.Oneill.formulas}
Let $\Aroid$ be a \RSLA{}  such that
\begin{equation*}
\begin{tikzcd}
\Der_{\so}(\vertbundle) 
& \Der_{\mf{so}}(\Aroid) \ar[r, "g"] \ar[l, "\widehat{g}", swap]
& \Der_{\so}(T\Base) \\
\vertbundle \ar[r, "\iota", hook] \ar[u, "\widehat{\nabla}"] 
& \Aroid \ar[r, "\anchor", two heads] \ar[u, "\nabla"]  \ar[l, "\omega", bend left=25, pos = 0.55]
&T\Base. \ar[l, "\gamma",bend left =25, pos = 0.45] \ar[u, "\widetilde{\nabla}"]
\end{tikzcd}
\end{equation*}
Then the curvature tensors
$\curv^{\nabla}$ and $\curv^{\widetilde{\nabla}}$ of the Levi-Civita connections $\nabla$ and $ \widetilde{\nabla}$ on $\Aroid$ and $T\Base$, respectively, are for $S_i\in T\Base$ related by
\[
\curv^{\widetilde{\nabla}} = {\color{\lcolor} g\circ \gamma^* \curv^{\nabla} }   + {\color{\midcolor} (\nabla \circ \gamma )^* \curv^g } +{\color{\rcolor} g\circ \nabla \circ \curv^\gamma} ,
\]
\[
\curv^{\widetilde{\nabla}}(S_1, S_2) = 
{\color{\lcolor}\anchor\circ \curv^\nabla(\gamma(S_1), \gamma(S_2))\circ \gamma }
+ \frac{1}{4}\left( 
{\color{\midcolor}(\curv^{\gamma}_{S_2})^\trans \curv^\gamma_{S_1} - (\curv^{\gamma}_{S_1})^\trans \curv^\gamma_{S_2}  }
+ {\color{\rcolor}2 (\curv^\gamma_{\cdot})^\trans \curv^{\gamma}_{S_1}S_2 }  \right),
\]
\[
\resizebox{\textwidth}{!}{$\displaystyle
\curv^{\widetilde{\nabla}}(S_1, S_2)S_3 = 
{\color{\lcolor}\anchor \curv^\nabla(\gamma(S_1), \gamma(S_2))\gamma(S_3) }
+ \frac{1}{4}\left( 
{\color{\midcolor}(\curv^{\gamma}_{S_2})^\trans \curv^\gamma_{S_1}S_3 - (\curv^{\gamma}_{S_1})^\trans \curv^\gamma_{S_2}S_3  }
+ {\color{\rcolor}2(\curv^{\gamma}_{S_3})^\trans \curv^\gamma_{S_1}S_2} \right),
$}%
\]
\[
\resizebox{\textwidth}{!}{$\displaystyle
\ip{\curv^{\widetilde{\nabla}}(S_1, S_2)S_3}{S_4} = 
{\color{\lcolor} \ip{\anchor \curv^\nabla(\gamma(S_1), \gamma(S_2))\gamma(S_3) }{S_4} }
+ \frac{1}{4} \left( 
{\color{\midcolor}\ip{\curv^{\gamma}_{S_1}S_3}{\curv^{\gamma}_{S_2}S_4} -\ip{\curv^{\gamma}_{S_2}S_3}{\curv^{\gamma}_{S_1}S_4} }
+{\color{\rcolor} 2 \ip{\curv^{\gamma}_{S_1}S_2}{\curv^{\gamma}_{S_3}S_4} }\right).
$}%
\]

Similarly, the curvature tensors
$\curv^{\nabla}$ and $\curv^{\widehat{\nabla}}$ of the Levi-Civita connections $\nabla$ and $\widehat{\nabla}$ on $\Aroid$ and $\vertbundle$, respectively, are for $V_i\in \vertbundle$ related by
\[
\curv^{\widehat{\nabla}} = {\color{\lcolor} \widehat{g}\circ \iota^* \curv^{\nabla} }  + {\color{\midcolor} (\nabla \circ \iota )^* \curv^{\widehat{g}} } ,
\]
\[
\curv^{\widehat{\nabla}}(V_1, V_2) = 
{\color{\lcolor}\omega\circ \curv^\nabla(\iota(V_1), \iota(V_2))\circ\iota }
+{\color{\midcolor} (\II_{V_2})^\trans \II_{V_1} - (\II_{V_1})^\trans \II_{V_2} },
\]
\[
\curv^{\widehat{\nabla}}(V_1, V_2)V_3 = 
{\color{\lcolor}\omega \curv^\nabla(\iota(V_1), \iota(V_2))\iota(V_3) }
+ {\color{\midcolor}  (\II_{V_2})^\trans \II_{V_1}V_3 - (\II_{V_1})^\trans \II_{V_2}V_3 } ,
\]
\[
\ip{\curv^{\widehat{\nabla}}(V_1, V_2)V_3}{V_4} = 
{\color{\lcolor} \ip{\omega \curv^\nabla(\iota(V_1), \iota(V_2))\iota(V_3) }{V_4} }
+ {\color{\midcolor} \ip{\II_{V_1}V_3 }{\II_{V_2} V_4} - \ip{\II_{V_2}V_3 }{\II_{V_1}V_4} } .
\]
\end{corollary}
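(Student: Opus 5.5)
The plan is to start from the two factorization identities of Theorem~\ref{th:algebroid.ONeill} and evaluate each term explicitly using Theorem~\ref{th:curvature.maps.between.derivations}. The first line of each block is Theorem~\ref{th:algebroid.ONeill} with the summands reordered, so nothing needs proving there. For the horizontal identity we evaluate at $(S_1,S_2)$ and treat the three terms separately.

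The red term $g\circ\gamma^*\curv^\nabla(S_1,S_2)$ is, by definition of $g$, the composite $\anchor\circ\curv^\nabla(\gamma(S_1),\gamma(S_2))\circ\gamma$. The blue term $(\nabla\circ\gamma)^*\curv^g(S_1,S_2)=\curv^g(\nabla_{\gamma(S_1)},\nabla_{\gamma(S_2)})$ is given by Theorem~\ref{th:curvature.maps.between.derivations}(\ref{th:horisontal.derivation.curv.formula}) as $\tfrac12\Skew((\curv^\gamma_{S_2})^\trans\curv^\gamma_{S_1})$. Expanding $\Skew$ and using $((\curv^\gamma_{S_2})^\trans\curv^\gamma_{S_1})^\trans=(\curv^\gamma_{S_1})^\trans\curv^\gamma_{S_2}$ gives exactly the factor $\tfrac14$ and the antisymmetrized pair.

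The black term is $g(\nabla_{\curv^\gamma(S_1,S_2)})$. Since $\curv^\gamma$ is vertical, write $\curv^\gamma(S_1,S_2)=\iota V$ with $V=\omega\curv^\gamma(S_1,S_2)=\curv^\gamma_{S_1}S_2$. We must compute $\anchor\nabla_{\iota V}\gamma(S_3)$. By Lemma~\ref{lem:levi.civita.formula}, extended to nonorthogonal pairs via the Koszul formula, this equals $\tfrac12\anchor(\ad_{\iota V}\gamma S_3+\ad^\madj_{\iota V}\gamma S_3+\ad^\madj_{\gamma S_3}\iota V)$. Pairing with $S_4$, that is with $\gamma(S_4)$, and using Definition~\ref{def:metric.adjoint}, the terms reduce to $-\ip{\iota V}{\tfrac12\ad_{\gamma S_3}\gamma S_4}$ plus cancelling contributions. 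The cancellation comes from $\ip{\ad_{\iota V}\gamma S_3}{\gamma S_4}+\ip{\gamma S_3}{\ad^\madj_{\iota V}\gamma S_4}=\anchor\iota V\cdot(\ldots)=0$, together with the vanishing of the horizontal part of the remaining pairing.

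This should yield $\ip{g\nabla_{\iota V}S_3}{S_4}=\tfrac12\ip{V}{\curv^\gamma_{S_3}S_4}$. The sign is to be fixed against equation~\eqref{eq:gamma.curvature.form.form}, giving $\tfrac14\cdot2\ip{\curv^\gamma_{S_1}S_2}{\curv^\gamma_{S_3}S_4}$. We then read off the operator form $\tfrac12(\curv^\gamma_{\cdot})^\trans\curv^\gamma_{S_1}S_2$ by transposing. The successive lines follow by applying the operator to $S_3$ and pairing with $S_4$. Transposes are moved across the inner product, and the isometries $\anchor|_{\gamma(T\Base)}$ and $\iota$ are used.

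The vertical block is analogous and simpler. The red term is $\omega\circ\curv^\nabla(\iota V_1,\iota V_2)\circ\iota$ by definition of $\widehat g$. The blue term is Theorem~\ref{th:curvature.maps.between.derivations}(4): $2\Skew((\II_{V_2})^\trans\II_{V_1})=(\II_{V_2})^\trans\II_{V_1}-(\II_{V_1})^\trans\II_{V_2}$. Evaluating on $V_3$ and pairing with $V_4$ gives the stated four-tensor identity.

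The main obstacle is the black term. There we must correctly identify $g\circ\nabla\circ\curv^\gamma$, which is $\anchor\nabla_{\iota V}\gamma$ for a \emph{vertical} direction, with the O'Neill $A$-tensor expression, and carry the signs of $\curv^\gamma_{S}=-\omega\ad_{\gamma S}\gamma$ and of $\ad^\madj$ consistently. I would verify it first via the skew-adjointness of Theorem~\ref{th:block.diag.derivation}(\ref{th:off.diag.negative.metric.adjoint}) applied to $D=\nabla_{\iota V}$, and then double-check the sign against the classical O'Neill formula in Theorem~\ref{th:oneill.formulas.riemannian.submersion}.
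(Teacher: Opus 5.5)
Your proposal is correct and follows the paper's route: the first line of each block is Theorem~\ref{th:algebroid.ONeill}, and the red terms unwind the definitions of $g$ and $\widehat g$. The blue terms are Theorem~\ref{th:curvature.maps.between.derivations}(2) and (4), and your Koszul computation of the black term, $\ip{g(\nabla_{\iota V})S_3}{S_4}=\tfrac12\ip{V}{\curv^\gamma_{S_3}S_4}$ with $V=\curv^\gamma_{S_1}S_2$, correctly supplies the one step the paper leaves implicit, with the right sign.

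Two points in that computation need tidying. First, no extension of Lemma~\ref{lem:levi.civita.formula} to nonorthogonal pairs is needed, because $\iota V\perp\gamma S_3$ pointwise. Second, the terms $\ip{\ad_{\iota V}\gamma S_3}{\gamma S_4}$ and $\ip{\ad^\madj_{\iota V}\gamma S_3}{\gamma S_4}=-\ip{\ad_{\iota V}\gamma S_4}{\gamma S_3}$ each vanish on their own because $[\iota V,\gamma S]$ is vertical; the cancellation identity you wrote is not what makes them vanish. Alternatively, torsion-freeness gives $\anchor\nabla_{\iota V}\gamma S_3=\anchor\nabla_{\gamma S_3}\iota V=\tfrac12(\curv^\gamma_{S_3})^\trans V$ directly from the proof of Theorem~\ref{th:curvature.maps.between.derivations}(2).
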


For $S_1,S_2\in\Gamma T\Base$, the term $(\nabla\circ\gamma)^*\curv^g(S_1,S_2)$ measures how the commutator of the horizontally lifted covariant derivatives $\nabla_{\gamma(S_1)}$ and $\nabla_{\gamma(S_2)}$ changes under projection by $g$.  Theorem~\ref{th:curvature.maps.between.derivations} shows that, for the Levi-Civita connection, this contribution is determined by the curvature of the horizontal splitting $\gamma$; it therefore introduces no independent invariant in the Riemannian setting.

\section{Finite-dimensional applications}\label{sec:examples}

\subsection{Homogeneous spaces and action algebroid metrics}\label{ex:homogeneous.space}
Let $G$ be a Lie group and $H\subset G$ a closed subgroup, with Lie algebras $\mf h\subset\mf g$.  The homogeneous space $G/H$ is the base of the principal bundle $G(G/H,H,\pi)$.  Its action Lie algebroid and the Atiyah algebroid of this principal bundle occur in the respective short exact sequences
\[
\begin{tikzcd}
\Ad_{G/H}\mf h \ar[r,"\iota",hook] &
\mf g\ltimes G/H \ar[r,"\anchor",two heads] &
T(G/H),
\end{tikzcd}
\qquad
\begin{tikzcd}
{\textstyle\frac{G\times\mf h}{H}} \ar[r,"\iota",hook] &
{\textstyle\frac{TG}{H}} \ar[r,"d\pi",two heads] &
T(G/H).
\end{tikzcd}
\]
Using the left trivialization ${\textstyle\frac{TG}{H}}\simeq{\textstyle\frac{G\times\mf g}{H}}$, the Lie algebroid isomorphism
\[
{\textstyle\frac{TG}{H}}\longrightarrow\mf g\ltimes G/H,
\qquad [g,X]\longmapsto(\Ad_gX,gH),
\]
identifies these sequences.  Suppose that a reductive complement has been chosen:
\[
\mf g=\mf h\oplus\mf m,
\qquad \Ad_H\mf m\subset\mf m.
\]
For the action Lie algebroid $\mf g\ltimes G/H$, define the subbundles $\Ad_{G/H}\mf h$ and $\Ad_{G/H}\mf m$ by
\[
(\Ad_{G/H}\mf h)_{gH}=\Ad_g\mf h,
\qquad
(\Ad_{G/H}\mf m)_{gH}=\Ad_g\mf m.
\]
These are well defined by $\Ad_H$-invariance.  The first is the vertical bundle, while the second determines the reductive horizontal splitting used below.
\begin{proposition}\label{prop:reductive.lie.algebroid}
The chosen reductive complement determines a canonical right splitting
\[
\gamma:T(G/H)\longrightarrow\Ad_{G/H}\mf m\subset\mf g\ltimes G/H
\]
and an associated vector-bundle decomposition
\[
\mf g\ltimes G/H
=\Ad_{G/H}\mf h\oplus\Ad_{G/H}\mf m.
\]
If $\{V_i\}\cup\{W_a\}$ is a basis adapted to $\mf g=\mf h\oplus\mf m$ and $\sigma:U\subset G/H\to G$ is a local section, then
\[
\bar V_i(gH)=\Ad_{\sigma(gH)}V_i,
\qquad
\bar W_a(gH)=\Ad_{\sigma(gH)}W_a
\]
is a local frame adapted to the vertical-horizontal decomposition.  Its brackets satisfy
\[
[\bar V_i,\bar V_j]_{\mf g\ltimes G/H}
=\Ad_{\sigma(gH)}[V_i,V_j]_{\mf h},
\qquad
[\bar W_a,\bar V_i]_{\mf g\ltimes G/H}
=\iota\circ\omega\bigl(\anchor\bar W_a\cdot\bar V_i\bigr),
\]
\[
[\bar W_a,\bar W_b]_{\mf g\ltimes G/H}
=\Ad_{\sigma(gH)}[W_a,W_b]_{\mf g}
+\anchor\bar W_a\cdot\bar W_b
-\anchor\bar W_b\cdot\bar W_a.
\]
\end{proposition}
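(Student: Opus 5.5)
The plan is to work entirely in the action algebroid $\mf g\ltimes G/H$ of Example~\ref{ex:aroids}(\ref{ex:action.algebroids}), where the anchor at $gH$ sends $X\in\mf g$ to the fundamental vector field $(\rho_*X)(gH)=\frac{d}{dt}|_0\exp(tX)gH$.

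\emph{Step 1: the vertical bundle is $\Ad_{G/H}\mf h$.} The kernel of the anchor at $gH$ is the stabilizer subalgebra of $gH$. Since the stabilizer group is $gHg^{-1}$, its Lie algebra is $\Ad_g\mf h$. This identifies $\Ker\anchor$ with $\Ad_{G/H}\mf h$, as already asserted in the displayed sequence.

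\emph{Step 2: the splitting.} The fibre $\Ad_g\mf m$ depends only on $gH$, because $\Ad_{gh}\mf m=\Ad_g\Ad_h\mf m=\Ad_g\mf m$ for $h\in H$. Smoothness follows by writing $\Ad_{G/H}\mf m$ locally as $\Ad_{\sigma}\mf m$ for a local section $\sigma$. Since $\mf g=\Ad_g\mf h\oplus\Ad_g\mf m$ and $\Ad_g\mf h=\Ker\anchor_{gH}$, the restriction
\[
\anchor|_{\Ad_g\mf m}:\Ad_g\mf m\to T_{gH}(G/H)
\]
is a linear isomorphism; it is surjective because the action is transitive. Defining $\gamma$ as its inverse gives a right splitting with image $\Ad_{G/H}\mf m$, and the decomposition of $\mf g\ltimes G/H$ follows. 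The associated left splitting $\omega$, characterized by $\iota\circ\omega+\gamma\circ\anchor=\Id$, is the projection onto $\Ad_g\mf h$ along $\Ad_g\mf m$. The frame claim is immediate: $\Ad_{\sigma(gH)}$ is a linear isomorphism carrying the adapted basis to bases of $\Ad_\sigma\mf h$ and $\Ad_\sigma\mf m$.

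\emph{Step 3: brackets.} I apply the action-algebroid bracket
\[
[X,Y]=[X,Y]_{\mf g}+\anchor X\cdot Y-\anchor Y\cdot X,
\]
together with the fact that $\Ad_{\sigma}$ is a Lie algebra automorphism, so that $[\Ad_\sigma A,\Ad_\sigma B]_{\mf g}=\Ad_\sigma[A,B]_{\mf g}$ pointwise. This holds for the paper's convention as well, since the convention only changes an overall sign.

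For $\bar V_i,\bar V_j$ both anchors vanish, which gives the first formula. The third formula is just the bracket formula with the automorphism property inserted.

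For $[\bar W_a,\bar V_i]$ the bracket equals $\Ad_\sigma[W_a,V_i]_{\mf g}+\anchor\bar W_a\cdot\bar V_i$, since $\anchor\bar V_i=0$. I will argue in two moves:
\begin{enumerate}
\item The bracket of any section with a vertical section is vertical, because $\Ker\anchor$ is an ideal ($\anchor[X,\iota V]=[\anchor X,0]=0$). Hence the whole expression equals $\iota\circ\omega$ applied to itself.
\item Reductivity gives $[\mf m,\mf h]\subset\mf m$, by differentiating $\Ad_H\mf m\subset\mf m$. So $\Ad_\sigma[W_a,V_i]\in\Ad_\sigma\mf m$, which $\omega$ annihilates.
\end{enumerate}
Only $\iota\circ\omega(\anchor\bar W_a\cdot\bar V_i)$ survives.

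\emph{Main obstacle.} The only delicate point is exactly this mixed bracket. The derivative term $\anchor\bar W_a\cdot\bar V_i$ is not itself vertical in general, because $\sigma$ varies. Its $\mf m$-component must cancel exactly the $\mf m$-valued term $\Ad_\sigma[W_a,V_i]$. Rather than computing that cancellation by hand, I will obtain it for free from the ideal property, as in move 1 above.
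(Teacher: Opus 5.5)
Your proposal is correct and takes essentially the same route as the paper. The splitting comes from $\Ker\anchor_{gH}=\Ad_g\mf h$ together with the $\Ad_H$-invariance of $\mf m$. The mixed bracket comes from the same two facts the paper cites: the ideal property $[\Gamma(\Ad_{G/H}\mf m),\Gamma(\Ad_{G/H}\mf h)]\subset\Gamma(\Ad_{G/H}\mf h)$ and reductivity $[\mf m,\mf h]_{\mf g}\subset\mf m$. You simply make explicit some details the paper leaves implicit, such as that $\Ad_\sigma$ is an automorphism under the paper's sign convention and that $\omega$ annihilates $\Ad_\sigma\mf m$.
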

\begin{proof}
The $\Ad_H$-invariance of $\mf h$ and $\mf m$ makes both subbundles well defined.  Since the kernel of the anchor at $gH$ is $\Ad_g\mf h$, its restriction to $\Ad_g\mf m$ is an isomorphism onto $T_{gH}(G/H)$ and defines $\gamma$.  The bracket identities follow from the action Lie algebroid bracket; for the mixed identity, use that the Lie algebra bracket in $\mf g$ satisfies $[\mf m,\mf h]_{\mf g}\subset\mf m$ and that the Lie algebroid bracket of sections satisfies $[\Gamma(\Ad_{G/H}\mf m),\Gamma(\Ad_{G/H}\mf h)]_{\mf g\ltimes G/H}\subset\Gamma(\Ad_{G/H}\mf h)$.
\end{proof}

A base-independent metric on an action Lie algebroid provides a useful limiting case in which its curvature reduces to that of the acting group.
\begin{theorem}\label{th:constant.curvature}
Let $G$ act on $\Base$.  Equip the action Lie algebroid $\mf g\ltimes\Base$ with the metric induced by a right-invariant metric on $G$:
\[
\ip{X}{Y}(x)=\ip{X}{Y}_G(\Id),
\qquad X,Y\in\mf g\ltimes\set{x},\quad x\in\Base.
\]
Then, under the natural trivialization, the curvature $\curv^\nabla$ of $\mf g\ltimes\Base$ is constant over $\Base$ and equal to the Riemannian curvature $\curv^G$ of $G$.
\end{theorem}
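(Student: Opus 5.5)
Work in the natural trivialization $\mf g\ltimes\Base=\mf g\times\Base$ and use the constant sections $X\in\mf g$, viewed as maps $\Base\to\mf g$, as a global frame. Fix an orthonormal basis $\set{E_k}$ of $\mf g$ for $\ip{\cdot}{\cdot}_G(\Id)$. By hypothesis the metric on $\mf g\ltimes\Base$ is $x$-independent, so the corresponding constant sections form a global orthonormal frame. By Example~\ref{ex:aroids}(\ref{ex:action.algebroids}), the bracket of two constant sections is
\[
[X,Y]_{\mf g\ltimes\Base}=[X,Y]_{\mf g}+(\rho_*X)\cdot Y-(\rho_*Y)\cdot X=[X,Y]_{\mf g}.
\]
The derivative terms vanish because $X$ and $Y$ are constant. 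Hence the structure functions of the frame are the constants of $\mf g$.

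Next compute $\nabla$ on constant sections with the Koszul formula of Theorem~\ref{th:levi.civita.con}. For constant $X,Y,Z$ the inner products $\ip{Y}{Z}$ etc.\ are constant functions on $\Base$, so all three anchor terms $\anchor X\ip{Y}{Z}$, $\anchor Y\ip{Z}{X}$, $\anchor Z\ip{X}{Y}$ vanish. The brackets are the $\mf g$-brackets. What remains is exactly the Lie-group Koszul formula of Section~\ref{sec:Lie.groups}. Therefore $\nabla_XY$ is the constant section $\frac12(\ad_XY-\ad_X^\trans Y-\ad_Y^\trans X)$, i.e.\ $\nabla_XY=\nabla^G_XY$ under $\mf g\simeq T_{\Id}G$ with right-invariant fields. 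Equivalently, Proposition~\ref{prop:metric.adjoint.properties}(4) gives $\ad^\madj_X=-\ad_X^\trans$ on constants, and Lemma~\ref{lem:levi.civita.formula} yields the same formula.

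Then evaluate the curvature of Remark~\ref{rem:curv.levi-civita} on constant $X,Y,Z$:
\[
\curv^\nabla(X,Y)Z=\nabla_{[X,Y]}Z-\nabla_X\nabla_YZ+\nabla_Y\nabla_XZ.
\]
Since $[X,Y]$, $\nabla_YZ$ and $\nabla_XZ$ are again constant sections, every iterated derivative is computed purely algebraically in $\mf g$. This gives exactly $\curv^G(X,Y)Z$ at $\Id$ with the convention of Section~\ref{sec:Lie.groups}, which is independent of $x$.

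Finally, $\curv^\nabla$ is $C^\infty(\Base)$-trilinear, so its values on the constant frame determine it. Its components in this frame are therefore constant and equal to those of $\curv^G$. The only point requiring care is that $\nabla_X$ of a constant section has no extra term from differentiating along $\anchor X$. In the Koszul formula this holds because each occurrence of $\anchor$ differentiates a constant function, and the iterates stay within the constant sections. So there is no real obstacle; the proof is bookkeeping.
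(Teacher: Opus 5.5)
Your proposal is correct and follows the same route as the paper. Both use the constant orthonormal frame coming from $\mf g$, whose metric and bracket coefficients are those of $\mf g$, so the anchor terms in the Koszul formula drop out and the connection coefficients, and hence the curvature, coincide with those of the right-invariant metric on $G$. Your version spells out the bookkeeping the paper leaves implicit: constant sections are closed under $\nabla$ and the bracket, and $\curv^\nabla$ is determined by its values on the frame by tensoriality.
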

By the correspondence between Lie algebroid metrics and right-invariant source-fibre metrics on its corresponding Lie groupoid, the induced metric on $G\ltimes\Base$ has the same constant curvature $\curv^G$ along its source fibres.
\begin{proof}
The action Lie algebroid $\mf g\ltimes\Base$ is a trivial vector bundle; hence an orthonormal basis of $\mf g$ determines a constant global orthonormal frame.  In this frame the metric and bracket coefficients are those of $\mf g$, while their derivatives along the anchor vanish.  The Koszul formula therefore gives the same connection coefficients as the right-invariant Levi-Civita connection on $G$, and the curvature identity follows.
\end{proof}
We illustrate the use of Theorem~\ref{th:constant.curvature} in Section~\ref{ex:so3.curvatures}.

\subsection{Rotations of the round sphere}\label{ex:so3}\label{ex:so(3).reductive.decomposition}\label{ex:so3.curvatures}
This familiar example illustrates how an action Lie groupoid and its Lie
algebroid encode the geometry of a homogeneous space, and how the three terms in
Theorem~\ref{th:algebroid.ONeill} recover the curvature of the round
sphere as $1/4+1/4+1/2=1$.
The transitive
action of $\SO(3)$ on the round sphere defines the action Lie groupoid
$\SO(3)\ltimes\mbb S^2\rightrightarrows\mbb S^2$, with
\[
\src(A,x)=x,
\qquad
\trg(A,x)=Ax,
\qquad
(B,Ax)(A,x)=(BA,x).
\]
Its isotropy group at $x$ consists of the rotations about the axis through
$x$ and is isomorphic to $\SO(2)$.  A tangent vector to the source fibre at
the unit $(\Id,x)$ is identified with $(X,0)$, for $X\in\so(3)$.  The anchor
of the associated action Lie algebroid $\so(3)\ltimes\mbb S^2$ is therefore
induced by the differential of the target map:
\[
\anchor(X,x)
=d\trg_{(\Id,x)}(X,0)
=\frac{d}{dt}\Big|_{0}\exp(tX)x
=Xx.
\]
Identify $\so(3)$ with $\mbb R^3$ by assigning to $u\in\mbb R^3$ the
infinitesimal rotation $v\mapsto u\times v$.  In this notation,
\[
\anchor(u,x)=u\times x,
\]
and sections $u,v:\mbb S^2\to\mbb R^3$ carry the bracket
\[
[u,v]_{\so(3)\ltimes\mbb S^2}(x)
=-u(x)\times v(x)
+d v_x\bigl(u(x)\times x\bigr)
-d u_x\bigl(v(x)\times x\bigr).
\]
Here the minus sign comes from defining the Lie algebra bracket by
right-invariant vector fields, while the last two terms differentiate the
$\so(3)$-valued sections along their anchor fields.  The stabilizer algebras form
the vertical line bundle, giving the short exact sequence
\[
\begin{tikzcd}
\so(2)_{\mbb S^2}\ar[r,"\iota",hook]
&\so(3)\ltimes\mbb S^2\ar[r,"\anchor",two heads]
&T\mbb S^2.
\end{tikzcd}
\]

Write $x=(x_1,x_2,x_3)\in\mbb S^2$, let
$U=\{x\in\mbb S^2:x_3\neq\pm1\}$, and set
\begin{equation}\label{eq:S2.frame}
S_1=\frac{1}{\sqrt{1-x_3^2}}
\begin{pmatrix}-x_2\\x_1\\0\end{pmatrix},
\qquad
S_2=\frac{1}{\sqrt{1-x_3^2}}
\begin{pmatrix}-x_1x_3\\-x_2x_3\\1-x_3^2\end{pmatrix}.
\end{equation}
The vector fields $S_1,S_2$ form an oriented orthonormal frame for the round
metric on $\mbb S^2|_U$.

The splitting used below is the one induced by the reductive decomposition
from Proposition~\ref{prop:reductive.lie.algebroid}.
Let $e_1,e_2,e_3$ be the standard oriented basis of
$\mbb R^3\simeq\so(3)$.  At $e_1\in\mbb S^2$,
\[
\so(3)=\mf h\oplus\mf m,
\qquad
\mf h=\operatorname{span}\{e_1\},
\qquad
\mf m=\operatorname{span}\{e_3,-e_2\},
\]
is a reductive decomposition, with $\mf h$ the stabilizer algebra.  A local
section $\sigma:U\to\SO(3)$ of $A\mapsto Ae_1$ is
\[
\sigma(x)=\bigl(x\ \ S_1(x)\ \ S_2(x)\bigr)
=
\left(\begin{smallmatrix}
x_1&-\dfrac{x_2}{\sqrt{1-x_3^2}}&-\dfrac{x_1x_3}{\sqrt{1-x_3^2}}\\
x_2& \dfrac{x_1}{\sqrt{1-x_3^2}}&-\dfrac{x_2x_3}{\sqrt{1-x_3^2}}\\
x_3&0&\sqrt{1-x_3^2}
\end{smallmatrix}
\right).
\]
Since $\Ad_Au=Au$ under the cross-product identification, the adapted frame
from Proposition~\ref{prop:reductive.lie.algebroid} is
\begin{equation}\label{eq:so3.frame}
V_0(x)=\Ad_{\sigma(x)}e_1=x,\quad H_1(x)=\Ad_{\sigma(x)}e_3=S_2(x),\quad H_2(x)=\Ad_{\sigma(x)}(-e_2)=-S_1(x).
\end{equation}

The corresponding splitting is
\[
\gamma_x:T_x\mbb S^2\longrightarrow
(\so(3)\ltimes\mbb S^2)_x,
\qquad
\gamma_x(S)=x\times S.
\]
Indeed, $(x\times S)\times x=S$ for $S\in T_x\mbb S^2$, so that
$\anchor\circ\gamma=\Id$; in particular, $H_i=\gamma(S_i)$ for $i=1,2$.
The action-algebroid bracket gives
\[
[H_1,H_2]_{\so(3)\ltimes\mbb S^2}
=V_0-\frac{x_3}{\sqrt{1-x_3^2}}H_1,
\qquad
[V_0,H_i]_{\so(3)\ltimes\mbb S^2}=0,
\quad i=1,2.
\]
Since $[S_1,S_2]_{T\mbb S^2}
=-\frac{x_3}{\sqrt{1-x_3^2}}S_1$, the curvature of the splitting is
\begin{equation}\label{eq:so3.splitting.curvature}
\curv^\gamma(S_1,S_2)
=\gamma([S_1,S_2]_{T\mbb S^2})
-[\gamma(S_1),\gamma(S_2)]_{\so(3)\ltimes\mbb S^2}
=-V_0.
\end{equation}

\begin{figure}
\centering
\includegraphics[width=0.5\textwidth]{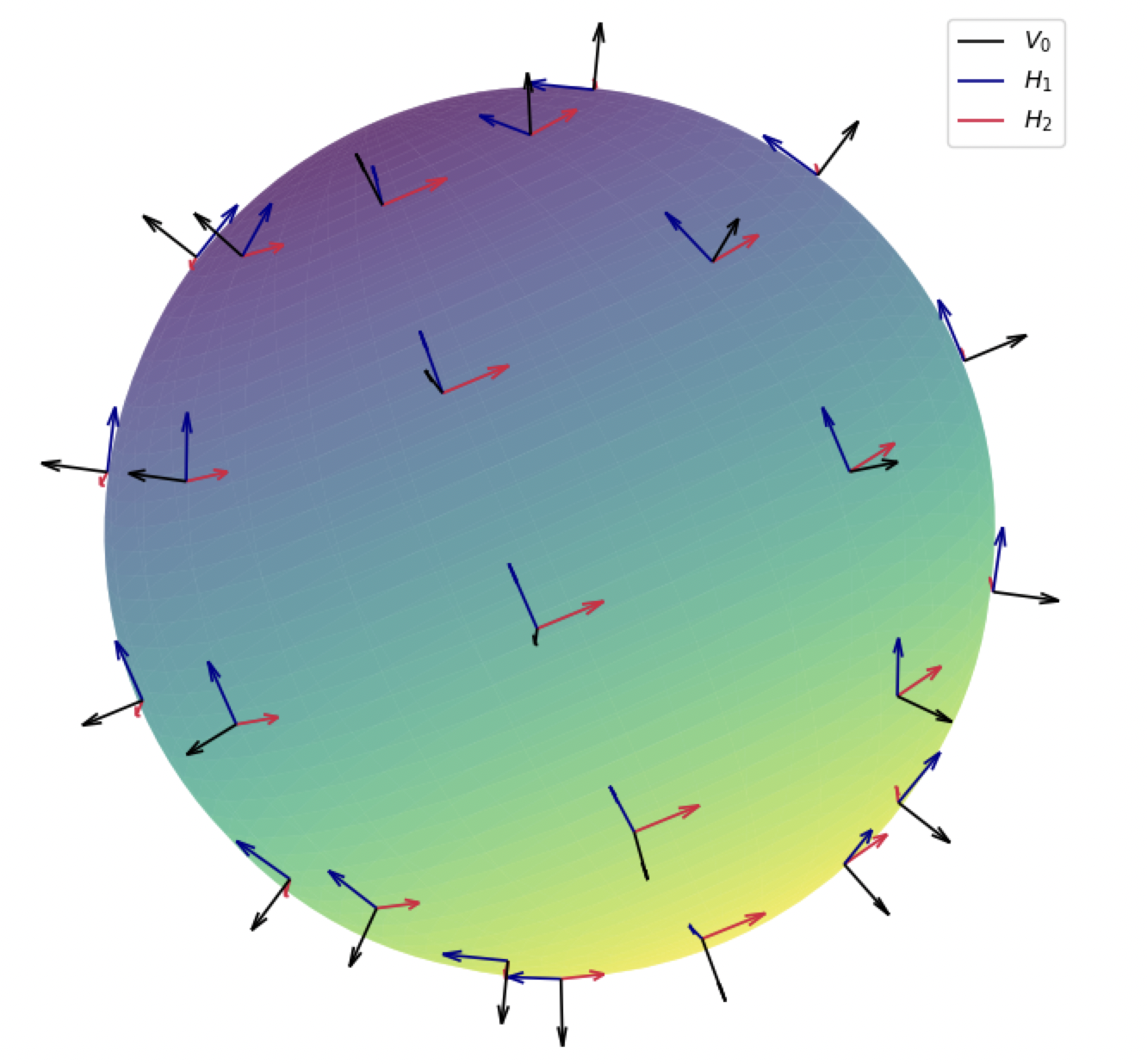}
\caption{The adapted frame of $\so(3)\ltimes\mbb S^2$.}
\label{fig:so3.action.algebroid}
\end{figure}

Under the identification $\so(3)\simeq\mbb R^3$ used above, equip
$\so(3)\ltimes\mbb S^2$ with the constant Euclidean metric
\[
\ip{u}{v}(x)=\ip{u}{v}_{\mbb R^3},
\qquad
u,v\in(\so(3)\ltimes\mbb S^2)_x,\quad x\in\mbb S^2.
\]
The adapted algebroid frame $V_0,H_1,H_2$ in~\eqref{eq:so3.frame} is then
orthonormal, and the anchor maps its horizontal part isometrically onto the
round tangent frame $S_1,S_2$ in~\eqref{eq:S2.frame}.  Hence
$\so(3)\ltimes\mbb S^2$ is a \RSLA{}.  The orthogonal splitting $\gamma$
induced by the constant Euclidean metric coincides, in this case, with the
splitting induced by the reductive decomposition above.

The Levi-Civita connections $\nabla$ and $\widetilde\nabla$, together
with the anchor-preserving map $g$ and the splitting $\gamma$, fit with
the three anchor sequences as follows:
\[
\begin{tikzcd}[column sep=large,row sep=large]
\so(2)_{\mbb S^2}
 \ar[r,"\iota",hook]
&
\so(3)\ltimes\mbb S^2
 \ar[d,"\nabla"]
 \ar[dr,"\anchor",two heads]
&
\\
\so(\so(3)\ltimes\mbb S^2)
 \ar[r,"\iota_{\Der}",hook]
&
\Der_{\so}(\so(3)\ltimes\mbb S^2)
 \ar[d,"g"]
 \ar[r,"\anchor_{\Der}",two heads]
&
T\mbb S^2
 \ar[ul,"\gamma",bend right=25,swap]
 \ar[dl,"\widetilde\nabla"',bend left=25, swap]
\\
\so(T\mbb S^2)
 \ar[r,"\iota_{\Der}",hook]
&
\Der_{\so}(T\mbb S^2)
 \ar[ur,"\anchor_{\Der}",two heads]
&
\end{tikzcd}
\]
Regard the basis elements $e_1,e_2,e_3$ introduced above as constant global
sections of $\so(3)\ltimes\mbb S^2$, and denote their metric-dual sections
by $e^1,e^2,e^3$.  Although the sections $e_i$ are constant, their anchors
are the nonconstant Killing fields
\[
\anchor e_i(x)=e_i\times x,
\qquad x\in\mbb S^2.
\]
Their brackets are
\[
[e_1,e_2]_{\so(3)\ltimes\mbb S^2}=-e_3,\qquad
[e_2,e_3]_{\so(3)\ltimes\mbb S^2}=-e_1,\qquad
[e_3,e_1]_{\so(3)\ltimes\mbb S^2}=-e_2.
\]
For the skew endomorphisms below, write
$e_i\wedge e^j=e_i\otimes e^j-e_j\otimes e^i$.
The Euclidean metric on $\mbb R^3\simeq\so(3)$ is $\Ad$-invariant, so
$\ad^\madj_{e_i}=\ad_{e_i}$.  Lemma~\ref{lem:levi.civita.formula} gives
\begin{equation}\label{eq:so3.levi.civita}
\nabla_{e_1}=\anchor e_1\oplus\frac12e_2\wedge e^3,\qquad
\nabla_{e_2}=\anchor e_2\oplus\frac12e_3\wedge e^1,\qquad
\nabla_{e_3}=\anchor e_3\oplus\frac12e_1\wedge e^2.
\end{equation}

The curvature is now obtained directly as a skew operator.  From the
brackets above and~\eqref{eq:so3.levi.civita},
\[
[\nabla_{e_1},\nabla_{e_2}]_{\Der}
=-\anchor e_3\oplus\frac14e_2\wedge e^1,
\qquad
\nabla_{-e_3}=-\anchor e_3\oplus\frac12e_2\wedge e^1.
\]
Definition~\ref{def:curvature} therefore gives
\[
\curv^\nabla(e_1,e_2)
=\nabla_{-e_3}-[\nabla_{e_1},\nabla_{e_2}]_{\Der}
=\frac14e_2\wedge e^1.
\]
By cyclic permutation,
\begin{equation}\label{eq:so3.algebroid.curvature}
\curv^\nabla(e_i,e_j)=\frac14e_j\wedge e^i,
\qquad i\neq j.
\end{equation}
Here we used that $\curv^\nabla$ takes values in
$\so(\so(3)\ltimes\mbb S^2)$ and hence may be computed directly in the
skew basis $e_i\wedge e^j$.  In particular,
$C^\nabla(e_i,e_j)=\frac14$ for $i\neq j$, consistently
with Theorem~\ref{th:constant.curvature} and the curvature of the source
fibres of $\SO(3)\ltimes\mbb S^2$.

Theorem~\ref{th:algebroid.ONeill} gives the factorization
\begin{equation}\label{eq:so3.base.curvature.factorization}
\curv^{\widetilde\nabla}
=(\nabla\circ\gamma)^*\curv^g
+g\circ\gamma^*\curv^\nabla
+g\circ\nabla\circ\curv^\gamma.
\end{equation}
As displayed above, $\widetilde\nabla=g\circ\nabla\circ\gamma$.  Let
$V^0,H^1,H^2$ be the coframe dual to $V_0,H_1,H_2$, and let $S^1,S^2$ be
the coframe dual to the frame $S_1,S_2$ of $T\mbb S^2|_U$.  By
Theorem~\ref{th:curvature.maps.between.derivations}
(\ref{th:horisontal.derivation.curv.formula}) and
\eqref{eq:so3.splitting.curvature},
\[
\curv^\gamma_{S_1}=-V_0\otimes S^2,
\qquad
\curv^\gamma_{S_2}=V_0\otimes S^1.
\]
Consequently,
\[
\bigl((\nabla\circ\gamma)^*\curv^g\bigr)(S_1,S_2)
=\frac12\Skew\bigl((\curv^\gamma_{S_2})^\trans
\curv^\gamma_{S_1}\bigr)
=\frac14S_2\wedge S^1.
\]
Equation~\eqref{eq:so3.algebroid.curvature} gives
$\curv^\nabla(H_1,H_2)=\frac14H_2\wedge H^1$, and hence
\[
\bigl(g\circ\gamma^*\curv^\nabla\bigr)(S_1,S_2)
=g\circ\curv^\nabla\bigl(\gamma(S_1),\gamma(S_2)\bigr)
=\frac14g(H_2\wedge H^1)
=\frac14S_2\wedge S^1.
\]
Finally, since $V_0=x_1e_1+x_2e_2+x_3e_3$, the
$C^\infty(\mbb S^2)$-linearity of $\nabla$ in its first argument and
$\anchor V_0=0$, together with~\eqref{eq:so3.levi.civita}, yield
\[
\nabla_{V_0}
=x_1\nabla_{e_1}+x_2\nabla_{e_2}+x_3\nabla_{e_3}
=\frac12\bigl(x_1e_2\wedge e^3+x_2e_3\wedge e^1
+x_3e_1\wedge e^2\bigr)
=-\frac12H_2\wedge H^1.
\]
Together with~\eqref{eq:so3.splitting.curvature}, this gives
\[
\bigl(g\circ\nabla\circ\curv^\gamma\bigr)(S_1,S_2)
=g(\nabla_{-V_0})
=\frac12g(H_2\wedge H^1)
=\frac12S_2\wedge S^1.
\]
Substitution in~\eqref{eq:so3.base.curvature.factorization} yields
\[
\curv^{\widetilde\nabla}(S_1,S_2)
=\left(\frac14+\frac14+\frac12\right)S_2\wedge S^1
=S_2\wedge S^1.
\]
Consequently,
\[
C^{\mbb S^2}(S_1,S_2)
=\ip{\curv^{\widetilde\nabla}(S_1,S_2)S_1}{S_2}
=1.
\]
Thus the three terms in the Lie-algebroid factorization recover the round
curvature as $1/4+1/4+1/2=1$.

\subsection{A nonconstant metric: the Earth ellipsoid}\label{sec:earth.ellipsoid}

In this example, we illustrate the preceding framework through the WGS84
ellipsoidal model of the Earth.  As in the round case, we use the rotational
action Lie algebroid $\so(3)\ltimes\mbb S^2$, now equipped with a non-constant
metric.  The computation recovers the Gaussian curvature of the ellipsoid
from the three curvature contributions in Theorem~\ref{th:algebroid.ONeill},
even though the full $\SO(3)$-action is no longer isometric.

Let $0\leq \ecc<1$ and consider the normalized oblate ellipsoid
\[
E
=\set{y=(y_1,y_2,y_3)\in\mbb R^3\mid
y_1^2+y_2^2+\frac{y_3^2}{1-\ecc^2}=1},
\]
equipped with the induced Euclidean metric $\ip{\cdot}{\cdot}_E$.  The WGS84 value is
$\ecc\simeq0.0818$~\cite{national_geospatial-intelligence_agency_department_2014}.
The map
\[
F:\mbb S^2\longrightarrow E,
\qquad
F(x_1,x_2,x_3)=(x_1,x_2,\sqrt{1-\ecc^2}\,x_3),
\]
pulls the induced metric $\ip{\cdot}{\cdot}_E$ back to
\begin{equation}\label{eq:ellipsoidal.metric}
\ip{\cdot}{\cdot}_{\mbb S^2}
=F^*\ip{\cdot}{\cdot}_E
=dx_1^2+dx_2^2+(1-\ecc^2)dx_3^2
\quad\text{on }T\mbb S^2.
\end{equation}
For $\ecc\neq0$, the full $\SO(3)$-action is no longer isometric, but the
same action algebroid $\so(3)\ltimes\mbb S^2$ may still be equipped with
a metric for which its anchor is a Riemannian submersion onto
$(\mbb S^2,\ip{\cdot}{\cdot}_{\mbb S^2})$.

Since the eccentricity of the Earth is small, the correction term $1-\ecc^2\simeq 0.9933$ seems negligible. However, at the scale of the Earth the difference between the radius at the equator and the radius at the rotational poles is 21 \si{km}. Meridian distances close to the equator are scaled by $\sqrt{1-\ecc^2} \simeq 99.7\%$, so e.g. the north-south measurement of Lake Victoria, Africa's largest lake, would be off by 1 \si{km}. 

\begin{figure}
\centering
\includegraphics[width=0.65\textwidth]{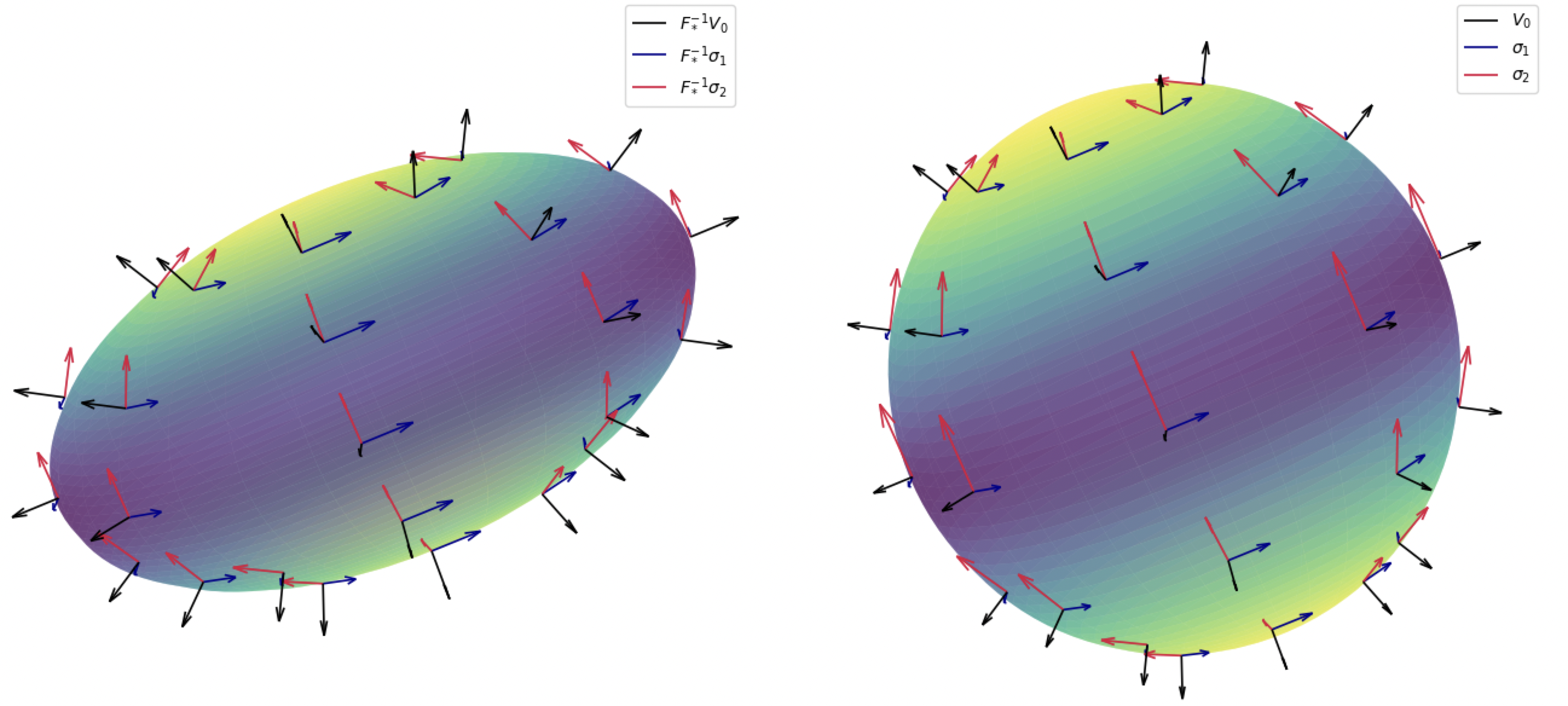}
\caption{Left: the normalized WGS84 ellipsoid $E$, with eccentricity
exaggerated.  Right: $\mbb S^2$, identified with $E$ by $F$, with the adapted
frame $\sigma_1,\sigma_2$ and vertical isotropy field $V_0$.}
\label{fig:earth.frame}
\end{figure}

On the open set $U$ of the preceding example, retain the round frame
$S_1,S_2$ and put
\[
c(x)=\frac{1}{\sqrt{1-\ecc^2(1-x_3^2)}},
\qquad
\sigma_1=S_1,\qquad \sigma_2=cS_2.
\]
Then $\sigma_1,\sigma_2$ is an orthonormal frame for
$\ip{\cdot}{\cdot}_{\mbb S^2}$.  Retain
also the reductive splitting $\gamma$, so that $H_i=\gamma(S_i)$, and set
\[
\eta_1=H_1=\gamma(\sigma_1),
\qquad
\eta_2=cH_2=\gamma(\sigma_2).
\]
We define the algebroid metric $\ip{\cdot}{\cdot}$ by declaring
$V_0,\eta_1,\eta_2$
orthonormal.  Equivalently, if $V^0,\eta^1,\eta^2$ is the dual coframe,
\[
\ip{\cdot}{\cdot}
=V^0\otimes V^0+\eta^1\otimes\eta^1+\eta^2\otimes\eta^2.
\]
Thus $(\so(3)\ltimes\mbb S^2,\ip{\cdot}{\cdot})$ is a \RSLA{},
and it determines a right-invariant source-fibre metric on
$\SO(3)\ltimes\mbb S^2$.

In the orthonormal frame $V_0,\eta_1,\eta_2$, the bracket and the
Levi--Civita connection are
\begin{equation}\label{eq:so3.earth.algebroid.bracket}
[\eta_1, \eta_2]_{\so(3) \ltimes \mbb S^2} = c\big(V_0 - {\textstyle \frac{x_3}{\sqrt{1-x_3^2}} }\eta_1 \big), \quad [V_0, \eta_1]_{\so(3) \ltimes \mbb S^2} = [V_0, \eta_2]_{\so(3) \ltimes \mbb S^2} =0.
\end{equation}
\begin{equation}\label{eq:levi.civita.so3.earth}
\nabla_{V_0} = - {\textstyle \frac{c}{2}} \eta_2\wedge \eta^1, \quad 
\nabla_{\eta_1} = \sigma_1 \oplus \Big( {\textstyle - \frac{c}{2}} \eta_2\wedge V^0 + {\textstyle \frac{cx_3}{\sqrt{1-x_3^2}} } \eta_2\wedge \eta^1 \Big), \quad
\nabla_{\eta_2} = \sigma_2 \oplus {\textstyle  \frac{c}{2}} \eta_1 \wedge V^0 .
\end{equation}
A direct computation from~\eqref{eq:levi.civita.so3.earth} gives
\begin{equation}\label{eq:earth.algebroid.curvature}
\curv^\nabla(\eta_1,\eta_2)
=c^2\left(\frac14-c^2\ecc^2x_3^2\right)\eta_2\wedge\eta^1
+\frac{c^4\ecc^2x_3\sqrt{1-x_3^2}}2\eta_2\wedge V^0.
\end{equation}
In particular, the horizontal sectional curvature of the source fibres is
$c^2(\frac14-c^2\ecc^2x_3^2)$ and is no longer constant.

We now apply Theorem~\ref{th:algebroid.ONeill}.  From
\eqref{eq:so3.splitting.curvature} and tensoriality,
\[
\curv^\gamma(\sigma_1,\sigma_2)=-cV_0,
\qquad
\curv^\gamma_{\sigma_1}=-cV_0\otimes\sigma^2,
\qquad
\curv^\gamma_{\sigma_2}=cV_0\otimes\sigma^1,
\]
where $\sigma^1,\sigma^2$ denotes the coframe dual to
$\sigma_1,\sigma_2$.  Equations~\eqref{eq:levi.civita.so3.earth} and
\eqref{eq:earth.algebroid.curvature}, together with
Theorem~\ref{th:curvature.maps.between.derivations}, give the three
contributions
\begin{align*}
\bigl(g\circ\gamma^*\curv^\nabla\bigr)(\sigma_1,\sigma_2)
&=c^2\left(\frac14-c^2\ecc^2x_3^2\right)
  \sigma_2\wedge\sigma^1,\\
\bigl((\nabla\circ\gamma)^*\curv^g\bigr)(\sigma_1,\sigma_2)
&=\frac{c^2}{4}\sigma_2\wedge\sigma^1,\\
\bigl(g\circ\nabla\circ\curv^\gamma\bigr)(\sigma_1,\sigma_2)
&=g(\nabla_{-cV_0})
=\frac{c^2}{2}\sigma_2\wedge\sigma^1.
\end{align*}
Consequently,
\[
 \curv^{\widetilde\nabla}(\sigma_1,\sigma_2)
=c^2\left(
\frac14-c^2\ecc^2x_3^2+\frac14+\frac12\right)
\sigma_2\wedge\sigma^1
=\frac{1-\ecc^2}
{\bigl(1-\ecc^2(1-x_3^2)\bigr)^2}
\sigma_2\wedge\sigma^1.
\]
Thus the sectional curvature of
$(\mbb S^2,\ip{\cdot}{\cdot}_{\mbb S^2})$ is
\[
C^{\mbb S^2}(x)
=\frac{1-\ecc^2}
{\bigl(1-\ecc^2(1-x_3^2)\bigr)^2};
\]
it has the values $(1-\ecc^2)^{-1}$ at the equator and $1-\ecc^2$ at
the poles.  This is the classical Gaussian curvature of the normalized
oblate ellipsoid.  The point here is its recovery from three curvature
contributions on the rotational action algebroid, despite the failure of
the full $\SO(3)$-action to preserve $\ip{\cdot}{\cdot}_{\mbb S^2}$.

\subsection{Non-abelian isotropy over the projective line}\label{ex:RP1}
This example illustrates the fibrewise statement of
Proposition~\ref{prop:adbundle.levi.civita} for a non-abelian isotropy
bundle, whose metric can be chosen to realise any prescribed smooth
negative sectional-curvature function on the base.
Consider the transitive action of
\[
\PSL
=\set{\left.\begin{pmatrix}a&b\\c&d\end{pmatrix}
\ \right|\ a,b,c,d\in\mbb R,\ ad-bc=1}/\set{\pm I}
\]
on
\[
\RP=(\mbb R^2\setminus\set{0})/\mathord\sim,
\qquad
\begin{pmatrix}a&b\\c&d\end{pmatrix}\cdot[x:y]
=[ax+by:cx+dy].
\]
Its Lie algebra is
\[
\spl=\set{\left.\begin{pmatrix}a&b\\c&-a\end{pmatrix}
\ \right|\ a,b,c\in\mbb R},
\qquad [X,Y]_{\spl}=-(XY-YX),
\]
and the infinitesimal action defines the transitive action algebroid
\[
\begin{tikzcd}
\bor_{\RP}\ar[r,"\iota",hook]&
\spl\ltimes\RP\ar[r,"\anchor",two heads]&T\RP.
\end{tikzcd}
\]
Indeed,
\[
\anchor
\begin{pmatrix}a&b\\c&-a\end{pmatrix}[x:y]
=(ax+by,cx-ay)\mod\mbb R(x,y),
\]
and the isotropy algebra at $[1:0]$ is the Borel algebra
\[
\bor=\set{\left.\begin{pmatrix}\alpha&\beta\\0&-\alpha\end{pmatrix}
\ \right|\ \alpha,\beta\in\mbb R}.
\]
The other isotropy algebras are its conjugates; together they form
$\bor_{\RP}$, the Lie algebroid of the stabilizer groups $B_{[x:y]}$.

Equip $\spl\ltimes\RP$ with the constant Frobenius metric
\[
\ip{X}{Y}=\tr(X^\trans Y).
\]
Its restriction to $\bor_{\RP}$ admits the global orthonormal frame
\[
V_1[x:y]={\textstyle\frac{1}{x^2+y^2}}
\begin{pmatrix}xy&-x^2\\y^2&-xy\end{pmatrix},
\qquad
V_2[x:y]={\textstyle\frac{1}{\sqrt2(x^2+y^2)}}
\begin{pmatrix}x^2-y^2&2xy\\2xy&y^2-x^2\end{pmatrix}.
\]
Since the anchor of $\bor_{\RP}$ vanishes, its bracket is fibrewise, and
\[
[V_1,V_2]_{\bor_{\RP}}=\sqrt2\,V_1,
\qquad
\widehat\nabla_{V_1}=\sqrt2\,V_1\wedge V^2,
\qquad
\widehat\nabla_{V_2}=0.
\]
Proposition~\ref{prop:adbundle.levi.civita} therefore identifies
$\widehat\nabla|_{[x:y]}$ with the Levi-Civita connection of the Borel
stabilizer $B_{[x:y]}$, and
\[
C^{B_{[x:y]}}(\widetilde V_1,\widetilde V_2)
=C^{\widehat\nabla}(V_1,V_2)=-2.
\]

More generally, let $f_1,f_2\in C^\infty(\RP,\mbb R_{>0})$ and define
\[
\ip{\cdot}{\cdot}
=f_1^{-2}V^1\otimes V^1+f_2^{-2}V^2\otimes V^2.
\]
Then $W_1=f_1V_1$ and $W_2=f_2V_2$ form an orthonormal frame.  Since
$\bor_{\RP}$ is totally intransitive,
\[
[W_1,W_2]_{\bor_{\RP}}=\sqrt2\,f_2W_1,
\qquad
C^{B_{[x:y]}}(\widetilde W_1,\widetilde W_2)
=C^{\widehat\nabla}(W_1,W_2)[x:y]
=-2\bigl(f_2[x:y]\bigr)^2.
\]
Thus the Levi-Civita connection on $\bor_{\RP}$ is precisely a smooth
family of Levi-Civita connections on the isotropy groups.  Moreover, the
sectional curvature of this family is negative, independent of $f_1$, and
can be prescribed within the negative range by a suitable choice of $f_2$.

\section{Diffeomorphisms acting on densities on the torus}\label{ex:diffeo.groupoid}
\subsection{The action algebroid and its \texorpdfstring{$L^2$}{L2} metric}
We now apply the preceding theory to the action of the diffeomorphism group
on densities.  This is the principal application of the paper: the vertical
and horizontal parts of one Riemannian submersion Lie algebroid recover,
respectively, Arnold's curvature of the group of volume-preserving
diffeomorphisms and the Wasserstein curvature of the space of densities.

We work with smooth objects in the Fr\'echet category.
The projection $(\varphi,\mu)\mapsto\varphi$ is an isometry from the
source fibre over $\mu$ to $\Difftor$ equipped with the $L^2$ metric
defined by integration against $\mu$.
The Levi-Civita connection on each source fibre is thus obtained by
transporting the connection of Ebin-Marsden~\cite{ebin_groups_1970}
through this isometry.  Let
\[
\torus=\mbb R^2/(2\pi\mbb Z)^2
\]
with its flat metric and standard density $dx^2$, normalized by
$\int_{\torus}dx^2=4\pi^2$.  Denote by $\Denstor$ the smooth positive
densities $\mu$ on $\torus$ satisfying $\int_{\torus}\mu=4\pi^2$.
The push-forward action defines the action groupoid
$\Difftor\ltimes\Denstor\rightrightarrows\Denstor$.  Its isotropy group at
$\mu$ is the group of $\mu$-preserving diffeomorphisms, the configuration
space of an incompressible fluid with mass density $\mu$.
The construction is the single-phase case of the multiphase action groupoid in
\cite{izosimov_geometry_2023}.

The corresponding action Lie algebroid fits into
\[
\begin{tikzcd}
\SVecttor_{\Denstor}\ar[r,"\iota",hook]&
\Vecttor\ltimes\Denstor\ar[r,"\anchor",two heads]&
T\Denstor,
\end{tikzcd}
\qquad
\anchor X(\mu)=-\Lie_X\mu.
\]
The fibre of $\SVecttor_{\Denstor}$ over $\mu$ is the Lie algebra of vector fields divergence-free with respect to $\mu$, 
$\SVecttor_\mu=\set{X\in\Vecttor\mid\Lie_X\mu=0}$.
Equip the action algebroid with the $L^2$ metric
\begin{equation}\label{eq:L2.metric}
\ip{X}{Y}(\mu)=\int_{\torus}X(x)\cdot Y(x)\,\mu(x).
\end{equation}
The orthogonal complement of $\SVecttor_\mu$ consists of gradient
fields; the metric induced on $T\Denstor$ is the Wasserstein metric.

For a Riemannian manifold $M$, \cite{ebin_groups_1970} showed that the
Levi-Civita connection induced on $\Diff(M)$ is right-invariant, even though
the $L^2$ metric is only right-invariant with respect to measure-preserving
diffeomorphisms.  Moreover, at the identity of $\Diff(M)$ this connection
coincides with the Levi-Civita connection on $M$.  It follows that the
Riemannian curvature of $\Diff(M)$ with the $L^2$ metric is completely
determined by the Riemannian curvature of $M$.  Since $\torus$ is flat, the
Riemannian connection
\[
\nabla:\Vecttor\ltimes\Denstor\longrightarrow
\Der_{\so}(\Vecttor\ltimes\Denstor)
\]
is flat, i.e. $\curv^\nabla=0$.  The two identities of
Theorem~\ref{th:algebroid.ONeill} therefore reduce to
\[
\curv^{\widehat\nabla}
=(\nabla\circ\iota)^*\curv^{\widehat g},
\qquad
\curv^{\widetilde\nabla}
=(\nabla\circ\gamma)^*\curv^g
+g\circ\nabla\circ\curv^\gamma.
\]

The first identity expresses the curvature of the isotropy groups through
the second fundamental form of their inclusion in the source fibres and will
recover Arnold's formula~\cite{arnold_sur_1966}.  The second expresses the
curvature of $\Denstor$ through the non-involutivity of the gradient
distribution and yields the explicit torus version of the Wasserstein
calculation in~\cite{otto_geometry_2001,lott_geometric_2007}.

\subsection{Fourier frames}
We introduce a global complex Fourier frame for
$\VecttorC\ltimes\Denstor$.  Write
\begin{equation}\label{eq:index}
\mbf k=(k,k_3)=(k_1,k_2,k_3)\in\mbb Z^2\times\mbb Z_2
\end{equation}
and, for $k\neq0$, set
\begin{equation}\label{eq:const.vector.fields}
a^{(k,0)}=\frac{1}{\norm{k}}
  \bigl(k_1\p_{x_1}+k_2\p_{x_2}\bigr),
\qquad
a^{(k,1)}=\frac{1}{\norm{k}}
  \bigl(k_2\p_{x_1}-k_1\p_{x_2}\bigr).
\end{equation}
The two zero modes are
$a^{(0,0,0)}=i\p_{x_1}$ and $a^{(0,0,1)}=i\p_{x_2}$.
Here $\norm{k}=\sqrt{k_1^2+k_2^2}$ and
$k\times l=k_1l_2-k_2l_1$.  We write
\[
a^{\mbf k}=b^{\mbf k}_1\p_{x_1}+b^{\mbf k}_2\p_{x_2},
\qquad
b^{\mbf k}=(b^{\mbf k}_1,b^{\mbf k}_2),
\]
so that $b^{\mbf k}$ is the coefficient vector of the constant vector field
$a^{\mbf k}$.  Every $X\in\VecttorC$ has a Fourier expansion
\begin{equation}\label{eq:Vecttor}
X(x)=\sum_{\mbf k\in\mbb Z^2\times\mbb Z_2}
e^{ik\cdot x}c_{\mbf k}a^{\mbf k},
\qquad c_{\mbf k}\in\mbb C.
\end{equation}
For $k\neq0$, the modes with $k_3=0$ are gradients and those with $k_3=1$
are divergence-free; the two zero modes are constant vector fields.

Following~\cite{arnold_sur_1966}, extend the $L^2$ metric complex
bilinearly.  Writing $\mu=dx^2/\rho(x)^2$, define
\begin{equation}\label{eq:Vecttor.frame}
X_{\mbf k}\left({\textstyle\frac{dx^2}{\rho(x)^2}}\right)
=\frac{\rho(x)}{2\pi i}e^{ik\cdot x}a^{\mbf k}.
\end{equation}
Since $\int_\torus e^{i(k+l)\cdot x}\,dx^2=4\pi^2\delta_{k+l,0}$,
\[
\ip{X_{\mbf k}}{X_{\mbf l}}(\mu)
=-(a^{\mbf k}\cdot a^{\mbf l})\delta_{k+l,0}
=\delta_{\mbf k+\mbf l,0}.
\]
Thus the nonzero mode $X_{\mbf k}$ is dual to $X_{-\mbf k}$, while each
of the two zero modes has norm one.  Over every $\mu\in\Denstor$, these
modes form a Fourier basis of $\VecttorC$ with the above duality.

At $\mu=dx^2$ the frame is adapted to the horizontal-vertical splitting.
For $k\in\mbb Z^2\setminus\set{0}$, set
\begin{equation}\label{eq:restricted.bases}
S_k=\anchor X_{(k,0)}(dx^2)
=-\frac{\norm{k}}{2\pi}e^{ik\cdot x}\,dx^2,
\qquad
V_k=X_{(k,1)}(dx^2)
=\frac{e^{ik\cdot x}}{2\pi i\norm{k}}
 \bigl(k_2\p_{x_1}-k_1\p_{x_2}\bigr).
\end{equation}
The $S_k$ form a complex Fourier basis of
$T_{dx^2}\Denstor\otimes\mbb C$, and the $V_k$, together with the two
constant vertical modes
\[
X_{(0,0,0)}(dx^2)=\frac{1}{2\pi}\p_{x_1},
\qquad
X_{(0,0,1)}(dx^2)=\frac{1}{2\pi}\p_{x_2},
\]
form a complex Fourier basis of $(\SVecttor_{\mbb C})_{dx^2}$.

The complex Fourier modes are convenient for computing the curvature tensor,
whereas sectional curvature is evaluated on a real orthonormal basis.  Let
\[
\mbb Z^2_+=\set{(m_1,m_2)\in\mbb Z^2
\mid m_1>0\text{, or }m_1=0\text{ and }m_2>0},
\]
so that $\mbb Z^2_+$ contains exactly one element of each pair $\set{m,-m}$
with $m\neq0$.  For $m\in\mbb Z^2_+$, set
\begin{equation}\label{eq:real.density.modes}
S_m^{\Re}=-\frac{1}{\sqrt2}(S_m+S_{-m})
=\frac{\norm{m}}{\sqrt2\pi}\cos(m\cdot x)\,dx^2,
\qquad
S_m^{\Im}=\frac{i}{\sqrt2}(S_m-S_{-m})
=\frac{\norm{m}}{\sqrt2\pi}\sin(m\cdot x)\,dx^2.
\end{equation}
Then
\[
\set{S_m^{\Re},S_m^{\Im}\mid m\in\mbb Z^2_+}
\]
is a real orthonormal basis of $T_{dx^2}\Denstor$.

\subsection{Vertical and horizontal curvature}
\begin{theorem}\label{th:vecttor.hor.vert.curv}
Let $k,l,m,n\in\mbb Z^2\setminus\set{0}$.
\begin{enumerate}
\item Set
$\curv^{\widehat\nabla}_{klmn}
=\ip{\curv^{\widehat\nabla}(V_k,V_l)V_m}{V_n}$
for vertical vectors $\set{V_k}\subset(\SVecttor_{\mbb C})_{dx^2}$.
The Riemannian curvature of the group $\SDifftor$ of measure-preserving
diffeomorphisms of the torus for $\mu=dx^2$ satisfies
\[
\curv^{\widehat\nabla}_{klmn}
=\frac{1}{4\pi^2\norm{k}\norm{l}\norm{m}\norm{n}}
\left(
\frac{(k\times m)^2(l\times n)^2}{\norm{k+m}^2}
-\frac{(l\times m)^2(k\times n)^2}{\norm{l+m}^2}
\right)
\]
for $k+l+m+n=0$.  If $k+l+m+n\neq0$, then
$\curv^{\widehat\nabla}_{klmn}=0$.

\item The horizontal distribution $\gamma(T\Denstor\otimes\mbb C)$ is not
integrable.  For $k+l\neq0$, the curvature of the horizontal lift $\gamma$
over the density $\mu=dx^2$ is given by
\[
\curv^\gamma(S_k,S_l)
=\frac{(k\times l)(k\cdot l)}
 {\pi\norm{k}\norm{l}\norm{k+l}}V_{k+l}.
\]
For the resonant pair $l=-k$, the curvature of the horizontal lift is the
constant vertical vector field
\[
\curv^\gamma(S_k,S_{-k})
=\frac{i}{2\pi^2}
\bigl(k_1\p_{x_1}+k_2\p_{x_2}\bigr).
\]

\item The sectional curvature of the space $\Denstor$ of smooth densities on
the torus endowed with the Wasserstein metric is non-negative for every
$\mu\in\Denstor$.  Moreover, over the density $\mu=dx^2$, let $k,l\in
\mbb Z^2_+$ be distinct and choose
$\widetilde S_j\in\set{S_j^{\Re},S_j^{\Im}}$ for $j=k,l$.  The sectional
curvature of the plane spanned by $\widetilde S_k$ and $\widetilde S_l$ is
\[
C^{\widetilde\nabla}(\widetilde S_k,\widetilde S_l)
=\frac{3(k\times l)^2(k\cdot l)^2}
 {8\pi^2\norm{k}^2\norm{l}^2}
\left(\frac{1}{\norm{k+l}^2}+\frac{1}{\norm{k-l}^2}\right).
\]
The two basis vectors of a fixed frequency $k\in\mbb Z^2_+$ span the plane
of sectional curvature
\[
C^{\widetilde\nabla}(S_k^{\Re},S_k^{\Im})
=\frac{3\norm{k}^2}{4\pi^2}.
\]
\end{enumerate}
\end{theorem}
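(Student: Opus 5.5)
The plan is to compute everything in the complex Fourier frame at $\mu=dx^2$, using flatness $\curv^\nabla=0$ so that Theorem~\ref{th:algebroid.ONeill} and Corollary~\ref{cor:algebroid.Oneill.formulas} reduce both curvatures to quadratic expressions in the off-diagonal blocks. The first step is to record the brackets of frame elements. At a fixed density the action-algebroid bracket of two sections evaluated at $dx^2$ equals the (negative) vector-field bracket plus derivative terms along the anchor. Since $\Vecttor\ltimes\Denstor$ is an action algebroid, the constant sections $e^{ik\cdot x}a^{\mbf k}$ have vanishing anchor derivatives. The frame \eqref{eq:Vecttor.frame} carries a factor $\rho$, but $\rho\equiv1$ at $dx^2$, and its first variation along $\anchor X$ contributes only at first order; we track this term explicitly. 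From $[e^{ik\cdot x}a,e^{il\cdot x}b]$ we then get the Fourier convolution rule: only the mode $k+l$ appears, with coefficient of the form $i(\,b\cdot k\, a - a\cdot l\, b)$, projected onto gradient and divergence-free parts by dotting with $a^{(k+l,0)}$ and $a^{(k+l,1)}$. This projection produces the factors $(k\times l)$, $(k\cdot l)$ and $1/\norm{k+l}$.

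For part (2), use $\curv^\gamma(S_k,S_l)=-\iota\omega[\gamma S_k,\gamma S_l]$ from \eqref{eq:gamma.curvature.form.form}, with $\gamma S_k=X_{(k,0)}$ at $dx^2$, since gradients are orthogonal to $\SVecttor_\mu$. Here the $\rho$-variation matters: $\gamma$ depends on $\mu$, so we write $\gamma(S)=\nabla\phi$ with $\mu$-weighted elliptic equation $\operatorname{div}(\mu\nabla\phi)=-S$. Differentiating this relation along $S_l$ gives the correction, which is itself a gradient and therefore drops out under $\omega$. Only the vertical part of the plain bracket $[\nabla\phi_k,\nabla\phi_l]$ survives. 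Evaluating it gives $(k\times l)(k\cdot l)/(\norm{k}\norm{l}\norm{k+l})$ times $V_{k+l}$, up to the normalization $1/\pi$. In the resonant case $l=-k$ the bracket lands in the zero modes, which yields the constant field. Non-integrability follows from any single pair with $(k\times l)(k\cdot l)\neq0$.

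For part (1), use Corollary~\ref{cor:algebroid.Oneill.formulas} with $\curv^\nabla=0$. This gives $\ip{\curv^{\widehat\nabla}(V_1,V_2)V_3}{V_4}=\ip{\II_{V_1}V_3}{\II_{V_2}V_4}-\ip{\II_{V_2}V_3}{\II_{V_1}V_4}$. By Remark~\ref{rem:second.fundamental.form}, $\II(V_k,V_m)$ is the gradient part of $\nabla_{V_k}V_m$; for divergence-free fields this is the gradient part of the convolution product, concentrated in mode $k+m$ with coefficient proportional to $(k\times m)^2/(\norm{k}\norm{m}\norm{k+m})$. The pairing $\ip{X_{\mbf p}}{X_{\mbf q}}=\delta_{\mbf p+\mbf q,0}$ forces $k+m=-(l+n)$, which gives the momentum condition and the stated formula after simplification. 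This reproduces Arnold's result, as Proposition~\ref{prop:adbundle.levi.civita} predicts.

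For part (3), the horizontal O'Neill identity with $\curv^\nabla=0$ gives $C(S_1,S_2)=\tfrac34\norm{\curv^\gamma(S_1,S_2)}^2$ by Corollary~\ref{cor:algebroid.Oneill.formulas}, taking $S_3=S_1$, $S_4=S_2$ orthonormal: the middle terms contribute $\tfrac14\norm{\cdot}^2$ and the last term $\tfrac12\norm{\cdot}^2$. This holds at every $\mu$, which proves nonnegativity. At $dx^2$ we expand $S^{\Re}_k,S^{\Im}_k$ in $S_{\pm k}$ and use bilinearity of $\curv^\gamma$. Four terms appear: $(k,l)$, $(k,-l)$, $(-k,l)$, $(-k,-l)$. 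These land in the distinct modes $\pm(k+l)$ and $\pm(k-l)$, which are orthogonal, and summing the squared norms gives the stated sum. For the same-frequency plane only the resonant pair survives, and it gives $3\norm{k}^2/(4\pi^2)$. The main obstacle is part (2): correctly handling the $\mu$-dependence of $\gamma$ and of the frame, the sign conventions of the bracket, and the complex-bilinear normalizations. I would verify these first on the resonant case.
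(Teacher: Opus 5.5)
Your plan follows the paper's proof. Flatness of $\nabla$ reduces Theorem~\ref{th:algebroid.ONeill} to the off-diagonal blocks. Part (1) comes from $\II_{V_k}V_m=-\frac{(k\times m)^2}{2\pi\norm{k}\norm{m}\norm{k+m}}S_{k+m}$ inserted into Corollary~\ref{cor:algebroid.Oneill.formulas}, part (2) from $\curv^\gamma=-\iota\circ\omega[\gamma(S_k),\gamma(S_l)]$, and part (3) from $C=\frac34\norm{\curv^\gamma(\widetilde S_k,\widetilde S_l)}^2$.

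The differences from the paper are minor:
\begin{itemize}
\item In (2) the paper extends $\gamma(S_k)$ by the density-independent gradient fields $\frac{e^{ik\cdot x}}{2\pi i}a^{(k,0)}$. These are horizontal at every $\mu$, so no elliptic correction arises. Your correction term is a gradient and drops out under $\omega$ anyway, so your route also works.
\item In (3) you apply the real O'Neill identity before expanding in $S_{\pm k}$. This avoids the paper's cancellation of the terms $\ip{\curv^\gamma_{S_k}S_{-k}}{\curv^\gamma_{S_l}S_{-l}}$.
\end{itemize}

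One point must be corrected, because as written it gives the wrong sign in (2). For density-independent sections the algebroid bracket is $+[X,Y]_{\Vect}$, not its negative. Hence
\[
[e^{ik\cdot x}a,e^{il\cdot x}b]=i\,e^{i(k+l)\cdot x}\bigl((a\cdot l)\,b-(b\cdot k)\,a\bigr),
\]
which is the negative of your coefficient.

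The sign is forced by \eqref{eq:anchor.morphism}. The map $\mu\mapsto\anchor X(\mu)=-\Lie_X\mu$ is a linear vector field on $\Denstor$. For linear vector fields $\mu\mapsto A\mu$ and $\mu\mapsto B\mu$, the bracket is $\mu\mapsto(BA-AB)\mu$. This gives $[\anchor X,\anchor Y]=\anchor\bigl([X,Y]_{\Vect}\bigr)$, which rules out the negative bracket. Equivalently, the right-invariant fields $\psi\mapsto X\circ\psi$ on $\Difftor$ are $\mathrm{ev}_p$-related to $X$, so their bracket is the right-invariant extension of $[X,Y]_{\Vect}$. The negative-commutator rule of Section~\ref{sec:Lie.groups} is stated for matrix groups and does not carry over to $\Difftor$.

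With your sign, both formulas in (2) come out negated, including the resonant one. Parts (1) and (3) are quadratic in $\II$ and $\curv^\gamma$, so they are unaffected.
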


Theorem~\ref{th:vecttor.hor.vert.curv} should be viewed as a refinement and
unification, using the Lie algebroid machinery developed above, of several
known curvature computations. Statement \emph{(1)} recovers Arnold's classical formula from \cite{arnold_sur_1966}, but derives it by a different method, namely through the embedding \(\SDifftor \hookrightarrow \Difftor\). This approach also appears in \cite{misiolek_stability_1993};
see also \cite{arnold_topological_2021}. Statements \emph{(2)} and \emph{(3)}
compute the curvature of the horizontal lift and the resulting Wasserstein
sectional curvatures in the Fourier basis on the flat torus, giving an
explicit torus version of the Wasserstein curvature calculation
in~\cite{otto_geometry_2001,lott_geometric_2007}.
\subsection{Proof of the curvature formulas}

\begin{lemma}\label{lem:vecttor.bracket}
At $\mu=dx^2$, the bracket of the frame elements in
$\VecttorC\ltimes\Denstor$ is
\[
[X_{\mbf k},X_{\mbf l}](dx^2)
=-\frac{1}{4\pi}\sum_{n_3\in\mbb Z_2}
\left((2l+k)\cdot b^{\mbf k}(b^{\mbf l}\cdot b^{(-k-l,n_3)})
-(2k+l)\cdot b^{\mbf l}(b^{\mbf k}\cdot b^{(-k-l,n_3)})\right)X_{(k+l,n_3)}.
\]
\end{lemma}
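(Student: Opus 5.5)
We will compute the bracket directly from the action-algebroid formula of Example~\ref{ex:aroids}~(\ref{ex:action.algebroids}). For sections $X,Y$ of $\VecttorC\ltimes\Denstor$ it reads
\[
[X,Y]=[X,Y]_{\Vecttor}+\anchor X\cdot Y-\anchor Y\cdot X.
\]
Here $[\cdot,\cdot]_{\Vecttor}$ is the Lie algebra bracket of $\Difftor$ defined through right-invariant fields. With the conventions of Section~\ref{sec:Lie.groups}, we expect it to be the \emph{negative} of the usual bracket of vector fields. We will confirm this sign on a pair of translation fields composed with a flow before proceeding, since every later sign depends on it.

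The bracket has two contributions, both evaluated at $\mu=dx^2$.

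\emph{Group part.} At $\mu=dx^2$ we have $\rho=1$, so $X_{\mbf k}=\frac{1}{2\pi i}e^{ik\cdot x}a^{\mbf k}$. The identity
\[
[f a,h b]=f(a\cdot\nabla h)b-h(b\cdot\nabla f)a
\]
for constant fields $a,b$ gives
\[
[e^{ik\cdot x}a^{\mbf k},e^{il\cdot x}a^{\mbf l}]=ie^{i(k+l)\cdot x}\bigl((l\cdot b^{\mbf k})a^{\mbf l}-(k\cdot b^{\mbf l})a^{\mbf k}\bigr).
\]

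\emph{Anchor-derivative part.} The section $X_{\mbf l}$ depends on $\mu$ only through the factor $\rho=(dx^2/\mu)^{1/2}$. Differentiating $\mu=dx^2/\rho^2$ at $\rho=1$ gives $\delta\mu=-2\,\delta\rho\,dx^2$. With $\delta\mu=\anchor X_{\mbf k}=-\Lie_{X_{\mbf k}}dx^2=-\operatorname{div}(X_{\mbf k})\,dx^2$, this yields
\[
\delta\rho=\tfrac12\operatorname{div}X_{\mbf k}=\tfrac{1}{2}\cdot\tfrac{i\,k\cdot b^{\mbf k}}{2\pi i}e^{ik\cdot x}.
\]
Hence
\[
\anchor X_{\mbf k}\cdot X_{\mbf l}=\frac{k\cdot b^{\mbf k}}{4\pi}\,e^{ik\cdot x}\,\frac{e^{il\cdot x}}{2\pi i}\,a^{\mbf l},
\]
and the term $\anchor X_{\mbf l}\cdot X_{\mbf k}$ is obtained by exchanging $\mbf k$ and $\mbf l$.

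Adding the two contributions, the bracket becomes $\frac{e^{i(k+l)\cdot x}}{2\pi i}$ times the combination
\[
c_1\,(\tfrac12k+l)\cdot b^{\mbf k}\;a^{\mbf l}\;-\;c_2\,(\tfrac12 l+k)\cdot b^{\mbf l}\;a^{\mbf k},
\]
with explicit scalar prefactors $c_1,c_2$. The factors $\tfrac12$ coming from $\delta\rho$ are what produce $(2l+k)$ and $(2k+l)$ after factoring out $\tfrac12$. The group-bracket sign and the $i/(2\pi i)$ normalisations must then combine to the overall $-\frac1{4\pi}$.

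To read off coefficients, we expand this field, which has frequency $k+l$, in the frame $\set{X_{(k+l,n_3)}}_{n_3\in\mbb Z_2}$. By the duality $\ip{X_{\mbf k}}{X_{\mbf l}}=\delta_{\mbf k+\mbf l,0}$, the coefficient of $X_{(k+l,n_3)}$ is the pairing with $X_{(-k-l,n_3)}$. Using $\int e^{i(k+l-k-l)\cdot x}dx^2=4\pi^2$ and the factor $\frac1{2\pi i}$ of the dual element, this pairing reduces to $-\,a^{\mbf l}\cdot a^{(-k-l,n_3)}$, and similarly for $a^{\mbf k}$. This turns $a^{\mbf l}$ into $b^{\mbf l}\cdot b^{(-k-l,n_3)}$ and $a^{\mbf k}$ into $b^{\mbf k}\cdot b^{(-k-l,n_3)}$, which yields the stated formula.

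The main obstacle is sign and normalisation bookkeeping: the negative group bracket, the $i$'s in $X_{\mbf k}$ and in the duality pairing, and the sign of the anchor $-\Lie_X\mu$. We will check the final constant on one explicit example, for instance two gradient modes with $k=(1,0)$ and $l=(0,1)$, computed directly. The resonant case $k+l=0$ needs separate attention, because there the frame elements of frequency $0$ are the zero modes $a^{(0,0,n_3)}=i\p_{x_{n_3+1}}$. Since $b^{(0,0,n_3)}$ is a unit vector times $i$, the same duality argument still applies, with $\ip{X_{(0,0,n_3)}}{X_{(0,0,n_3)}}=1$. We will check that the formula's convention, which writes $X_{(k+l,n_3)}$ and pairs with $b^{(-k-l,n_3)}$, gives the right normalisation there as well.
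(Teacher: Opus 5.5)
Your outline follows the paper's proof. You split the action-algebroid bracket into the pointwise bracket on $\Vecttor$ plus the anchor derivatives, obtain the latter from the variation of $\rho$, and read off coefficients by pairing with $X_{(-k-l,n_3)}$. The paper works at general $\mu=dx^2/\rho^2$ and then sets $\rho=1$; starting at $dx^2$, as you do, is fine because the group term is pointwise in $\mu$. Your $\delta\rho=\frac{k\cdot b^{\mbf k}}{4\pi}e^{ik\cdot x}$, and hence $\anchor X_{\mbf k}\cdot X_{\mbf l}=-\frac{i(k\cdot b^{\mbf k})}{8\pi^2}e^{i(k+l)\cdot x}a^{\mbf l}$, are correct.

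The gap is the one sign that decides the answer, and your stated expectation for it is wrong. With the right-invariant convention, the bracket on $\Vecttor$ is the \emph{ordinary} commutator of vector fields, not its negative; this is the opposite of the matrix-group case. The right-invariant field generated by $X$ is $\psi\mapsto X\circ\psi$. It sends the function $\psi\mapsto f(\psi(p))$ to $\psi\mapsto (Xf)(\psi(p))$, so $[\widetilde X,\widetilde Y]$ sends it to $\psi\mapsto (X(Yf)-Y(Xf))(\psi(p))$. The algebroid axiom forces the same sign. The bracket of the linear vector fields $\mu\mapsto-\Lie_X\mu$ and $\mu\mapsto-\Lie_Y\mu$ on $\Denstor$ is $\mu\mapsto(\Lie_Y\Lie_X-\Lie_X\Lie_Y)\mu=-\Lie_{[X,Y]}\mu$. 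Hence $\anchor X=-\Lie_X\mu$ preserves brackets only for the ordinary commutator. The check you propose on translation fields cannot detect this sign, since constant fields on $\torus$ commute.

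This sign is exactly what produces $2l+k$ rather than $2l-k$. With the ordinary commutator, the group term is $-\frac{i}{4\pi^2}e^{i(k+l)\cdot x}\bigl((l\cdot b^{\mbf k})a^{\mbf l}-(k\cdot b^{\mbf l})a^{\mbf k}\bigr)$. The total is then $-\frac{i}{4\pi^2}e^{i(k+l)\cdot x}\bigl(((l+\frac12k)\cdot b^{\mbf k})a^{\mbf l}-((k+\frac12l)\cdot b^{\mbf l})a^{\mbf k}\bigr)$. Pairing with $\frac{1}{2\pi i}e^{-i(k+l)\cdot x}a^{(-k-l,n_3)}$ gives the prefactor $-\frac1{2\pi}\cdot\frac12=-\frac1{4\pi}$, which is the lemma.

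With the negative commutator you announce, the group term flips sign. The total becomes $\frac{i}{4\pi^2}e^{i(k+l)\cdot x}\bigl(((l-\frac12k)\cdot b^{\mbf k})a^{\mbf l}-((k-\frac12l)\cdot b^{\mbf l})a^{\mbf k}\bigr)$. The coefficients then come out as $+\frac1{4\pi}\bigl((2l-k)\cdot b^{\mbf k}(b^{\mbf l}\cdot b^{(-k-l,n_3)})-(2k-l)\cdot b^{\mbf l}(b^{\mbf k}\cdot b^{(-k-l,n_3)})\bigr)$, which is not the stated formula.

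So your remark that the factors $\frac12$ ``produce $(2l+k)$ and $(2k+l)$'' contradicts your own sign convention. The step you leave as ``must combine to $-\frac1{4\pi}$'' is precisely the computation that has to be carried out, and it must be done with the ordinary commutator.
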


\begin{proof}[Proof of Lemma~\ref{lem:vecttor.bracket}]
Write $\mu=dx^2/\rho(x)^2$.  The commutator of the vector fields
$X_{\mbf k}(\mu)$ and $X_{\mbf l}(\mu)$ is
\[
[X_{\mbf k},X_{\mbf l}]_{\Vect}
=\frac{\rho e^{i(k+l)\cdot x}}{4\pi^2}
\left((b^{\mbf k}\times b^{\mbf l})\sgrad\rho
-i\rho\bigl((l\cdot b^{\mbf k})a^{\mbf l}
-(k\cdot b^{\mbf l})a^{\mbf k}\bigr)\right).
\]
Moreover, $\anchor X(\mu)=-\Lie_X\mu$, and hence
\[
\anchor X_{\mbf k}(\mu)
=\frac{e^{ik\cdot x}}{2\pi i\rho^2}
\left(\grad\rho\cdot a^{\mbf k}
-i\rho(k\cdot b^{\mbf k})\right)dx^2.
\]
To compute $\anchor X_{\mbf k}(\mu)\cdot X_{\mbf l}(\mu)$, consider the
curve $\lambda(t)=\mu+t\anchor X_{\mbf k}(\mu)$ in the complexified affine
space of densities of total mass $4\pi^2$.  Since
$\rho=(\mu/dx^2)^{-1/2}$, differentiation along this curve gives
\[
\anchor X_{\mbf k}(\mu)\cdot X_{\mbf l}(\mu)
=\frac{d}{dt}\Big|_0X_{\mbf l}
 \left(\mu+t\anchor X_{\mbf k}(\mu)\right)
=\frac{\rho e^{i(k+l)\cdot x}}{8\pi^2}
 \left(\grad\rho\cdot a^{\mbf k}
 -i\rho(k\cdot b^{\mbf k})\right)a^{\mbf l}.
\]
Combining these expressions in the action-algebroid bracket yields
\begin{equation*}
[X_{\mbf k},X_{\mbf l}](\mu)
=\frac{\rho e^{i(k+l)\cdot x}}{4\pi^2}\Big(
 \frac12(b^{\mbf k}\times b^{\mbf l})\sgrad\rho -i\rho\left(((l+k/2)\cdot b^{\mbf k})a^{\mbf l} -((k+l/2)\cdot b^{\mbf l})a^{\mbf k}\right)\Big).
\end{equation*}
At $\mu=dx^2$, only the frequency $k+l$ occurs.  Thus, for
$n_3\in\mbb Z_2$, the corresponding structure function is
\begingroup
\medmuskip=3mu\relax
\[
c_{\mbf k\,\mbf l}^{\,(k+l,n_3)}
=\ip{[X_{\mbf k},X_{\mbf l}](dx^2)}{X_{(-k-l,n_3)}}
=-\tfrac{1}{4\pi}\left((2l+k)\cdot b^{\mbf k}(b^{\mbf l}\cdot b^{(-k-l,n_3)})
-(2k+l)\cdot b^{\mbf l}(b^{\mbf k}\cdot b^{(-k-l,n_3)})\right).
\]
\endgroup
Summing over $n_3$ proves the lemma.
\end{proof}

\begin{proof}[Proof of Theorem~\ref{th:vecttor.hor.vert.curv}]
We first compute the second fundamental form $\II$ of the inclusion
$(\SDifftor)_{dx^2}\hookrightarrow\Difftor\ltimes\set{dx^2}$.
By Theorem~\ref{th:curvature.maps.between.derivations}\emph{(4)}, for
$V_k,V_l\in(\SVecttor_{\mbb C})_{dx^2}$,
\[
\II_{V_k}V_l
=\frac12\anchor\left(
 \ad^\madj_{V_k}V_l+\ad^\madj_{V_l}V_k\right).
\]
In terms of the structure functions of Lemma~\ref{lem:vecttor.bracket},
\[
\resizebox{\textwidth}{!}{$\displaystyle
\II_{V_k}V_l
=\frac12\sum_{n\in\mbb Z^2}
\left(
\ip{\ad^\madj_{V_k}V_l}{\gamma(S_{-n})}
+\ip{\ad^\madj_{V_l}V_k}{\gamma(S_{-n})}
\right)S_n
=-\frac12\left(
c^{\,(-l,1)}_{(k,1)(-k-l,0)}
+c^{\,(-k,1)}_{(l,1)(-k-l,0)}
\right)S_{k+l}.
$}
\]
Thus
\[
\II_{V_k}V_l
=-\frac{(k\times l)^2}
 {2\pi\norm{k}\norm{l}\norm{k+l}}S_{k+l}.
\]
Since $\nabla$ is flat, Corollary~\ref{cor:algebroid.Oneill.formulas} gives,
when $k+l+m+n=0$,
\[
\begin{aligned}
\ip{\curv^{\widehat\nabla}(V_k,V_l)V_m}{V_n}
&=\ip{\II_{V_k}V_m}{\II_{V_l}V_n}
-\ip{\II_{V_l}V_m}{\II_{V_k}V_n}\\
&=\frac{1}{4\pi^2\norm{k}\norm{l}\norm{m}\norm{n}}
\left(
\frac{(k\times m)^2(l\times n)^2}
 {\norm{k+m}\norm{l+n}}
-\frac{(l\times m)^2(k\times n)^2}
 {\norm{l+m}\norm{k+n}}
\right).
\end{aligned}
\]
Here $\norm{l+n}=\norm{k+m}$ and
$\norm{k+n}=\norm{l+m}$, which gives the formula in statement~\emph{(1)}.
If $k+l+m+n\neq0$, Fourier orthogonality makes both inner products vanish.

By equation~\eqref{eq:gamma.curvature.form.form},
\[
\curv^\gamma(S_k,S_l)
=-\iota\circ\omega[\gamma(S_k),\gamma(S_l)].
\]
Here the bracket must be computed using horizontal sections.  Although
$X_{(k,0)}(dx^2)=\gamma(S_k)$, the section $X_{(k,0)}(\mu)$ is generally
not horizontal away from $dx^2$.  We therefore extend $\gamma(S_k)$ and
$\gamma(S_l)$ by the density-independent gradient fields
\[
\frac{e^{ik\cdot x}}{2\pi i}a^{(k,0)}
\quad\text{and}\quad
\frac{e^{il\cdot x}}{2\pi i}a^{(l,0)}.
\]
Since these sections are independent of the density, their algebroid
bracket equals their vector-field commutator:
\[
\left[\frac{e^{ik\cdot x}}{2\pi i}a^{(k,0)},
      \frac{e^{il\cdot x}}{2\pi i}a^{(l,0)}\right]_{\Vect}
=-\frac{i(k\cdot l)e^{i(k+l)\cdot x}}
 {4\pi^2\norm{k}\norm{l}}
 \bigl((l_1-k_1)\p_{x_1}+(l_2-k_2)\p_{x_2}\bigr).
\]
For $k+l\neq0$, taking minus its vertical projection at $dx^2$ gives
\[
\curv^\gamma(S_k,S_l)
=\frac{k\cdot l}{2\pi\norm{k}\norm{l}}
 \bigl((l-k)\cdot b^{(-k-l,1)}\bigr)V_{k+l}
=\frac{(k\times l)(k\cdot l)}
 {\pi\norm{k}\norm{l}\norm{k+l}}V_{k+l}.
\]
For $l=-k$, the commutator is already a constant vertical vector field,
and hence
\[
\curv^\gamma(S_k,S_{-k})
=\frac{i}{2\pi^2}
 \bigl(k_1\p_{x_1}+k_2\p_{x_2}\bigr).
\]
This is nonzero for $k\neq0$, proving that the horizontal distribution is
not integrable and completing statement~\emph{(2)}.

We first apply Corollary~\ref{cor:algebroid.Oneill.formulas} to the basis
elements $\set{S_k}$ of the complexified tangent space
$T_{dx^2}\Denstor\otimes\mbb C$. Since $\nabla$ is flat,
\[
\ip{\curv^{\widetilde\nabla}(S_k,S_l)S_m}{S_n}
=\frac14\Big(
\ip{\curv^\gamma_{S_k}S_m}{\curv^\gamma_{S_l}S_n}
-\ip{\curv^\gamma_{S_l}S_m}{\curv^\gamma_{S_k}S_n} +2\ip{\curv^\gamma_{S_k}S_l}{\curv^\gamma_{S_m}S_n}\Big),
\]
which vanishes for $k+l+m+n\neq0$ by Fourier orthogonality.

For distinct $k,l\in\mbb Z^2_+$ and any pair
\[
(\widetilde S_k,\widetilde S_l)\in\set{
(S_k^{\Re},S_l^{\Re}),\,
(S_k^{\Re},S_l^{\Im}),\,
(S_k^{\Im},S_l^{\Re}),\,
(S_k^{\Im},S_l^{\Im})},
\]
expanding the real modes in equation~\eqref{eq:real.density.modes} gives
\[
\begin{aligned}
C^{\widetilde\nabla}(\widetilde S_k,\widetilde S_l)
&=\frac12\Big(
\ip{\curv^{\widetilde\nabla}(S_k,S_l)S_{-k}}{S_{-l}}
+\ip{\curv^{\widetilde\nabla}(S_k,S_{-l})S_{-k}}{S_l}\Big)\\
&=\frac38\Big(
\ip{\curv^\gamma_{S_k}S_l}{\curv^\gamma_{S_{-k}}S_{-l}}
+\ip{\curv^\gamma_{S_k}S_{-l}}{\curv^\gamma_{S_{-k}}S_l}\Big).
\end{aligned}
\]
Here the terms
$\ip{\curv^\gamma_{S_k}S_{-k}}{\curv^\gamma_{S_l}S_{-l}}$
cancel by skew-symmetry. Using statement~\emph{(2)}, we obtain
\[
C^{\widetilde\nabla}(\widetilde S_k,\widetilde S_l)
=\frac{3(k\times l)^2(k\cdot l)^2}
 {8\pi^2\norm{k}^2\norm{l}^2}
 \left(\frac{1}{\norm{k+l}^2}+\frac{1}{\norm{k-l}^2}\right).
\]
For the two real modes of one frequency, skew-symmetry instead yields
\[
\curv^\gamma(S_k^{\Re},S_k^{\Im})
=i\curv^\gamma(S_k,S_{-k})
=-\frac{1}{2\pi^2}
 \bigl(k_1\p_{x_1}+k_2\p_{x_2}\bigr).
\]
Its squared $L^2(dx^2)$ norm is $\norm{k}^2/\pi^2$, since
$\int_\torus dx^2=4\pi^2$.  Thus
\[
C^{\widetilde\nabla}(S_k^{\Re},S_k^{\Im})
=\frac{3\norm{k}^2}{4\pi^2},
\]
as asserted. More generally, the same O'Neill identity expresses the
sectional curvature on any real orthonormal plane as three quarters of the
squared norm of its horizontal curvature. This proves non-negativity at
every $\mu\in\Denstor$, finishing the proof of the theorem.
\end{proof}

\section{Proofs of the main curvature identities}\label{sec:proofs.of.main.theorems}
\subsection{The sectional curvature formula}
\begin{proof}[Proof of Theorem~\ref{th:2-1-2-3-4}]
By right-invariance, it suffices to compute
$\ip{\curv^\nabla(X,Y)X}{Y}(y)$ for local orthonormal sections $X,Y$
as in the theorem. Using the notation of the theorem, the Koszul formula
and Lemma~\ref{lem:levi.civita.formula} give
\[
\nabla_XY=\alpha+\delta,\qquad
\nabla_YX=-\alpha+\delta,\qquad
\nabla_XX=2B_X,\qquad
\nabla_YY=2B_Y.
\]
Metric compatibility, torsion-freeness, and
Definition~\ref{def:metric.adjoint} yield the three terms
\[
\begin{gathered}
\ip{\nabla_{[X,Y]}X}{Y}=\ip{\ad_XY}{\ad^\madj_XY-\nabla_XY}=-2\ip{\alpha}{\alpha}+2\ip{\alpha}{\beta},\\
-\ip{\nabla_X\nabla_YX}{Y}=-\anchor X\cdot\ip{\nabla_YX}{Y}+\ip{\nabla_YX}{\nabla_XY}=2\anchor X\cdot\ip{\alpha}{Y}+\ip{\delta}{\delta}-\ip{\alpha}{\alpha},\\
\ip{\nabla_Y\nabla_XX}{Y}=\anchor Y\cdot\ip{\nabla_XX}{Y}-\ip{\nabla_XX}{\nabla_YY}=-2\anchor Y\cdot\ip{\alpha}{X}-4\ip{B_X}{B_Y}.
\end{gathered}
\]
Combining these expressions in
\[
\ip{\curv^\nabla(X,Y)X}{Y}
=\ip{\nabla_{[X,Y]}X}{Y}
-\ip{\nabla_X\nabla_YX}{Y}
+\ip{\nabla_Y\nabla_XX}{Y}
\]
gives the stated formula on evaluating at $y$. Its independence of the
local orthonormal extensions follows from the tensoriality of
$\curv^\nabla$.
\end{proof}

\subsection{The block decomposition}
\begin{proof}[Proof of Theorem~\ref{th:block.diag.derivation}]
For $D\in\Gamma\Der_{\so}(\Aroid)$, $f\in C^\infty(\Base)$,
$S\in\Gamma T\Base$, and $V\in\Gamma\vertbundle$, the derivation rule gives
\[
\begin{aligned}
g(D)(fS) =& \anchor \Big( (\anchor_{\Der}D\cdot  f) \gamma(S) +  f D \circ  \gamma(S) \Big)
=(\anchor_{\Der}D\cdot f)S + f\,g(D)S ,\\
\widehat g(D)(fV) =& \omega \Big( (\anchor_{\Der}D\cdot  f) \iota(V) +  f D \circ  \iota(V) \Big)
=(\anchor_{\Der}D\cdot f)V + f\,\widehat g(D)V.
\end{aligned}
\]
Both diagonal blocks are therefore derivations with symbol
$\anchor_{\Der}D$.  Since $D$ is metric and $\gamma,\iota$ are
isometries onto the horizontal and vertical bundles, respectively,
\[
\begin{aligned}
\anchor_{\Der}D\cdot\ip{S_1}{S_2}_{T\Base}
&=\ip{g(D)S_1}{S_2}_{T\Base}
 +\ip{S_1}{g(D)S_2}_{T\Base},\\
\anchor_{\Der}D\cdot\ip{V_1}{V_2}_{\vertbundle}
&=\ip{\widehat g(D)V_1}{V_2}_{\vertbundle}
 +\ip{V_1}{\widehat g(D)V_2}_{\vertbundle}.
\end{aligned}
\]
Thus $g$ and $\widehat g$ are anchor-preserving maps into the respective
metric derivation algebroids.

The connection $g\circ\nabla\circ\gamma$ is metric, and its torsion
vanishes because, for $S_1,S_2\in\Gamma T\Base$,
\[
(g\circ\nabla\circ\gamma(S_1))S_2
-(g\circ\nabla\circ\gamma(S_2)) S_1
=\anchor\circ [\gamma(S_1),\gamma(S_2)]_{\Aroid}
=[S_1,S_2]_{T\Base}.
\]
It is therefore the Levi-Civita connection $\widetilde\nabla$ on
$T\Base$.  Likewise, $\widehat g\circ\nabla\circ\iota$ is metric and
\[
(\widehat g\circ\nabla\circ\iota(V_1))V_2
-(\widehat g\circ\nabla\circ\iota(V_2)) V_1
=\omega \circ [\iota(V_1),\iota(V_2)]_{\Aroid}
=[V_1,V_2]_{\vertbundle},
\]
so it is the Levi-Civita connection $\widehat\nabla$ on $\vertbundle$.
This proves (1) and (2).

For the off-diagonal blocks, the same derivation rule and the identities
$\anchor\circ\iota=0$ and $\omega\circ\gamma=0$ give
\[
(\anchor\circ D\circ\iota)(fV)
=f(\anchor\circ D\circ\iota)(V),
\qquad
(\omega\circ D\circ\gamma)(fS)
=f(\omega\circ D\circ\gamma)(S),
\]
which proves (3).  Finally, metric compatibility applied to the
orthogonal pair $\gamma(S),\iota(V)$ yields
\[
\ip{\omega\circ D\circ\gamma(S)}{V}_{\vertbundle}
=\ip{D\circ \gamma(S)}{\iota(V)}_{\Aroid}
=-\ip{\gamma(S)}{D\circ \iota(V)}_{\Aroid}
=-\ip{S}{\anchor\circ D\circ\iota(V)}_{T\Base},
\]
which proves (4).
\end{proof}

\subsection{Curvature of the projection maps}
\begin{proof}[Proof of Theorem~\ref{th:curvature.maps.between.derivations}]
\begin{enumerate}
\item The brackets in the metric derivation algebroids are commutators.
For $D_1,D_2\in\Gamma\Der_{\so}(\Aroid)$,
Definition~\ref{def:curvature}, the formula
$g(D)=\anchor\circ D\circ\gamma$, and
$\gamma\circ\anchor+\iota\circ\omega=\Id_{\Aroid}$ give
\[
\begin{gathered}
\curv^g(D_1,D_2)=g([D_1,D_2])-[g(D_1),g(D_2)]=\anchor\circ\bigl(D_1\circ\iota\circ\omega\circ D_2-D_2\circ\iota\circ\omega\circ D_1\bigr)\circ\gamma\\
=(\anchor\circ D_1\circ\iota)\circ(\omega\circ D_2\circ\gamma)-(\anchor\circ D_2\circ\iota)\circ(\omega\circ D_1\circ\gamma)=2\Skew\bigl((\anchor\circ D_1\circ\iota)\circ(\omega\circ D_2\circ\gamma)\bigr).
\end{gathered}
\]
The last equality follows from the negative-adjoint relation between the
off-diagonal blocks in
Theorem~\ref{th:block.diag.derivation}.\ref{th:off.diag.negative.metric.adjoint}.

It remains to prove the vanishing characterization in~\emph{(1)}.  If
$\dim(\Base)\leq 1$, then $\so(T\Base)=0$; if
$\rank(\vertbundle)=0$, then $\omega=0$.  In either case,
$\curv^g\equiv0$.

Conversely, suppose that $\dim(\Base)>1$ and
$\rank(\vertbundle)>0$.  At any $x\in\Base$, choose orthonormal vectors
$S_1,S_2\in T_x\Base$ and a unit vector $V_0\in\vertbundle_x$, and
denote their metric-dual covectors by $S^1,S^2$, and $V^0$.  Relative to
the vector-bundle decomposition
\[
\Aroid_x\simeq\gamma(T_x\Base)\oplus\iota(\vertbundle_x),
\]
define $\ChEnd_1,\ChEnd_2\in\so(\Aroid)_x$ by
\[
\ChEnd_i
=\gamma(S_i)\otimes(V^0\circ\omega)
-\iota(V_0)\otimes(S^i\circ\anchor),
\qquad i=1,2,
\]
viewed as operators $\Aroid_x\to\Aroid_x$.  Then
$g(\ChEnd_1)=g(\ChEnd_2)=0$, whereas
\[
\curv^g(\ChEnd_1,\ChEnd_2)=2\Skew\bigl((S_1\otimes V^0)\circ(-V_0\otimes S^2)\bigr)=S_2\otimes S^1-S_1\otimes S^2\neq0.
\]

Hence $\curv^g\equiv0$ only if $\dim(\Base)\leq1$ or
$\rank(\vertbundle)=0$.
\item 
For $S_1,S_2\in\Gamma T\Base$, put
$D_i=\nabla_{\gamma(S_i)}$, $i=1,2$.
By Remark~\ref{rem:horizontal.to.vertical.second.fundamental} and the
negative-adjoint relation in
Theorem~\ref{th:block.diag.derivation}.\ref{th:off.diag.negative.metric.adjoint},
\[
\omega\circ D_i\circ\gamma=-\frac12\curv^\gamma_{S_i}:T\Base\longrightarrow\vertbundle,\qquad\anchor\circ D_i\circ\iota=\frac12(\curv^\gamma_{S_i})^\trans:\vertbundle\longrightarrow T\Base.
\]
Substituting into \emph{(1)} gives
\[
\curv^g(\nabla_{\gamma(S_1)},\nabla_{\gamma(S_2)}) =-\frac12\Skew\bigl((\curv^\gamma_{S_1})^\trans
                       \curv^\gamma_{S_2}\bigr) =\frac12\Skew\bigl((\curv^\gamma_{S_2})^\trans
                      \curv^\gamma_{S_1}\bigr).
\]
\item The same commutator calculation, with
$\widehat g(D)=\omega\circ D\circ\iota$, gives
\[
\curv^{\widehat g}(D_1,D_2)
=(\omega\circ D_1\circ\gamma)\circ(\anchor\circ D_2\circ\iota)
 -(\omega\circ D_2\circ\gamma)\circ(\anchor\circ D_1\circ\iota) =2\Skew\bigl((\omega\circ D_1\circ\gamma)
 \circ(\anchor\circ D_2\circ\iota)\bigr).
\]
If $\rank(\vertbundle)\leq1$, then $\so(\vertbundle)=0$; if
$\dim(\Base)=0$, then $\anchor=0$. In either case,
$\curv^{\widehat g}\equiv0$.

Conversely, suppose that $\rank(\vertbundle)>1$ and $\dim(\Base)>0$.
At $x\in\Base$, choose orthonormal vectors
$V_1,V_2\in\vertbundle_x$ and a unit vector $S_0\in T_x\Base$,
with metric-dual covectors $V^1,V^2$, and $S^0$. Relative to the
vector-bundle decomposition
\[
\Aroid_x\simeq\gamma(T_x\Base)\oplus\iota(\vertbundle_x),
\]
define the skew endomorphisms
\[
\ChEnd_i
=\iota(V_i)\otimes(S^0\circ\anchor)
-\gamma(S_0)\otimes(V^i\circ\omega),
\qquad i=1,2.
\]
Then $\widehat g(\ChEnd_1)=\widehat g(\ChEnd_2)=0$, whereas
\[
\curv^{\widehat g}(\ChEnd_1,\ChEnd_2)=2\Skew\bigl((V_1\otimes S^0)\circ(-S_0\otimes V^2)\bigr)=V_2\otimes V^1-V_1\otimes V^2\neq0.
\]
Thus $\curv^{\widehat g}\equiv0$ only if
$\rank(\vertbundle)\leq1$ or $\dim(\Base)=0$.
\item For $V_1,V_2\in\Gamma\vertbundle$, put
$D_i=\nabla_{\iota(V_i)}$, $i=1,2$.
Since $\anchor\circ\iota=0$, these are sections of $\so(\Aroid)$.
By Remark~\ref{rem:second.fundamental.form} and
Theorem~\ref{th:block.diag.derivation}.\ref{th:off.diag.negative.metric.adjoint},
\[
\anchor\circ D_i\circ\iota=\II_{V_i}:\vertbundle\longrightarrow T\Base,\qquad\omega\circ D_i\circ\gamma=-(\II_{V_i})^\trans:T\Base\longrightarrow\vertbundle.
\]
Substituting into \emph{(3)} gives
\[
\curv^{\widehat g}(\nabla_{\iota(V_1)},\nabla_{\iota(V_2)})=-2\Skew\bigl((\II_{V_1})^\trans\II_{V_2}\bigr)=2\Skew\bigl((\II_{V_2})^\trans\II_{V_1}\bigr).
\]
\end{enumerate} 
\end{proof}

\section*{Acknowledgements}
I am deeply grateful to my supervisors, Irina Markina and Boris Khesin, for their generous support, encouragement, and guidance throughout this work.

\bibliography{bib.bib}

\end{document}